\newtheorem{theorem}{Theorem}
\newtheorem{corollary}{Corollary}[theorem]
\newtheorem{definition}{Definition}
\newtheorem{lemma}[theorem]{Lemma}
\newtheorem{remark}{Remark}
\newcommand{\tG}{\theta_G}
\providecommand{\keywords}[1]
{
  {\textit{Keywords---} #1}
}
\title{On the Precise Asymptotics of Universal Inference}
\author{Kenta Takatsu}
\affil{Department of Statistics and Data Science, Carnegie Mellon University}
\date{}
\DeclareMathOperator{\E}{\mathbb{E}}
\DeclareMathOperator*{\argmin}{arg\,min}
\begin{document}
\maketitle

\begin{abstract}
In statistical inference, confidence set procedures are typically evaluated based on their validity and width properties. Even when procedures achieve rate-optimal widths, confidence sets can still be excessively wide in practice due to elusive constants, leading to extreme conservativeness, where the empirical coverage probability of nominal $1-\alpha$ level confidence sets approaches one. This manuscript studies this gap between validity and conservativeness, using \emph{universal inference} \citep{wasserman2020universal} with a regular parametric model under model misspecification as a running example. We identify the source of asymptotic conservativeness and propose a general remedy based on \emph{studentization} and \emph{bias correction}. The resulting method attains exact asymptotic coverage at the nominal $1-\alpha$ level, even under model misspecification, provided that the product of the estimation errors of two unknowns is negligible, exhibiting an intriguing resemblance to double robustness in semiparametric theory.
\end{abstract}
\keywords{Universal Inference, Central Limit Theorem, Berry-Esseen Bound, Model Misspecification, Studentization, Double Robustness}
\tableofcontents
\section{Introduction}
Traditional statistical inference techniques, such as likelihood ratio tests, have seen renewed interest in recent years, driven in part by the growing emphasis on methodologies based on \emph{e-values} and \emph{e-processes}, rather than conventional \emph{p-values}. Unlike p-values, e-values possess several properties that make them particularly appealing for modern data science applications. In particular, e-value-based methods have played an instrumental role in advancing multiple and safe testing \citep{grunwald2020safe, Vovk2021evalues, Shafer2021Testing, wang2022false}, anytime-valid inference \citep{waudby2024estimating}, and asymptotic confidence sequences \citep{waudby2024time}. This list is far from exhaustive, and we refer to \citet{Ramdas2023Test} for a broader overview of recent developments.

This manuscript revisits the work of \citet{wasserman2020universal}, who introduced \emph{universal inference}, a general hypothesis testing framework based on \emph{split likelihood ratio statistics}, which is also an e-value. This framework provides simple procedures for many complex composite testing problems that previously lacked actionable solutions, such as testing log-concavity \citep{dunn2024universal} and causal inference under unknown causal structures \citep{strieder2021confidence}, among others. Specifically, universal inference combines the classical idea of sample splitting \citep{Cox1975Note} and Markov's inequality to establish finite-sample validity. The procedure follows three steps. Suppose independent and identically distributed (IID) observations $Z_1, \ldots, Z_n$ follow the distribution $P_{\theta_0}$, belonging to a collection of distributions $\{P_{\theta} \, :\, \theta \in \Theta\}$, for an arbitrary set $\Theta$. The procedure is as follows: (1) Split the index set $\{1, 2, \ldots, n\}$ into two non-overlapping sets $D_1$ and $D_2$; (2) Construct an arbitrary initial estimator $\widehat\theta_1$ using the first data $\{Z_i\, : \, i \in D_1\}$ and (3) Define the confidence set as
 \begin{align}\label{eq:ui-def}
     \mathrm{CI}^{\mathrm{UI}}_{n, \alpha} = \left\{\theta \in \Theta \, : \, \sum_{i \in D_2}\,\log \frac{p_{\widehat\theta_1}(Z_i)}{p_{\theta}(Z_i)} \le \log(1/\alpha) \right\}.
 \end{align}
The confidence set constructed based on universal inference is henceforth referred to as a \emph{universal confidence set}\footnote{We retain this term for simplicity, despite its unfortunate naming collision with another universal confidence set proposed by \citet{vogel2008universal}.}. A striking feature of this method is that it imposes almost no regularity conditions on the data-generating distribution, which are typically required to ensure validity in large-sample theory. Perhaps unsurprisingly, such robustness often comes at the cost of conservativeness. Shortly after the publication of \citet{wasserman2020universal}, several studies \citep{tse2022note, strieder2022choice, dunn2023gaussian} highlighted the conservative nature of this approach.

Interestingly, universal inference can be viewed as unimprovable in a certain sense. First, its validity relies on Markov’s inequality, which is tight for some distributions with two-point supports. Second, the diameter of the universal confidence set, under additional regularity conditions, can be shown to converge at an optimal rate (see Theorem 6 of \citet{wasserman2020universal} and Remark~\ref{remark:power-ui}). Despite these seemingly favorable properties, \citet{tse2022note} and many others report that the empirical coverage probability of a $1-\alpha$ level universal confidence set can be close to one, rendering the method overly conservative in practice. 

In addition to conservativeness, another fundamental issue arises when the likelihood is misspecified. The universal confidence set \eqref{eq:ui-def} is constructed under the assumption that the data-generating distribution belongs to the statistical model at hand. When this assumption is violated, one might hope that the universal confidence set remains valid for the best approximation within the working model. Unfortunately, that is not the case. \citet{park2023robust} and \citet{dey2024anytime} examined this aspect, showing that regaining validity under misspecification requires either strong additional regularity conditions or methodological adjustments.

One of the goals of this manuscript is to formally examine this gap between validity and extreme conservativeness in universal inference under (misspecified) regular models. At this point, a reader may wonder why we analyze universal inference under regularity conditions when its appeal lies in avoiding such assumptions for validity. In line with \citet{strieder2022choice, dunn2023gaussian}, we impose these conditions to develop a more precise understanding of its properties. In doing so, we suggest a broader story regarding the trade-off between regularity and conservativeness, even when the resulting method remains valid and rate-optimal under weak assumptions. The analysis in the manuscript suggests that, without modifications leveraging additional problem structures, conservativeness can be often extreme. Since such modifications have received little attention, extreme conservativeness has been a pervasive limitation among universal inference and its derivatives.

The two aforementioned issues of over-conservativeness and model misspecification can be addressed by slight changes in perspective. The conservativeness of universal inference might be mitigated if one is willing to give up finite-sample validity in favor of asymptotic guarantees, while model misspecification can be tackled by framing the problem as an inference for the solution to an optimization problem. Indeed, sample splitting has been explored for asymptotic inference for high-dimensional or irregular problems \citep{park2023robust, kim2020dimension, takatsu2025bridging}. Similar to universal inference, these methods rely on sample splitting but differ crucially in their theoretical justification; Instead of invoking Markov's inequality, they use the central limit theorem (CLT) to establish asymptotic validity. The required regularity conditions are relatively mild, roughly aligning with those necessary for the CLT for Student's t-statistics \citep[Corollary 1.5]{Bentkus1996}. By leveraging the CLT, it may be conceivable that these methods are more powerful and (asymptotically) less conservative than universal inference or other e-value-based methods, as they can more precisely estimate the quantile of the test statistics. 

On the other hand, inference for the solution to optimization problems has a long history that well precedes universal inference. In the context of maximum likelihood estimation (MLE) under misspecification, asymptotic inference can be performed using the sandwich variance estimator \citep{Huber1967}. Although often overlooked in the statistical inference literature, inference for more general optimization problems has been extensively studied in operations research, with key references including \citet{vogel2008confidence, vogel2008universal, vogel2017confidence, vogel2019universal}. More recently, \citet{takatsu2025bridging} extended the sample splitting approach to the general framework presented by \citet{vogel2008universal}, further relaxing regularity conditions and demonstrating validity for a broader class of distributions. Furthermore, the width of their confidence set exhibits an adaptive property, automatically converging at a faster rate depending on the local regularity of the optimization problem near the solution. 

% This discussion raises some questions: Can we precisely identify the source of conservativeness in these procedures? If so, can we effectively correct this ``bias" to construct an optimal confidence set without making the method extremely conservative? The answers to these questions are affirmative under some regularity conditions, and they will be the focus of the remainder of this manuscript.

Some contributions of the manuscript are as follows: (1) We provide new theoretical results on universal inference under model misspecification, making a formal statement about conditions under which the method becomes conservative. The analysis is performed under standard regularity conditions. While these conditions might be sufficient for the asymptotic normality of the MLE under model misspecification for fixed parameter dimensions, they may not suffice for Wald inference when the dimension is large; (2) For the variant of universal inference known as generalized universal inference \citep{dey2024anytime}, we present results showing that coverage validity does not require the \emph{strong central condition}. The possibility of weaker assumptions was already alluded to in \citet{dey2024anytime} and \citet{takatsu2025bridging}; (3) We present results indicating that asymptotic conservativeness is far more pervasive across universal inference and its derivatives, such as those proposed by \citet{nguyen2020universal}, as well as e-value-based procedures. Without any corrections, the empirical coverage probability at any fixed confidence level $1-\alpha$ can asymptotically converge to one, a phenomenon we term \emph{extreme conservativeness}. While the conservative property was mentioned in \citet{wasserman2020universal} and \citet{park2023robust}, we present a formal explanation of this phenomenon and complete the story; 
(4) To mitigate conservative coverage, we develop a method based on \emph{studentization} and \emph{bias correction}. An interesting second-order bias condition emerges, akin to those found in the double robustness literature \citep{bickel1982adaptive, pfanzagl1985contributions, klaassen1987consistent, robinson1988root, bickel1993efficient, laan2003unified}. The proposals found in this manuscript are fundamentally different from previously considered power-enhancement methods, such as cross-fitting \citep{newey2018cross}, randomization \citep{ramdas2023randomized}, or majority voting \citep{gasparin2024merging}, which do not address the problem at its core.

\section{Definitions and Main Claims}
Let $\mathcal{P}$ denote a set of probability distributions over some measurable space. Given observations $Z_1, \ldots, Z_n$, each
distributed according to a probability measure $P \in \mathcal{P}$, the objective is to perform inference on the evaluation of a functional $\theta : \mathcal{P} \mapsto \mathbb{B}$ at $P$, where $\mathbb{B}$ is most often a subset of $\mathbb{R}^d$. A confidence set procedure is a sequence of (random) sets in $\mathbb{B}$, denoted by $\{\mathrm{CI}_{n, \alpha} \, :\, n \ge 1\}$, computed from $n$ observations. Below, we define key properties of confidence set procedures that are often considered in the literature.
\begin{definition}\label{def:uniformly-valid}
A sequence of confidence sets $\{\mathrm{CI}_{n, \alpha} \, :\, n \ge 1\}$ for a functional $\theta(P)$ is (asymptotically) uniformly valid at fixed level $\alpha \in [0,1]$ for a class of distributions $\mathcal{P}$ if
    \begin{align}\label{eq:uniformly-valid}
        \limsup_{n\to \infty}\, \sup_{P \in \mathcal{P}}\, \bigg(\mathbb{P}_P(\theta(P) \not\in \mathrm{CI}_{n, \alpha}) - \alpha\bigg)_+ = 0.
    \end{align}
\end{definition}

\begin{definition}\label{def:uniformly-exact}
A sequence of confidence sets $\{\mathrm{CI}_{n, \alpha} \, :\, n \ge 1\}$ for a functional $\theta(P)$ is (asymptotically) uniformly exact at fixed level $\alpha \in [0,1]$ for a class of distributions $\mathcal{P}$ if
    \begin{align}\label{eq:uniformly-exact}
        \limsup_{n\to \infty}\, \sup_{P \in \mathcal{P}}\, \big|\, \mathbb{P}_P(\theta(P) \not\in \mathrm{CI}_{n, \alpha}) - \alpha \, \big| = 0.
    \end{align}
\end{definition}
The difference between these definitions is crucial. Uniform validity (Definition~\ref{def:uniformly-valid}) only considers the positive part of the miscoverage discrepancy, and does not systematically regulate over-conservativeness. On the other hand, uniform exactness (Definition~\ref{def:uniformly-exact}) ensures the confidence set asymptotically attains the exact nominal coverage level, preventing both under-coverage and over-coverage.

Definition~\ref{def:uniformly-valid}, commonly referred to as honest inference \citep{Li1989Honest, Ptscher2002Lower}, has been extensively studied. \citet[Definition 5]{Kuchibhotla2023Median} defines a stricter, and hence more robust, validity condition where the supremum is also taken over all $\alpha \in [\alpha_n, 1]$ for an appropriately chosen sequence $\alpha_n \to 0$. In contrast, we consider a weaker notion of validity where $\alpha$ is assumed to be fixed and independent of the data. 

Perhaps less studied is the following notion of \emph{extreme conservativeness}:
\begin{definition}\label{def:uniformly-conservative}
A sequence of confidence sets $\{\mathrm{CI}_{n, \alpha} \, :\, n \ge 1\}$ for a functional $\theta(P)$ is (asymptotically) uniformly extremely conservative at fixed level $\alpha \in [0,1]$ for a class of distributions $\mathcal{P}$ if
    \begin{align}\label{eq:uniformly-conservative}
        \limsup_{n\to \infty}\, \sup_{P \in \mathcal{P}}\, \mathbb{P}_P(\theta(P) \not\in \mathrm{CI}_{n, \alpha}) = 0.
    \end{align}
\end{definition}
As the remainder of this manuscript often focuses on the large-sample properties of confidence set procedures, we occasionally omit the term ``asymptotically" from the definitions. Extreme conservativeness is generally undesirable, particularly when $\mathcal{P}$ is considerably large. However, since extremely conservative confidence sets are valid by definition, this issue often goes unnoticed, as validity can sometimes be trivially established using tools such as Markov’s inequality. In fact, we show that extreme conservativeness is largely unavoidable unless additional assumptions or modifications are imposed.
\paragraph{Main claims}
We demonstrate that the nonasymptotic miscoverage probability of the confidence set under investigation can be expressed as follows:
\begin{align}
        |\,\mathbb{P}_P(\theta(P) \not\in \mathrm{CI}_{n, \alpha}) - \big(1-\Phi(z_\alpha + r_{n,P})\big) \,| \le  \Delta_{n, P},
\end{align}
where $\Phi(\cdot)$ is the cumulative distribution function of the standard normal random variable, and $z_\alpha$ is the $1-\alpha$ quantile of the standard normal distribution. There are two remainder terms: $r_{n,P}$ and $\Delta_{n,P}$. The second term $\Delta_{n, P}$ measures the accuracy of the Gaussian approximation, which becomes negligible when $\Delta_{n, P} \to 0$. Provided that $\Delta_{n, P} \to 0$, the term $r_{n,P}$ quantifies (anti-)conservativeness, where $r_{n,P} \to 0$ corresponds to exact coverage, while $r_{n,P} \to \infty$ corresponds to extreme conservativeness, i.e., asymptotic miscoverage probability approaches zero. Whenever $r_{n,P}$ is negative, the method becomes asymptotically invalid. With the proposed modifications based on studentization and bias correction, we show that $r_{n,P} \to 0$ is achieved as long as the product of the estimation errors of two unknowns (to be defined precisely) is negligible. This condition inherently ties to the dimension of  $\theta(P)$, suggesting that avoiding extreme conservativeness is likely a dimension-dependent property, without additional assumptions to the problem structures. 

\paragraph{Scope of this manuscript}
The remainder of this manuscript uses the confidence set for the population MLE under model misspecification as a running example. We begin by revisiting universal inference and identifying the source of conservativeness. We then propose a new procedure, building on the framework of \citet{takatsu2025bridging}, which is asymptotically valid, non-conservative, and exhibits optimal convergence rates. While recent studies \citep{park2023robust, dey2024anytime} have examined universal inference under model misspecification, this manuscript focuses on the precise quantification of conservativeness across a broader class of distributions as well as its remedy, giving rise to a new method. We hope this contribution will prove to be non-trivial.

We propose a general remedy to universal inference through studentization and bias correction. Although our primary focus is on applying this methodological modification to the MLE under model misspecification, we believe this approach can be broadly applied to other inferential methods that rely on e-values. While we do not fully pursue this direction, we hope that this general construction will contribute to addressing the empirically observed conservativeness in e-value-based procedures.

\paragraph{Notation}
We adopt the following convention. For $x \in \mathbb{R}^d$, we write $\|x\| = \sqrt{x^\top x}$. In particular, we define the unit sphere with respect to $\|\cdot\|$ such that $\mathbb{S}^{d-1} = \{u \in \mathbb{R}^d \, : \, \|u\| =1\}$. Given a square matrix $A \in \mathbb{R}^{d\times d}$, the smallest and the largest eigenvalues are denoted by $\lambda_{\min}(A)$ and $\lambda_{\max}(A)$ respectively. Additionally, for a positive semi-definite matrix $A \in \mathbb{R}^{d\times d}$ we denote the scaled norm as $\|x\|_A^2 = x^\top Ax$. We also denote by $\|\cdot\|_{\mathrm{op}}$ the operator norm of the matrices. 
% For any deterministic sequences $\{x_n\}_{n \ge 1}$ and $\{r_n\}_{n \ge 1}$, we denote $x_n = O(r_n)$ if there exists a universal constant $C>0$ such that $|x_n| \le C|r_n|$ for all $n$ larger than some $N$. Similarly, we denote $x_n = O_P(r_n)$ if, for any $\varepsilon>0$, there exists a constant $C_\varepsilon>0$ such that $\mathbb{P}(|x_n| \le C_\varepsilon|r_n|) \le \varepsilon$ for all $n$ larger than some $N_\varepsilon$.
% We denote $x_n = o(r_n)$ if $x_n/r_n \to 0$ and $x_n = o_p(r_n)$ if $x_n/r_n \overset{p}{\to} 0$ where $\overset{p}{\to}$ denotes convergence in probability. 
For a generic distribution $P$, we denote probability, expectation, and variance under $P$ by $\mathbb{P}_P(\cdot)$, $\E_P[\cdot]$, and $\mathrm{Var}_P[\cdot]$, respectively.

\section{Maximimum Likelihood under Misspecification}
We begin by defining the setup and introducing key regularity conditions. For $\Theta \subseteq \mathbb{R}^d$, let us define a statistical model 
 $\mathcal{P}_{\Theta} := \{P_\theta \, : \, \theta\in \Theta \}$, dominated by a $\sigma$-finite base measure $\mu$. Throughout, we denote the density with respect to $\mu$ by $p_\theta := dP_\theta / d\mu$. Suppose we have IID observations $Z_1, \ldots, Z_n$ from a data-generating distribution $G$, which may not necessarily belong to the model $\mathcal{P}_{\Theta}$. The target of inference is the parameter $\theta_G$, indexing the distribution $P_{\theta_G} \in \mathcal{P}_{\Theta}$ which minimizes the Kullback–Leibler (KL) divergence between $G$ and the model $\mathcal{P}_{\Theta}$. This parameter is defined by the following KL-projection functional such that 
\begin{align}\label{eq:optimization}
    \theta_G \equiv \theta(G):= \argmin_{\theta \in \Theta}\, \mathrm{KL}(G \| P_{\theta}).
\end{align}
When $G \in \mathcal{P}_{\Theta}$, then it follows that $G = P_{\theta_G}$ $\mu$-almost everywhere. The notation introduced above is summarized in Table~\ref{tab:notation} in the supplement for the reader's convenience. 

Below, we introduce the concept of quadratic mean differentiability (QMD) under model misspecification, which imposes smoothness on the working model $\mathcal{P}_{\Theta}$ while allowing the data-generating distribution $G$ to be non-smooth. We present a nonasymptotic formulation for the definition. Other nonasymptotic definitions can be found, for instance, in \citet{Spokoiny2012Parametric, Spokoiny2015Finite}.
\begin{definition}[QMD under Misspecification] Let $\{\delta_n\}_{n=1}^\infty$ be a non-negative sequence such that $\delta_n \to 0$ as $n \to \infty$. A parametric model $\mathcal{P}_{\Theta}$ is said to be QMD at $\theta_G$ if there exist a $G$-square integrable score function $\dot\ell_{G} : \mathcal{Z} \mapsto \mathbb{R}^d$ and a  modulus of continuity $\phi_G : \mathbb{R}_+ \mapsto \mathbb{R}_+$ such that for any $\delta \le \delta_n$, 
\begin{align}\label{eq:dqm}
    \sup_{\|h\|_{I_{\tG}} \le \delta}\,\E_G \left[\left(\frac{p^{1/2}_{\theta_G+h}}{p^{1/2}_{\tG}}-1-\frac{1}{2}h^\top \dot\ell_{G}\right)^2\right] \le \phi_G(\delta),
\end{align}
and $\phi_G(\delta)/\delta^2 \to 0$ as $n \to \infty$. The Fisher information matrix is defined as $I_{G} := \E_{G}[\dot\ell_{G}\dot\ell_{G}^\top]$.
\end{definition}
When $G=P_{\theta_G}$ holds $\mu$-almost everywhere, this condition reduces to the traditional QMD assumption found in the literature \citep{Lecam1970Assumptions}. Notably, QMD assumption does not require pointwise differentiability of the density function $p_{\theta}$. This type of QMD conditions further implies stochastic local asymptotic normality (LAN) of the log-likelihood ratio, which has also been studied in the context of model misspecification \citep{Kleijn2006Infinite, Kleijn2012Bernstein, Bickel2012semiparametric}. Importantly, this stochastic LAN framework can be extended beyond the IID setting.

The regularity conditions laid out so far have primarily concerned the working statistical model $\mathcal{P}_{\Theta}$. Next, we introduce a regularity condition on the class of distributions $\mathcal{G}$, which contains the data-generating distribution $G$. Roughly speaking, the following condition ensures that the CLT can be invoked for the IID random variables $\{\langle t, \dot\ell_{G}(Z_i)\rangle\, :\, i \in D_2\}$ for all $t \in \mathbb{S}^{d-1}$ where $\dot\ell_{G}$ is the score function in \eqref{eq:dqm}. Below, for any real-valued random variable $X$, we write $\|X\|_p = (\mathbb{E}[|X|^p])^{1/p}$ for any $p > 0$. 
\begin{definition}[Uniform Lindeberg condition]
    A distribution $P$ supported on (a subset of) $\mathbb{R}^d, d\ge1$ is said to satisfy the {\em uniform Lindeberg condition} (ULC) if a mean-zero random variable $H\sim P$  satisfies  
    \begin{align}
        \lim_{\kappa \to \infty}\, \sup_{t \in \mathbb{R}^d}\,  \E\left[\frac{\langle t, H \rangle^2}{\|\langle t, H \rangle\|_{2}^2} \min\left\{1, \frac{|\langle t, H \rangle|}{\kappa \|\langle t, H \rangle\|_{2}}\right\}\right]  = 0.
    \end{align}
    In particular, a class of distributions $\mathcal{P}$ is said to satisfy the ULC if   
     \begin{align}\label{eq:validity-condition-clt}
        \lim_{\kappa \to \infty}\,\sup_{P\in\mathcal{P}}\, \sup_{t \in \mathbb{R}^d}\,  \E_P\left[\frac{\langle t, H \rangle^2}{\|\langle t, H \rangle\|_{2}^2} \min\left\{1, \frac{|\langle t,H\rangle|}{\kappa \|\langle t, H \rangle\|_{2}}\right\}\right]  = 0.
    \end{align}
\end{definition}
This condition ensures that the quantitative bound of the CLT, often referred to as the Berry-Esseen bound, becomes negligible as $n \to \infty$ \citep{katz1963note}. As noted in \citet{takatsu2025bridging}, the ULC is sufficient, but not necessary, for the convergence of the Berry-Esseen bound. Nevertheless, the ULC is more general than the conditions previously explored, for instance, by \citet[Lyapunov ratio]{kim2020dimension} or Theorem 3 of \citet{park2023robust}. Other weaker conditions were considered in \citet{park2023robust} by analyzing projection-functionals based on metrics other than the KL-divergence. The following subsections will present the main results.

\subsection{Extreme Conservativeness of Universal Inference}
To recall, the universal confidence set is defined as follows. Given IID observations $Z_1, \ldots, Z_N$ following the distribution $G$, (1) split the index set $\{1, 2, \ldots, N\}$ into two non-overlapping sets $D_1$ and $D_2$, with $|D_2| = n$; (2) construct an 
 arbitrary initial estimator $\widehat\theta_1$ using $\{Z_i\, : \, i \in D_1\}$; and (3) define the confidence set as
 \begin{align*}
     \mathrm{CI}^{\mathrm{UI}}_{n, \alpha} = \left\{\theta \in \Theta \, : \, \sum_{i \in D_2}\,\log \frac{p_{\widehat\theta_1}(Z_i)}{p_{\theta}(Z_i)} \le \log(1/\alpha) \right\}.
 \end{align*}
Theorem 1 of \citet{wasserman2020universal} proves that the confidence set $\mathrm{CI}^{\mathrm{UI}}_{n, \alpha}$ is finite-sample valid wherever $G \in \mathcal{P}_\Theta$. However, as discussed in Section 3 of \citet{park2023robust}, this finite-sample guarantee does not generality extend to misspecified models, i.e., $G \not\in \mathcal{P}_\Theta$. The first result shows that universal inference is prone to be uniformly extremely conservative for working models satisfying QMD and the class of distributions satisfying ULC. We impose the following conditions. Throughout, we consider a non-negative sequence $\{\delta_n\}_{n=1}^\infty$ such that $\delta_n \to 0$ as $n \to \infty$, which is shared across assumptions.
\begin{enumerate}[label=\textbf{(A\arabic*)},leftmargin=2cm]
    \item The working statistical model $\mathcal{P}_{\Theta}$ satisfies QMD for $\{\delta_n\}_{n=1}^\infty$ as in \eqref{eq:dqm}.\label{as:QMD}
    \item \label{as:hessian} Let $V_{G}$ be a positive definite matrix. There exists a  modulus of continuity $\omega_G : \mathbb{R}_+\mapsto \mathbb{R}_+$ such that for any $\delta \le \delta_n$, 
    \begin{align}\label{eq:modulus-hessian}
        \sup_{\|h\|_{V_{G}} \le \delta}\, \left| \E_G\left[\log \frac{p_{\theta_G + h}}{p_{\theta_G}}\big |D_1\right] +\frac{1}{2} h^\top V_{G} h\right| \le \omega_{G} (\delta),
    \end{align}
    and $\omega_{G}(\delta)/\delta^2 \to 0$ as $n \to \infty$. 
     \item Define the tail probability 
     \begin{align*}
         \tau_{n,G}^{\mathrm{C}}(\delta) := \mathbb{P}_G\left(\,\max\left\{\|\widehat\theta_1- \theta_G\|_{I_{G}}, \|\widehat\theta_1- \theta_G\|_{V_{G}}, \frac{\|\widehat\theta_1 - \theta_G\|^2_{V_{G}}}{\|\widehat\theta_1 - \theta_G\|_{I_{G}}}\right\}> \delta \right),
     \end{align*}
     where $C$ stands for consistency, $I_{G}$ and $V_{G}$ are defined in \eqref{eq:dqm} and \eqref{eq:modulus-hessian} respectively. Then, the initial estimator satisfies that 
     \[\limsup_{n\to \infty}\,\sup_{G \in \mathcal{G}}\, \tau_{n,G}^{\mathrm{C}}(\delta_n)= 0.\]
    \label{as:initial_estimator}
\item \label{as:holder_inequality} Let $p \ge 2$ and let $\{L_{n,G}\}_{n=1}^\infty$ be a sequence that may grow with $n$. For any $\delta \le \delta_n$,
\begin{align}\label{eq:dqm-pmoment}
    \sup_{\|h\|_{I_{G}} \le \delta}\,\left(\E_G \left[\left|\frac{p^{1/2}_{\theta_G+h}}{p^{1/2}_{\theta_G}}-1-\frac{1}{2}h^\top \dot\ell_{G}\right|^{p}\right]\right)^{1/p} \le L_{n,G} \delta.
\end{align}
Furthermore, there exists a  modulus of continuity $\nu_{G,p} : \mathbb{R}_+ \mapsto \mathbb{R}_+$ such that for any $\delta \le \delta_n$,
\begin{align*}
        \sup_{\|h\|_{I_{G}} \le \delta}\, \left( \E_G\left[\left|\log \frac{p_{\theta_G + h}}{p_{\theta_G}}\right|^{(2p)/(p-2)}\right]\right)^{(p-2)/(2p)} \le \nu_{G,p} (\delta),
    \end{align*}
and $\max\{L_{n,G}, 1\} \nu_{G,p}(\delta) \to 0$ as $n \to \infty$.
    \item The class of distribution $\mathcal{G}$ satisfies ULC with $H := \dot\ell_{G}(Z)$ as in \eqref{eq:validity-condition-clt}. \label{as:ULC}
\end{enumerate}
Assumptions \ref{as:QMD} and \ref{as:hessian} impose standard regularity conditions on the working statistical model and the behavior of the optimization problem in \eqref{eq:optimization} at the solution $\theta_G$. Specifically, \ref{as:hessian} assumes a local quadratic expansion of the optimization objective around $\theta_G$. This implicitly requires that the derivative of $\theta \mapsto \E_G[\log p_{\theta}]$ vanishes at $\theta_G$ and that the Hessian matrix $\theta \mapsto V_G$ is positive definite and continuous at $\theta = \theta_G$. Assumptions \ref{as:initial_estimator}---\ref{as:ULC} introduce additional conditions specific to the procedure under consideration. Assumption \ref{as:holder_inequality} is introduced since all results in this manuscript are presented under weak smoothness assumptions on $\theta \mapsto \log p_{\theta}$. If Lipschitz continuity of $\log p_{\theta}$ is assumed, i.e., there exists a $G$-measurable function $\eta$ such that
\[|\log p_{\theta+h}(Z)-\log p_{\theta}(Z)| \le \eta(Z)\|h\|\]
then \ref{as:holder_inequality} roughly amounts to requiring a finite $2p/(p-2)$ moment of $\eta$. This type of assumption is not uncommon in the literature (See Section 3 of \citet{wasserman2020universal}, A1 of \citet{strieder2022choice}, M1 of \citet{shao2022berry}, for instance). Under the Lipschitz continuity assumption, however, the proof becomes more straightforward, and \ref{as:holder_inequality} can be removed completely as the log-likelihood ratio admits an LAN expansion only assuming \ref{as:QMD} and \ref{as:hessian}. See, for instance, Lemma 2.1 of \citet{Kleijn2012Bernstein}. Assumption \ref{as:initial_estimator} can also be dropped but \ref{as:ULC} will become more complicated (See Remark~\ref{remark:dimension-smoothness}). Finally, assumptions \ref{as:QMD}---\ref{as:ULC} are sufficient conditions for uniformly controlling the Berry-Esseen bound over the class of distributions $\mathcal{G}$, though it is not strictly necessary, as discussed in \citet{takatsu2025bridging}. 

In the remainder of the results, we frequently employ the notation $\mathbb{G}^\times_n$ to denote the conditional empirical process. For any $G$-measurable function $f$, we define
\begin{align}\label{eq:cond-Gn}
    \mathbb{G}^\times_n f := n^{-1/2}\sum_{i\in D_2}\left(f(Z_i)-\E_G[f(Z)|D_1]\right).
\end{align}
\begin{theorem}\label{thm:ui-conservatism}
Assume \ref{as:QMD}, \ref{as:hessian} and \ref{as:holder_inequality}. Define the Kolmogorov-Smirnov distance for $G \in \mathcal{G}$ as
\begin{align*}
    \Delta_{n, G}^{\mathrm{BE}} := \sup_{t\in\mathbb{R}}\left|\mathbb{P}_G(\mathbb{G}^\times_n [\sigma^{-1}_{\theta_G, \widehat\theta_1}\xi
    ] \le t | D_1) - \Phi(t)\right|
\end{align*}
where $\xi = \log p_{\widehat\theta_1}(Z) - \log p_{\theta_G}(Z)$ and $\sigma^2_{\theta_G, \widehat\theta_1} = \mathrm{Var}_{G}[\xi | D_1]$. 
    The nonasymptotic miscoverage probability of the universal confidence set for any $G$ satisfies 
    % \begin{align*}
    %     &|\,\mathbb{P}_{G}(\theta_G \not\in \mathrm{CI}^{\mathrm{UI}}_{n, \alpha} | D_1) - \big(1-\Phi(z_\alpha + r^{\mathrm{UI}}_{n,G}) \big)\, | \\
    %     &\quad \le \min\left\{1, \Delta_{n, G}^{\mathrm{BE}} + \inf_{\delta \in (0, \delta_n)}\left\{\mathcal{R}^{\mathrm{UI}}_{n,G}(\delta) + 1\left[\max\left\{\|\widehat\theta_1- \theta_G\|_{I_{G}}, \|\widehat\theta_1- \theta_G\|_{V_{G}}, \frac{\|\widehat\theta_1 - \theta_G\|^2_{V_{G}}}{\|\widehat\theta_1 - \theta_G\|_{I_{G}}}\right\}> \delta\right]\right\}\right\}
    % \end{align*} 
    \begin{align*}
        &|\,\mathbb{P}_{G}(\theta_G \not\in \mathrm{CI}^{\mathrm{UI}}_{n, \alpha}) - \big(1-\E_G[\Phi(z_\alpha + r^{\mathrm{UI}}_{n,G}) \big)]\, |  \\
        &\qquad \le \min\bigg\{1, \E_G[\Delta_{n, G}^{\mathrm{BE}}] + \inf_{\delta \in (0, \delta_n]}\left\{\mathcal{R}^{\mathrm{UI}}_{n,G}(\delta) + \tau_{n,G}^{\mathrm{C}}(\delta)\right\}\bigg\}
    \end{align*} 
    where
    \begin{align*}
        r^{\mathrm{UI}}_{n,G} :=  \frac{\log(1/\alpha)}{n^{1/2}\|\widehat\theta_1 - \theta_G\|_{I_{G}}} + \frac{n^{1/2}\|\widehat\theta_1 - \theta_G\|^2_{V_{G}}}{2\|\widehat\theta_1 - \theta_G\|_{I_{G}}} - z_\alpha,
    \end{align*}
    and
    \begin{align*}
        &\mathcal{R}^{\mathrm{UI}}_{n,G}(\delta) :=  C_{p}\left\{\delta^2 + \left(\frac{\phi_G(\delta)}{\delta^2}\right)^{1/2} + \frac{\omega_{G}(\delta)}{\delta^2} + \left(\max\left\{L_{n,G}, 1\right\}\nu_{G,p}(\delta)\right)^{1/2}\right\}
    \end{align*}
    with a constant $C_p$ only depending on $p$ in \ref{as:holder_inequality}. Additionally, suppose that the initial estimator $\widehat\theta_1$ satisfies \ref{as:initial_estimator} and the class of distributions $\mathcal{G}$ satisfies \ref{as:ULC}. Then   
    \begin{align*}
        \limsup_{n\to \infty}\, \sup_{G\in\mathcal{G}}\, |\,\mathbb{P}_{G}(\theta_G \not\in \mathrm{CI}^{\mathrm{UI}}_{n, \alpha}) - \big(1-\E_G[\Phi(z_\alpha + r^{\mathrm{UI}}_{n,G})]\big)\, | = 0.
    \end{align*}
\end{theorem}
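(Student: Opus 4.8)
The plan is to condition on the fold $D_1$, so that $\widehat\theta_1$ and hence $h := \widehat\theta_1 - \theta_G$ are fixed, reduce the miscoverage event to a one-sided tail of a standardized sum of i.i.d.\ increments, and invoke the conditional Berry--Esseen bound that $\Delta^{\mathrm{BE}}_{n,G}$ already encodes. Writing $\xi = \log p_{\widehat\theta_1} - \log p_{\theta_G}$, $\mu = \E_G[\xi \mid D_1]$ and $\sigma^2 = \sigma^2_{\theta_G,\widehat\theta_1} = \mathrm{Var}_G[\xi\mid D_1]$, the event $\theta_G \notin \mathrm{CI}^{\mathrm{UI}}_{n,\alpha}$ is $\sum_{i\in D_2}\xi_i > \log(1/\alpha)$, which after centering and scaling by $n^{1/2}\sigma$ reads $\mathbb{G}^\times_n[\sigma^{-1}\xi] > t$ with threshold $t = (\log(1/\alpha) - n\mu)/(n^{1/2}\sigma)$. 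By the very definition of $\Delta^{\mathrm{BE}}_{n,G}$ we get $|\mathbb{P}_G(\theta_G\notin\mathrm{CI}^{\mathrm{UI}}_{n,\alpha}\mid D_1) - (1-\Phi(t))| \le \Delta^{\mathrm{BE}}_{n,G}$; taking $\E_G[\cdot]$, using that probabilities lie in $[0,1]$, and the triangle inequality reduces the claim to bounding $\E_G|\Phi(t) - \Phi(z_\alpha + r^{\mathrm{UI}}_{n,G})|$.

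The key observation is that $z_\alpha + r^{\mathrm{UI}}_{n,G}$ equals $t$ evaluated at the idealized pair $(\mu^\ast,\sigma^\ast) := (-\tfrac12\|h\|^2_{V_G},\,\|h\|_{I_G})$, so it suffices to quantify the two substitution errors. The mean substitution is immediate from \ref{as:hessian}, which gives $|\mu-\mu^\ast|\le\omega_G(\delta)$ on $\{\|h\|_{V_G}\le\delta\}$; the absence of a linear term in that quadratic expansion is precisely the stationarity condition $\E_G[\dot\ell_G]=0$ of the KL projection, and it is what makes $\mathrm{Var}_G(h^\top\dot\ell_G)=\|h\|^2_{I_G}$, justifying $\sigma^\ast=\|h\|_{I_G}$. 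For the variance substitution I would write $\xi = 2\log(1+W)$ with $W = p^{1/2}_{\widehat\theta_1}/p^{1/2}_{\theta_G}-1$, decompose $\xi - h^\top\dot\ell_G = 2\,(W-\tfrac12 h^\top\dot\ell_G) + 2\,(\log(1+W)-W)$, bound the first summand in $L^2$ by $\phi_G(\delta)^{1/2}$ through \ref{as:QMD}, and control the logarithmic remainder together with the required cross moments through \ref{as:holder_inequality}. This yields $\sigma^2 = \|h\|^2_{I_G}(1+\text{err})$ with a \emph{relative} error controlled by $(\phi_G(\delta)/\delta^2)^{1/2}$, by $(\max\{L_{n,G},1\}\nu_{G,p}(\delta))^{1/2}$, and by a quadratic term of order $\delta^2$ coming from the $W^2$ contribution, exactly the nontrivial terms appearing in $\mathcal{R}^{\mathrm{UI}}_{n,G}(\delta)$.

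To turn these into a bound on $|\Phi(t)-\Phi(z_\alpha+r^{\mathrm{UI}}_{n,G})|$ I would expand the threshold difference as $t-(z_\alpha+r^{\mathrm{UI}}_{n,G}) = (A-A^\ast)/(n^{1/2}\sigma) + (z_\alpha+r^{\mathrm{UI}}_{n,G})\,(\sigma^\ast-\sigma)/\sigma$, where $A=\log(1/\alpha)-n\mu$ and $A^\ast = \log(1/\alpha)-n\mu^\ast$, so that the variance error enters only through the relative discrepancy $(\sigma^\ast-\sigma)/\sigma$. Rather than applying the crude Lipschitz bound on $\Phi$, which would leave a stray $n^{1/2}$ factor from the mean term, I would combine $|\Phi(t)-\Phi(s)|\le\phi(s\wedge t)\,|t-s|$ with the boundedness of $x\mapsto x\phi(x)$, using the Gaussian density's decay at the (typically large) argument $z_\alpha+r^{\mathrm{UI}}_{n,G}$ to absorb the $n^{1/2}$ growth and recover the relative moduli $\omega_G(\delta)/\delta^2$ and $(\phi_G(\delta)/\delta^2)^{1/2}$. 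Restricting to the good event $\{\max\{\|h\|_{I_G},\|h\|_{V_G},\|h\|^2_{V_G}/\|h\|_{I_G}\}\le\delta\}$, bounding its complement by $\tau^{\mathrm{C}}_{n,G}(\delta)$ with the trivial bound on the probability difference there, taking $\E_G[\cdot]$, and optimizing over $\delta\in(0,\delta_n]$ then delivers the stated nonasymptotic inequality.

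The asymptotic conclusion follows by evaluating the infimum at $\delta=\delta_n$: \ref{as:QMD}, \ref{as:hessian} and \ref{as:holder_inequality} force $\sup_{G\in\mathcal{G}}\mathcal{R}^{\mathrm{UI}}_{n,G}(\delta_n)\to0$, \ref{as:initial_estimator} forces $\sup_{G\in\mathcal{G}}\tau^{\mathrm{C}}_{n,G}(\delta_n)\to0$, and \ref{as:ULC} forces $\sup_{G\in\mathcal{G}}\E_G[\Delta^{\mathrm{BE}}_{n,G}]\to0$ via the Katz--Bentkus form of the Berry--Esseen bound. I expect the main obstacle to be the variance analysis and its propagation through the reciprocal $1/\sigma$: since $\|h\|_{I_G}$ may itself be small, only relative error bounds survive the division, and establishing them uniformly over $\mathcal{G}$ requires transferring the linear-score control of \ref{as:QMD}--\ref{as:holder_inequality} to the genuinely nonlinear statistic $\xi$. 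A closely related subtlety is that \ref{as:ULC} is phrased for the score $\dot\ell_G$, whereas $\Delta^{\mathrm{BE}}_{n,G}$ concerns the standardized $\xi$; one must therefore argue that the Lindeberg-type control passes from $\langle t,\dot\ell_G\rangle$ to $\xi$, again leaning on the $L^2$ and $L^p$ closeness of $\xi$ to $h^\top\dot\ell_G$ supplied by \ref{as:QMD} and \ref{as:holder_inequality}.
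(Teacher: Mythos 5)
Your proposal is correct and follows essentially the same route as the paper's proof: conditioning on $D_1$, reducing miscoverage to a one-sided tail controlled by $\Delta^{\mathrm{BE}}_{n,G}$, substituting the idealized mean $-\tfrac12\|h\|^2_{V_G}$ and standard deviation $\|h\|_{I_G}$ with relative-error bounds from \ref{as:QMD}, \ref{as:hessian}, \ref{as:holder_inequality} on the good event $\{\max\{\|h\|_{I_G},\|h\|_{V_G},\|h\|^2_{V_G}/\|h\|_{I_G}\}\le\delta\}$, bounding its complement by $\tau^{\mathrm{C}}_{n,G}(\delta)$, and transferring the ULC from the score $\dot\ell_G$ to $\xi$ for the uniform Berry--Esseen convergence. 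The only (immaterial) difference is the packaging of the Gaussian comparison step: where you decompose the threshold error into relative mean and scale perturbations and invoke the mean value theorem with the boundedness of $x\mapsto x\phi(x)$, the paper cites Korolev-type lemmas (Lemmas~\ref{lemma:gaussian-approx} and~\ref{lemma:gaussian-approx2}) whose proofs rest on exactly the same fact, $\sup_x x\phi(x)=1/\sqrt{2\pi e}$.
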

The proof of Theorem~\ref{thm:ui-conservatism} is provided in Section~\ref{supp:proof-ui} of the supplement. Theorem~\ref{thm:ui-conservatism} reveals the asymptotic behavior of the universal confidence set across different regimes, depending on the random variable $r^{\mathrm{UI}}_{n,G}$: (1) when $ r^{\mathrm{UI}}_{n,G}\overset{d}{\to}\infty$, the procedure becomes extremely conservative; (2) when $ r^{\mathrm{UI}}_{n,G}\overset{d}{\to} 0$, the procedure becomes exact; (3) when $r^{\mathrm{UI}}_{n,G}$ is negative, the procedure becomes invalid. We claim that (1) is pervasive for high-dimensional or nonparametric problems, while (2) requires some ``miracle". The following corollaries explicitly characterize these cases.
\begin{corollary}\label{cor:trivial-conservative}
    Assume \ref{as:QMD}---\ref{as:ULC}. The universal confidence set is uniformly extremely conservative as in Definition~\ref{def:uniformly-conservative} (and thus valid) if for any $\varepsilon > 0$
     \begin{align}\label{eq:either-or}
     \limsup_{n\to\infty}\sup_{G \in \mathcal{G}}\,\mathbb{P}_G\left(\min\left\{n^{1/2}\|\widehat \theta_1 - \theta_G\|_{I_{G}}, \frac{\|\widehat \theta_1 - \theta_G\|_{I_{G}}}{n^{1/2}\|\widehat \theta_1 - \theta_G\|^2_{V_{G}}}\right\} > \varepsilon \right) = 0.
    \end{align}
\end{corollary}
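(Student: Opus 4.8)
The plan is to deduce Corollary~\ref{cor:trivial-conservative} directly from the uniform statement of Theorem~\ref{thm:ui-conservatism}. Since that theorem already gives
\[
\limsup_{n\to\infty}\,\sup_{G\in\mathcal{G}}\,\big|\,\mathbb{P}_{G}(\theta_G \not\in \mathrm{CI}^{\mathrm{UI}}_{n,\alpha}) - \big(1-\E_G[\Phi(z_\alpha + r^{\mathrm{UI}}_{n,G})]\big)\,\big| = 0,
\]
it suffices to show that, under condition \eqref{eq:either-or}, the ``target'' quantity satisfies $\sup_{G\in\mathcal{G}}\big(1-\E_G[\Phi(z_\alpha + r^{\mathrm{UI}}_{n,G})]\big) \to 0$. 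By the triangle inequality this immediately forces $\sup_{G\in\mathcal{G}}\mathbb{P}_{G}(\theta_G\not\in\mathrm{CI}^{\mathrm{UI}}_{n,\alpha})\to 0$, which is exactly Definition~\ref{def:uniformly-conservative}. Because $\Phi$ is bounded in $[0,1]$ and continuous, the whole problem reduces to showing $r^{\mathrm{UI}}_{n,G}\to+\infty$ in probability, uniformly over $\mathcal{G}$, whenever \eqref{eq:either-or} holds.

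First I would analyze the structure of $r^{\mathrm{UI}}_{n,G}$, dropping the harmless constant $-z_\alpha$. The two nonnegative driving terms are the \emph{studentization} term $A_{n,G} := \log(1/\alpha)\,/\,(n^{1/2}\|\widehat\theta_1-\theta_G\|_{I_G})$ and the \emph{bias} term $B_{n,G} := n^{1/2}\|\widehat\theta_1-\theta_G\|^2_{V_G}\,/\,(2\|\widehat\theta_1-\theta_G\|_{I_G})$, so that $r^{\mathrm{UI}}_{n,G} = A_{n,G}+B_{n,G}-z_\alpha$. The key observation is that the two quantities appearing inside the $\min$ in \eqref{eq:either-or} are precisely (constant multiples of) the reciprocals that control $A_{n,G}$ and $B_{n,G}$: writing $a := n^{1/2}\|\widehat\theta_1-\theta_G\|_{I_G}$ and $b := \|\widehat\theta_1-\theta_G\|_{I_G}\,/\,(n^{1/2}\|\widehat\theta_1-\theta_G\|^2_{V_G})$, we have $A_{n,G} = \log(1/\alpha)/a$ and $B_{n,G} = 1/(2b)$. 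Hence the event $\{\min\{a,b\}>\varepsilon\}$ is exactly the event on which \emph{both} $A_{n,G}<\log(1/\alpha)/\varepsilon$ and $B_{n,G}<1/(2\varepsilon)$ fail to be large; conversely, on its complement at least one of $a,b$ is small, so at least one of $A_{n,G}, B_{n,G}$ is large. This is the crux: condition \eqref{eq:either-or} asserts that with probability tending to one (uniformly in $G$), $\min\{a,b\}$ is \emph{small}, i.e.\ at least one of the two reciprocals blows up, forcing $r^{\mathrm{UI}}_{n,G}$ large.

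Next I would make this quantitative. Fix $M>0$ arbitrary and choose $\varepsilon>0$ small enough that $\varepsilon^{-1}\min\{\log(1/\alpha), 1/2\} \ge M + z_\alpha$. On the event $\{\min\{a,b\}\le\varepsilon\}$, at least one of $A_{n,G}\ge \log(1/\alpha)/\varepsilon$ or $B_{n,G}\ge 1/(2\varepsilon)$ holds, and since both terms are nonnegative we get $r^{\mathrm{UI}}_{n,G} = A_{n,G}+B_{n,G}-z_\alpha \ge \varepsilon^{-1}\min\{\log(1/\alpha),1/2\}-z_\alpha \ge M$. Therefore
\[
\mathbb{P}_G\big(r^{\mathrm{UI}}_{n,G} \ge M\big) \;\ge\; \mathbb{P}_G\big(\min\{a,b\}\le\varepsilon\big) \;=\; 1 - \mathbb{P}_G\big(\min\{a,b\}>\varepsilon\big),
\]
and \eqref{eq:either-or} makes the subtracted probability vanish uniformly, so $\inf_{G\in\mathcal{G}}\mathbb{P}_G(r^{\mathrm{UI}}_{n,G}\ge M)\to 1$ for every $M$. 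Using monotonicity and boundedness of $\Phi$, on $\{r^{\mathrm{UI}}_{n,G}\ge M\}$ we have $\Phi(z_\alpha+r^{\mathrm{UI}}_{n,G})\ge \Phi(z_\alpha+M)$, whence $\E_G[\Phi(z_\alpha+r^{\mathrm{UI}}_{n,G})] \ge \Phi(z_\alpha+M)\,\mathbb{P}_G(r^{\mathrm{UI}}_{n,G}\ge M)$, giving $\liminf_n \inf_{G\in\mathcal{G}}\E_G[\Phi(z_\alpha+r^{\mathrm{UI}}_{n,G})] \ge \Phi(z_\alpha+M)$. Letting $M\to\infty$ sends this bound to $1$, so $\sup_{G\in\mathcal{G}}\big(1-\E_G[\Phi(z_\alpha+r^{\mathrm{UI}}_{n,G})]\big)\to 0$, and combining with the theorem completes the argument.

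The step I expect to require the most care is the bookkeeping that ties the $\min$ in \eqref{eq:either-or} to the lower bound on $r^{\mathrm{UI}}_{n,G}$, specifically verifying that \emph{smallness of either coordinate} of the $\min$ genuinely forces \emph{one} of $A_{n,G}, B_{n,G}$ to be large — this relies on the exact algebraic identification $a = n^{1/2}\|\widehat\theta_1-\theta_G\|_{I_G}$ and $b^{-1} = n^{1/2}\|\widehat\theta_1-\theta_G\|^2_{V_G}/\|\widehat\theta_1-\theta_G\|_{I_G}$, so one must confirm the matching of norms and powers is precisely as written. The only other subtlety is the uniformity: one must carry the $\inf_{G\in\mathcal{G}}$ (equivalently $\sup_{G\in\mathcal{G}}$) through each inequality, but since the threshold $\varepsilon$ chosen for a given $M$ does not depend on $G$, and \eqref{eq:either-or} is already stated as a uniform $\sup_{G\in\mathcal{G}}$ limit, the uniformity propagates without additional work.
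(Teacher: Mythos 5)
Your proposal is correct and follows essentially the same route as the paper: both invoke the uniform approximation from Theorem~\ref{thm:ui-conservatism}, observe that on the event where the minimum in \eqref{eq:either-or} is small at least one of the two nonnegative terms in $r^{\mathrm{UI}}_{n,G}$ must blow up (using exactly the algebraic identification of the reciprocals you describe), lower bound $\E_G[\Phi(z_\alpha+r^{\mathrm{UI}}_{n,G})]$ by a constant times the probability of that event, and let the threshold degenerate to conclude uniform extreme conservativeness. The only difference is cosmetic — you parametrize by $M$ and choose $\varepsilon(M)$, whereas the paper fixes $\varepsilon$, bounds the expectation below by $\min\{\Phi(\log(1/\alpha)/\varepsilon),\Phi(1/(2\varepsilon))\}-\mathbb{P}_G(\Omega_\varepsilon^c)$, and sends $\varepsilon\to0$.
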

\begin{corollary}\label{cor:trivial-correct}
    Assume \ref{as:QMD}---\ref{as:ULC}. Then for any $G \in \mathcal{G}$, 
    \begin{align*}
        \Phi(z_\alpha + r^{\mathrm{UI}}_{n,G})  \ge \Phi(\sqrt{2\log(1/\alpha)\mathfrak{R}_G(\widehat\theta_1 - \theta_G)} )   \quad \text{where}\quad\mathfrak{R}_G(h) = \|h\|_{ V_{G}}^2/\|h\|_{ I_{G}}^2.
    \end{align*}
    In particular, whenever
    \begin{align}\label{eq:invalid-condition}
        \inf_{G\in\mathcal{G}}\sqrt{2\log(1/\alpha) \lambda_{\min}(I^{-1/2}_{G}V_{G} I^{-1/2}_{G})}> z_\alpha,
    \end{align}
    universal inference is provably uniformly conservative, i.e.,
    \begin{align*}
        \liminf_{n\to \infty}\,  \inf_{G\in\mathcal{G}}\,\mathbb{P}_{G}(\theta_G \in \mathrm{CI}^{\mathrm{UI}}_{n, \alpha})  > 1-\alpha.
    \end{align*}
    When $G \in \mathcal{P}_{\Theta}$, it holds that $I_{G} = V_{G}$, which implies 
    \begin{align}\label{eq:ui-correct-model}
        \liminf_{n\to \infty}\,  \inf_{G\in\mathcal{G} \cap\mathcal{P}_{\Theta}}\,\mathbb{P}_{G}(\theta_G \in \mathrm{CI}^{\mathrm{UI}}_{n, \alpha})  \ge \Phi(\sqrt{2\log(1/\alpha)} )  > 1-\alpha
    \end{align}
    for any $\alpha$ bounded away from zero or one. Thus, universal inference can never be exact when the model is correctly specified.
\end{corollary}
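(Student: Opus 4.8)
The plan is to prove the three assertions in sequence, with the first inequality serving as the engine for the rest. For the pointwise bound I would substitute $\|h\|_{V_G}^2 = \mathfrak{R}_G(h)\,\|h\|_{I_G}^2$ with $h = \widehat\theta_1 - \theta_G$ directly into the definition of $r^{\mathrm{UI}}_{n,G}$. Writing $x := n^{1/2}\|\widehat\theta_1 - \theta_G\|_{I_{G}}$, a one-line computation collapses the two fractions into
\begin{align*}
z_\alpha + r^{\mathrm{UI}}_{n,G} = \frac{\log(1/\alpha)}{x} + \frac{x\,\mathfrak{R}_G(\widehat\theta_1 - \theta_G)}{2}.
\end{align*}
The right-hand side has the form $a/x + bx$ with $a,b>0$, which AM--GM bounds below by $2\sqrt{ab} = \sqrt{2\log(1/\alpha)\,\mathfrak{R}_G(\widehat\theta_1 - \theta_G)}$, uniformly in $x>0$. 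Monotonicity of $\Phi$ then gives the first displayed inequality. The key structural point is that this holds pathwise (for every realization of $\widehat\theta_1$), so the randomness will be trivial to carry through afterward.

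For the uniform conservativeness claim I would next pass from $\mathfrak{R}_G$ to its infimum via the Rayleigh quotient: setting $u = I_{G}^{1/2} h$,
\begin{align*}
\mathfrak{R}_G(h) = \frac{h^\top V_{G} h}{h^\top I_{G} h} = \frac{u^\top I_{G}^{-1/2} V_{G} I_{G}^{-1/2} u}{u^\top u} \ge \lambda_{\min}\!\left(I_{G}^{-1/2} V_{G} I_{G}^{-1/2}\right)
\end{align*}
for every $h \neq 0$. Since the right-hand side is nonrandom, combining with the first part and taking $\E_G$ yields $\E_G[\Phi(z_\alpha + r^{\mathrm{UI}}_{n,G})] \ge \Phi(\sqrt{2\log(1/\alpha)\,\lambda_{\min}(I_{G}^{-1/2} V_{G} I_{G}^{-1/2})})$. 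The asymptotic conclusion of Theorem~\ref{thm:ui-conservatism} gives $\sup_{G\in\mathcal{G}}|\mathbb{P}_G(\theta_G \in \mathrm{CI}^{\mathrm{UI}}_{n,\alpha}) - \E_G[\Phi(z_\alpha + r^{\mathrm{UI}}_{n,G})]| \to 0$ (the complement of the stated miscoverage bound), so the liminf of the infimum of the coverage equals that of $\E_G[\Phi(z_\alpha + r^{\mathrm{UI}}_{n,G})]$. Bounding the latter below by monotonicity of $\Phi$ and invoking the hypothesis \eqref{eq:invalid-condition} produces a value strictly exceeding $\Phi(z_\alpha) = 1-\alpha$, which is the claim.

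For the correctly specified case I would first record the information matrix equality: when $G = P_{\theta_G}$, matching the quadratic terms of the QMD expansion \eqref{eq:dqm} and of \eqref{eq:modulus-hessian} (both of which encode the Hessian of the KL divergence at $\theta_G$, with the score mean-zero under $G$) forces $V_{G} = I_{G}$. Hence $I_{G}^{-1/2} V_{G} I_{G}^{-1/2} = \mathrm{Id}$, $\lambda_{\min} = 1$, and the previous bound reduces to $\Phi(\sqrt{2\log(1/\alpha)})$. The final step is the strict scalar inequality $\sqrt{2\log(1/\alpha)} > z_\alpha$, which I would derive from the Gaussian Chernoff bound $\alpha = 1-\Phi(z_\alpha) \le e^{-z_\alpha^2/2}$; because this bound arises from Markov's inequality applied to $e^{z_\alpha Z}$ for a continuous law it is strict, giving $z_\alpha^2/2 < \log(1/\alpha)$ for every $\alpha \in (0,1)$. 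On any interval bounded away from $0$ and $1$ the gap $\sqrt{2\log(1/\alpha)} - z_\alpha$ is continuous and positive, hence bounded below, so $\Phi(\sqrt{2\log(1/\alpha)}) > 1-\alpha$ as required, and exactness is impossible.

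The computations are elementary; the single genuine idea is recognizing the AM--GM structure hidden in $z_\alpha + r^{\mathrm{UI}}_{n,G}$ after the $\mathfrak{R}_G$-substitution, since this simultaneously yields the sharp constant $\sqrt{2\log(1/\alpha)}$ and decouples the bound from the random quantity $n^{1/2}\|\widehat\theta_1 - \theta_G\|_{I_{G}}$. The only place demanding genuine care is the strictness, and its uniformity over $\alpha$, in the final Gaussian tail comparison: a non-strict Chernoff bound would deliver only $\ge 1-\alpha$, which is too weak to conclude that universal inference is never exact under correct specification.
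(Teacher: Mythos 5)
Your proposal is correct and follows essentially the same route as the paper's own proof: the displayed inequality in the paper is exactly your AM--GM bound $a/x + bx \ge 2\sqrt{ab}$ applied pathwise to $z_\alpha + r^{\mathrm{UI}}_{n,G}$, followed by the same Rayleigh-quotient reduction $\mathfrak{R}_G(h) \ge \lambda_{\min}(I_G^{-1/2}V_G I_G^{-1/2})$, the identity $I_G = V_G$ under correct specification, and the strict Gaussian Chernoff bound to get $\Phi(\sqrt{2\log(1/\alpha)}) > 1-\alpha$. Your only additions --- justifying $I_G = V_G$ by matching the QMD and Hessian expansions, and arguing strictness of the Chernoff bound via strictness of Markov's inequality for continuous laws --- are points the paper asserts without elaboration, so they refine rather than depart from its argument.
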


The proofs of Corollaries~\ref{cor:trivial-conservative} and \ref{cor:trivial-correct} are provided in Section~\ref{supp:proof-cor} of the supplement. One of the main claims of \citet{wasserman2020universal} is that finite-sample validity holds regardless of the choice of the initial estimator, as long as the model is correctly specified. However, there is no contradiction in stating that the asymptotic conservativeness crucially depends on the properties of the initial estimator.

Corollary~\ref{cor:trivial-conservative} establishes that universal inference becomes extremely conservative when the initial estimator converges either strictly faster or slower than the root-$n$ rate. In particular, when the dimension of $\theta_G$ is large or $\theta_G$ belongs to a nonparametric class, the method becomes extremely conservative. This aligns with the empirical findings of \citet{tse2022note}. Interestingly, it also implies that universal inference remains asymptotically valid under model misspecification in high-dimensional settings. This does not contradict the findings of \citet{park2023robust} since they focus on the finite-sample guarantees. Indeed, the empirical illustration in Section~\ref{sec:illusration} reveals that while misspecified universal inference can be invalid when the dimension of $\theta_G$ is negligible compared to the sample size, it becomes extremely conservative as the dimension becomes comparable to the sample size (See the right panel of Figure~\ref{fig:coverage}).

Corollaries~\ref{cor:trivial-conservative} and \ref{cor:trivial-correct} together imply that universal inference is uniformly valid when the model is correctly specified, consistent with the original result of \citet{wasserman2020universal}. However, Corollary~\ref{cor:trivial-correct} highlights that universal inference will never achieve the exact coverage level of $1-\alpha$ even under the correct model. Specifically, it cannot achieve the coverage lower than $\Phi(\sqrt{2\log(1/\alpha}))$, up to the random fluctuations from $\Delta_{n, G}^{\mathrm{BE}}$ and the other remainder terms. For instance, when $\alpha=0.05$, the coverage of universal inference will be approximately $0.9928$ or larger. Empirically, this bound provides an extremely accurate estimate of the coverage of universal inference, as demonstrated by the dash-dotted line in the left panel of Figure~\ref{fig:coverage}.

Under condition \eqref{eq:invalid-condition}, universal inference with a misspecified model is provably conservative, and thus valid. More interestingly, if not more alarmingly, universal inference may become invalid, exact or extremely conservative when the model is misspecified. Section~\ref{supp:invalid} of the supplement provides an informal discussion of when the universal confidence set may become invalid. Specifically, when 
\[\sqrt{2\log(1/\alpha) \lambda_{\max}(I^{-1/2}_{G}V_{G} I^{-1/2}_{G})} <  z_\alpha,\]
it is \emph{possible} for the universal confidence set to be invalid, though this is not the only scenario in which universal inference may fail. More broadly, under model misspecification, universal inference lacks an interpretable validity statement, as it may be either conservative or invalid, depending on the behaviors of $\mathfrak{R}_G$ and the initial estimator. The term $\mathfrak{R}_G$ in Corollary~\ref{cor:trivial-correct} represents the ratio between the sandwich variance of the MLE and its variance under the correct model, projected onto some random vector. \citet{Buja2019Models} refers to this quantity as the \emph{RAV} (Ratio of Asymptotic Variances) in the context of assumption-lean linear regression. Section 11 of \citet{Buja2019Models} discusses factors influencing whether RAV is large or small, such as the presence of heterogeneity near the tail of the data-generating distribution. These characteristics will play a role in determining the validity of misspecified universal inference.

The following subsection explores potential solutions to address these challenges in universal inference.

\subsection{Studentized and Bias-corrected Universal Inference}
This section proposes modifications to universal inference through studentization and bias correction. The construction follows \citet{takatsu2025bridging}, which introduced a general confidence set for M-estimation problems. We apply their framework to the KL-projection problem in \eqref{eq:optimization}. 

The key idea is as follows. First, observe that the universal confidence set, as defined in \eqref{eq:ui-def}, involves the sum of independent random variables. Consequently, the CLT may apply when the summands are scaled by the sample standard deviation. Although this approach does not account for the fact that the summands are uncentered, the limiting distribution is always downward biased due to the definition of the optimization problem in \eqref{eq:optimization}. This property allows for the construction of a conservative confidence set. The resulting confidence set, obtained through studentization, is given by
 \begin{align}\label{eq:std-ui-def}
     \mathrm{CI}^{\mathrm{Std}}_{n, \alpha} := \left\{\theta \in \Theta \, : \, n^{-1}\sum_{i \in D_2}\, \log \frac{p_{\widehat\theta_1}(Z_i)}{p_{\theta}(Z_i)} \le n^{-1/2}z_{\alpha}\widehat\sigma_{\theta, \widehat\theta_1} \right\} \quad \text{with}\quad n = |D_2|
 \end{align}
 where $\widehat\sigma^2_{\theta, \widehat\theta_1}$ is a sample variance of $\{\log p_{\widehat \theta_1}(Z_i)-\log p_\theta(Z_i) \, : \, i \in D_2\}$. A similar method, termed \emph{KL Relative Divergence Fit}, was proposed by \citet{park2023robust}, except that they added an independent Gaussian noise to each observation $\log p_{\widehat \theta_1}(Z_i)-\log p_\theta(Z_i)$ for theoretical purposes. This was later proven unnecessary under ULC by \citet{takatsu2025bridging}.
 
While the unaddressed downward bias does not affect validity, it can significantly impact the conservativeness of the procedure. For the KL-projection defined as \eqref{eq:optimization}, the bias at the functional $\theta_G$ is characterized as 
\begin{align*}
    \left|\E_G\left[\log \frac{p_{\widehat\theta_1}}{p_{\theta_G}}\bigg|D_1 \right]
    +\frac{1}{2} (\widehat \theta_1 - \theta_G)^\top  V_{G} (\widehat \theta_1 - \theta_G) \right| \le \omega_{G}(\delta) \quad \text{provided} \quad \|\widehat \theta_1 - \theta_G\|_{V_{G}} \le \delta
\end{align*}
for some modulus of continuity $\omega_{G}$ in view of \ref{as:hessian}. Let $\widehat V_{G}$ be an estimator of the Hessian matrix $V_{G}$, constructed from $\{Z_i \, : \, i\in D_2\}$. See the left panel of Figure~\ref{fig:sample-splitting} for illustration. Once this estimator is available, we can construct a bias-corrected confidence set by manually ``subtracting" the estimated bias:
 \begin{align*}
     \mathrm{CI}^{\mathrm{BC}}_{n, \alpha} := \left\{\theta \in \Theta \, : \, n^{-1}\sum_{i \in D_2}\, \log \frac{p_{\widehat\theta_1}(Z_i)}{p_{\theta}(Z_i)}+\frac{1}{2} (\widehat \theta_1 - \theta)^\top  \widehat V_{G} (\widehat \theta_1 - \theta) \le n^{-1/2}z_{\alpha} \widehat\sigma_{\theta, \widehat\theta_1}\right\}.
 \end{align*}
 This approach was also mentioned in \citet{takatsu2025bridging}. We refer to the final confidence set $\mathrm{CI}^{\mathrm{BC}}_{n, \alpha}$ as \emph{studentized and bias-corrected universal inference}. Before providing the miscoverage result, we introduce the following additional conditions:
 \begin{enumerate}[label=\textbf{(A\arabic*)},leftmargin=2cm]
\setcounter{enumi}{5}
     \item  \label{as:fourth-moment} Let $\{\delta_n\}_{n=1}^\infty$ be the same sequence as in \ref{as:QMD}---\ref{as:holder_inequality}. There exists a  modulus of continuity $\nu_{G,4} : \mathbb{R}_+ \mapsto \mathbb{R}_+$ such that for any $\delta < \delta_n$
\begin{align*}
        \sup_{\|h\|_{I_{G}} \le \delta}\, \left( \E_G\left[\left(\log \frac{p_{\theta_G + h}}{p_{\theta_G}}\right)^{4}\right]\right)^{1/4} \le \nu_{G,4} (\delta),
    \end{align*}
and $n^{-1} (\nu_{G,4}(\delta)/\delta)^4 \to 0$ as $n \to \infty$.
    \item \label{as:double-robust} Let $u = I^{1/2}_{\theta_G}(\widehat \theta_1 - \theta_G) /\|\widehat \theta_1 - \theta_G\|_{I_{G}} \in \mathbb{S}^{d-1}$. Define the tail probability, 
    \begin{align*}
        \tau_{n,G}^{\mathrm{DR}}(\gamma) = \mathbb{P}_G\left(\,n^{1/2}\|\widehat \theta_1 - \theta_G\|_{I_{G}}|u^\top I^{-1/2}_{G}(\widehat V_{G}-V_{G})I^{-1/2}_{G}u| > \gamma \right),
    \end{align*}
    where DR stands for double robustness. There exists a non-negative sequence $\{\gamma_n\}_{n=1}^\infty$ such that $\gamma_n \to 0$ as $n \to \infty$ and
    \begin{align}
    \limsup_{n\to\infty}\,\sup_{G\in\mathcal{G}}\,\tau_{n,G}^{\mathrm{DR}}(\gamma_n)= 0.
    \end{align}
\end{enumerate}
Assumption \ref{as:fourth-moment} ensures that the sample variance consistently estimates its population counterpart. Assumption \ref{as:double-robust} establishes a form of \emph{double robustness} between the estimators $\widehat \theta_1$ and $u^\top \widehat V_{G}u$, requiring their errors to satisfy a second-order bias condition. Although $\widehat V_{G}$ is a $d \times d$ matrix, \ref{as:double-robust} is only stated through a \emph{fixed} univariate projection, effectively reducing the estimation problem to a scalar component. This reduction is a crucial consequence of estimating $\widehat \theta_1$ and $\widehat V_{G}$ from independent sets of data. For further discussion, see Remark~\ref{remark:sample-splitting}. We now present the miscoverage result for the proposed studentized and bias-corrected universal inference.
\begin{theorem}\label{thm:bc-conservatism}
Assume \ref{as:QMD}, \ref{as:hessian}, \ref{as:holder_inequality} and \ref{as:fourth-moment}. For any $G$, the nonasymptotic miscoverage probability of the confidence set satisfies 
\begin{align*}
        &|\,\mathbb{P}_{G}(\theta_G \not\in \mathrm{CI}^{\mathrm{BC}}_{n, \alpha}) - \alpha\, |  \\
        &\qquad \le \min\bigg\{1, \E_G[\Delta_{n, G}^{\mathrm{BE}}] + \inf_{\delta \in (0, \delta_n]}\left\{ \left(\mathcal{R}^{\mathrm{BC}}_{n,G}(\delta)\right)^{1/2}+ \tau_{n,G}^{\mathrm{C}}(\delta)\right\}+\inf_{\gamma \in (0, \gamma_n]}\left\{\gamma + \tau_{n,G}^{\mathrm{DR}}(\gamma)\right\}\bigg\}
    \end{align*} 
and 
    \begin{align*}
        &\mathcal{R}^{\mathrm{BC}}_{n,G}(\delta) := C_{p}\left\{\delta^2 + \left(\frac{\phi_G(\delta)}{\delta^2}\right)^{1/2} + \frac{\omega_{G}(\delta)}{\delta^2} + \left(\max\left\{L_{n,G}, 1\right\}\nu_{G,p}(\delta)\right)^{1/2} + \frac{\nu_{G,4}^2(\delta)}{n^{1/2}\delta^2}\right\}
    \end{align*}
    with a constant $C_p$ only depending on $p$ in \ref{as:holder_inequality}.
    Additionally, suppose that the initial estimator $\widehat\theta_1$ satisfies \ref{as:initial_estimator} and \ref{as:double-robust} and the class of distributions $\mathcal{G}$ satisfies \ref{as:ULC}. Then   
    \begin{align*}
        \limsup_{n\to \infty}\, \sup_{G\in\mathcal{G}}\, |\,\mathbb{P}_{G}(\theta_G \not\in \mathrm{CI}^{\mathrm{BC}}_{n, \alpha}) - \alpha \, | = 0.
    \end{align*}
\end{theorem}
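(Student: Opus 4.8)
\section*{Proof plan for Theorem~\ref{thm:bc-conservatism}}

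The plan is to condition on the first fold $D_1$, under which $\widehat\theta_1$ is fixed and the summands $\xi_i = \log p_{\widehat\theta_1}(Z_i) - \log p_{\theta_G}(Z_i)$, $i\in D_2$, are i.i.d. Writing $\mu = \E_G[\xi\mid D_1]$, $\sigma^2 = \mathrm{Var}_G[\xi\mid D_1]$, $\bar\xi = n^{-1}\sum_{i\in D_2}\xi_i$, and dividing the defining inequality of $\mathrm{CI}^{\mathrm{BC}}_{n,\alpha}$ by $n^{-1/2}\widehat\sigma_{\theta_G,\widehat\theta_1}$, I would rewrite the miscoverage event as $\{T_n > z_\alpha - b_{n,G}\}$, where $T_n = n^{1/2}(\bar\xi-\mu)/\widehat\sigma_{\theta_G,\widehat\theta_1}$ is the studentized centered mean and
\[
b_{n,G} = \frac{n^{1/2}\left(\mu + \tfrac12(\widehat\theta_1-\theta_G)^\top\widehat V_G(\widehat\theta_1-\theta_G)\right)}{\widehat\sigma_{\theta_G,\widehat\theta_1}}
\]
is the residual bias left after the correction. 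Since the target value is $\alpha = 1-\Phi(z_\alpha)$, everything reduces to showing that, conditionally on $D_1$, the law of $T_n$ is close to $\Phi$ and that $b_{n,G}$ is negligible.

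I would control $|\mathbb{P}_G(T_n > z_\alpha - b_{n,G}\mid D_1) - \alpha|$ by a triangle inequality that isolates two effects. First, after relating the $\widehat\sigma$-studentized statistic to its $\sigma$-normalized analogue, the conditional Berry--Esseen bound $\Delta^{\mathrm{BE}}_{n,G}$ lets me swap the law of $T_n$ for $\Phi$; the sample-variance-to-population-variance step is where Assumption~\ref{as:fourth-moment} enters, controlling $\widehat\sigma^2/\sigma^2-1$ in mean square and producing both the extra term $\nu_{G,4}^2(\delta)/(n^{1/2}\delta^2)$ and the overall square root in $(\mathcal{R}^{\mathrm{BC}}_{n,G}(\delta))^{1/2}$. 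Second, since $\Phi$ is Lipschitz, $|\Phi(z_\alpha - b_{n,G}) - \Phi(z_\alpha)|\lesssim |b_{n,G}|$, so the remaining task is a bound on $b_{n,G}$. The LAN-expansion bookkeeping of the score, the QMD remainder $\phi_G$, and the moment control $\nu_{G,p}$ are handled exactly as in the proof of Theorem~\ref{thm:ui-conservatism} and account for the terms of $\mathcal{R}^{\mathrm{BC}}_{n,G}$ that are shared with $\mathcal{R}^{\mathrm{UI}}_{n,G}$.

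The decisive step is the decomposition $\mu + \tfrac12(\widehat\theta_1-\theta_G)^\top\widehat V_G(\widehat\theta_1-\theta_G) = B_1 + B_2$, where $B_1 = \mu + \tfrac12(\widehat\theta_1-\theta_G)^\top V_G(\widehat\theta_1-\theta_G)$ and $B_2 = \tfrac12(\widehat\theta_1-\theta_G)^\top(\widehat V_G - V_G)(\widehat\theta_1-\theta_G)$: the leading quadratic bias driving the conservativeness of plain universal inference (the term $n^{1/2}\|\widehat\theta_1-\theta_G\|_{V_G}^2/(2\|\widehat\theta_1-\theta_G\|_{I_G})$ inside $r^{\mathrm{UI}}_{n,G}$) is exactly canceled by the correction. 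Assumption~\ref{as:hessian} gives $|B_1|\le\omega_G(\|\widehat\theta_1-\theta_G\|_{V_G})$, handled by the same $\omega_G(\delta)/\delta^2$ bookkeeping as in Theorem~\ref{thm:ui-conservatism}. For $B_2$ I would write it through the unit vector $u = I_G^{1/2}(\widehat\theta_1-\theta_G)/\|\widehat\theta_1-\theta_G\|_{I_G}$ as $B_2 = \tfrac12\|\widehat\theta_1-\theta_G\|_{I_G}^2\,u^\top I_G^{-1/2}(\widehat V_G - V_G)I_G^{-1/2}u$, so that on the complement of the event in $\tau_{n,G}^{\mathrm{DR}}(\gamma)$, and using $\widehat\sigma\approx\sigma\asymp\|\widehat\theta_1-\theta_G\|_{I_G}$,
\[
\frac{n^{1/2}|B_2|}{\widehat\sigma}=\frac{\|\widehat\theta_1-\theta_G\|_{I_G}}{2\widehat\sigma}\cdot n^{1/2}\|\widehat\theta_1-\theta_G\|_{I_G}\,\big|u^\top I_G^{-1/2}(\widehat V_G - V_G)I_G^{-1/2}u\big|\lesssim\gamma,
\]
which is precisely the $\inf_\gamma\{\gamma + \tau_{n,G}^{\mathrm{DR}}(\gamma)\}$ contribution. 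That only a \emph{fixed} univariate projection of $\widehat V_G - V_G$ appears --- a consequence of estimating $\widehat\theta_1$ and $\widehat V_G$ on independent folds --- is what makes this scalar bound suffice.

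I would assemble the nonasymptotic estimate by splitting on the good events (the complements of $\tau_{n,G}^{\mathrm{C}}(\delta)$, which supplies the consistency of $\widehat\theta_1$ needed for the approximations above and for comparing $\widehat\sigma$, $\sigma$, and $\|\widehat\theta_1-\theta_G\|_{I_G}$, and of $\tau_{n,G}^{\mathrm{DR}}(\gamma)$), integrating over $D_1$, capping by $1$, and optimizing over $\delta\in(0,\delta_n]$ and $\gamma\in(0,\gamma_n]$. For the uniform limit I would take $\delta=\delta_n$, $\gamma=\gamma_n$: Assumption~\ref{as:initial_estimator} forces $\sup_G\tau_{n,G}^{\mathrm{C}}(\delta_n)\to 0$, Assumption~\ref{as:double-robust} forces $\gamma_n\to 0$ and $\sup_G\tau_{n,G}^{\mathrm{DR}}(\gamma_n)\to 0$, the ULC (Assumption~\ref{as:ULC}) forces $\sup_G\E_G[\Delta^{\mathrm{BE}}_{n,G}]\to 0$ through the Berry--Esseen bound, and the moduli conditions of \ref{as:QMD}, \ref{as:hessian}, \ref{as:holder_inequality}, \ref{as:fourth-moment} force $\sup_G\mathcal{R}^{\mathrm{BC}}_{n,G}(\delta_n)\to 0$; uniformity of every estimate over $\mathcal{G}$ yields the claim. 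The main obstacle is concentrated in the two preceding paragraphs: propagating the double-robustness bound on $B_2$ cleanly through the \emph{random} normalizer $\widehat\sigma$, which first requires establishing, uniformly over $\mathcal{G}$, both $\widehat\sigma/\sigma\to 1$ and $\sigma\asymp\|\widehat\theta_1-\theta_G\|_{I_G}$ --- the interface at which studentization and bias correction interact --- and then guaranteeing that the Berry--Esseen comparison survives the replacement of $\widehat\sigma$ by $\sigma$ uniformly. This is the source of the square-root inflation of $\mathcal{R}^{\mathrm{BC}}_{n,G}$ and where the most delicate moment bookkeeping lies.
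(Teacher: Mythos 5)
Your proposal follows essentially the same route as the paper's proof: conditioning on $D_1$, decomposing the corrected bias into $B_1 = \mu + \tfrac12 h^\top V_G h$ (controlled by \ref{as:hessian} via $\omega_G$) and $B_2 = \tfrac12 h^\top(\widehat V_G - V_G)h$ (controlled through the fixed projection $u$ on the event in \ref{as:double-robust}), swapping in $\Phi$ via the conditional Berry--Esseen bound, handling the variance ratio via \ref{as:fourth-moment} with a Markov-type trade-off whose optimization over the free probability parameter produces the square root on $\mathcal{R}^{\mathrm{BC}}_{n,G}$, and integrating over $D_1$ before optimizing over $\delta$ and $\gamma$. The only cosmetic difference is that you studentize the statistic by $\widehat\sigma$ directly, whereas the paper normalizes by the population $\sigma$ and moves the ratio $\widehat\sigma/\sigma$ into the rejection threshold; this is equivalent bookkeeping.
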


The proof of Theorem~\ref{thm:bc-conservatism} is provided in Section~\ref{supp:proof-bc} of the supplement. Theorem~\ref{thm:bc-conservatism} demonstrates a significant improvement over the original universal inference in terms of asymptotic conservativeness. The proposed method, based on studentization and bias correction, achieves exact coverage once the second-order bias condition is satisfied. Heuristically, for a $d$-dimensional inferential problem, this requirement can be interpreted as
\[n^{1/2}\|\widehat \theta_1-\theta_G\|_{I_{G}} u^\top  I_{G}^{-1/2}(\widehat V_{G} - V_{G}) I_{G}^{-1/2}u = n^{1/2}\cdot O_P\left(\sqrt{\frac{d}{n}}\right)\cdot O_P(n^{-1/2})= O_P\left(\sqrt{\frac{d}{n}}\right).\]
Therefore, the proposed confidence set is expected to achieve near $1-\alpha$ coverage probability when $d = o(n)$. Finally, Theorem~\ref{thm:bc-conservatism} does not rule out the possibility where the  studentized and bias-corrected confidence set becomes invalid for some choice of the estimator $\widehat V_G$, particularly when \ref{as:double-robust} is violated.

\begin{remark}[On the estimation of $V_{G}$]\label{remark:sample-splitting}
The proposed method assumes that the 
initial estimator $\widehat\theta_1$ is computed from $\{Z_i\, : \, i \in D_1\}$ while $\widehat V_{G}$ is obtained from $\{Z_i\, : \, i \in D_2\}$ (See the left panel of Figure~\ref{fig:sample-splitting}). As a result, \ref{as:double-robust} is stated only for univariate projection, benefiting from the independence between the two estimators. When $\widehat \theta_1$ and $\widehat V_{G}$ are obtained from the same data (See the right panel of Figure~\ref{fig:sample-splitting}), however, the requirement becomes 
    \[n^{1/2}\|\widehat \theta_1-\theta_G\|_{I_{G}} \|I^{-1/2}_{G}(\widehat V_{G}-V_{G})I^{-1/2}_{G}\|_{\mathrm{op}} = o_P(1) \quad \text{for} \quad G \in \mathcal{G}.\]
Since it involves estimating a $d \times d$ matrix in the operator norm, some form of growth condition on $d$ is likely unavoidable unless additional structural assumptions, such as sparsity, are imposed. We anticipate that bias correction techniques, such as those in \citet{chang2023inference}, may help mitigate this requirement. Mirroring the results in \citet{chang2023inference}, we conjecture that extreme conservativeness can be avoided when $d = o(n^{2/3})$.

Alternatively, one may consider splitting data into three parts, each dedicated to the construction of $\widehat \theta_1$, $\widehat V_{G}$, and $\mathrm{CI}^{\mathrm{BC}}_{n, \alpha}$ (See the right panel of Figure~\ref{fig:sample-splitting-2} in the supplement). Interestingly, this can lead to an invalid confidence set for large $d$ while the proposed data-splitting scheme, which uses the same data for the construction of $\widehat V_{G}$ and $\mathrm{CI}^{\mathrm{BC}}_{n, \alpha}$, remains valid for the specific numerical problem we analyze in Section~\ref{sec:illusration}. See Section~\ref{supp:self-center} of the supplement for more discussion.
\end{remark}

\begin{figure}
\centering
\begin{subfigure}{0.9\textwidth}
  \centering
  \includegraphics[width=\linewidth]{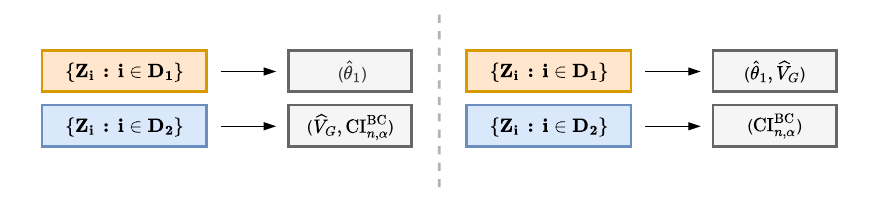}
\end{subfigure}
\caption{A schematic illustrating the sample splitting procedure. The arrow indicates that the objects are derived from the corresponding data. The left panel shows the proposed procedure, while the right panel provides the illustration used in Remark~\ref{remark:sample-splitting}.}
\label{fig:sample-splitting}
\end{figure}
\begin{remark}[Studentization without bias correction]\label{remark:std-only}
In certain scenarios, such as when \ref{as:hessian} is violated due to the optimizer lying on the boundary, or in highly non-smooth problems where the parameter space $\Theta$ is discontinuous, such as \cite{zhang2024winners}, obtaining an estimator of $V_G$ can be challenging or even infeasible. Section~\ref{supp:std} of the supplement provides an analogous result to Theorems~\ref{thm:ui-conservatism} and \ref{thm:bc-conservatism} for universal inference only using studentization. However, the result proves to be unfavorable, as it leads to the bound
   \begin{align*}
        \limsup_{n\to \infty}\,\sup_{G\in \mathcal{G}}\,|\,\mathbb{P}_{G}(\theta_G \not\in \mathrm{CI}^{\mathrm{Std}}_{n, \alpha}) - \big(1-\E_G[\Phi(z_\alpha + r^{\mathrm{Std}}_{n,G})] \big) = 0
    \end{align*} 
    where
    \begin{align}
        r^{\mathrm{Std}}_{n,G} =2^{-1}n^{1/2}\|\widehat \theta_1 - \theta_G\|_{I_{G}}u^\top(I^{-1/2}_{G}V_{G} I^{-1/2}_{G})u
    \end{align}
    with $u = I^{1/2}_{\theta_G}(\widehat \theta_1 - \theta_G) /\|\widehat \theta_1 - \theta_G\|_{I_{G}} \in \mathbb{S}^{d-1}$. This implies that, when the initial estimator converges at a rate strictly slower than root-$n$, relying solely on studentization leads to an extremely conservative confidence set since $r^{\mathrm{Std}}_{n,G} \overset{d}{\to} \infty$. Nevertheless, studentization provides a key benefit to universal inference. Since $r^{\mathrm{Std}}_{n,G} \ge 0$ by definition, the resulting confidence set remains valid under model misspecification. This result should be compared to Theorem~\ref{thm:bc-conservatism}, which requires \ref{as:double-robust} for the validity. This robustness is a significant improvement over universal inference without studentization. 
\end{remark}
\begin{remark}[General power analysis of universal confidence set]\label{remark:power-ui} To the best of our knowledge, a comprehensive width analysis for universal confidence sets under model misspecification is not available in the literature. The closest result is Theorem 5 of \citet{park2023robust}, which provides the shrinkage rate of their confidence set procedures. However, their results are currently limited to cases where the log-likelihood function is uniformly bounded. In contrast, Theorem 8 of \citet{takatsu2025bridging} can be applied to derive a convergence rate for the proposed studentized and bias-corrected universal inference, which allows for unbounded, and possibly heavy-tailed, log-likelihood functions. 
\end{remark}

\begin{remark}[Implicit dimension dependence]\label{remark:dimension-smoothness}
Throughout the manuscript, we have implicitly assumed the existence of a consistent estimator as \ref{as:QMD}, \ref{as:hessian}, \ref{as:holder_inequality} and \ref{as:fourth-moment} provide the corresponding condition for $\theta_G + h$ as $\|h\| \to 0$. This requires conditions on the growth of the dimension such as $d=o(n)$ or additional assumptions like sparsity. Remark 8 of \citet{takatsu2025bridging} suggests that the validity of the confidence set can be established without the existence of consistent estimators, though this would require an alternative assumption to \ref{as:ULC}, that are more challenging to interpret. 
\end{remark}

\section{Illustration --- Misspecified Linear Regression}\label{sec:illusration}

This section illustrates the role of studentization and bias correction in addressing conservativeness in high-dimensional MLE, using a misspecified linear regression model as an example. Consider IID observations $(Y_1, X_1^\top)^\top, \ldots, (Y_n, X_n^\top)^\top \in \mathbb{R} \times \mathbb{R}^d$, generated from the model
\begin{align} \label{eq:model}
Y_i = \theta_G^\top X_i + \epsilon_i, \quad \text{where} \quad \mathbb{E}_G[\epsilon_i X_i] = 0, \quad \text{and} \quad \mathbb{E}_G[\epsilon_i^2 | X_i] = \sigma_i^2.
\end{align}
We assume that the Gram matrix $\mathbb{E}_G[XX^\top]$ is invertible, ensuring the existence of $\theta_G \in \mathbb{R}^d$, even when the true regression function $\mathbb{E}_G[Y_i | X_i]$ is not linear. The target parameter $\theta_G$ can be expressed as the solution to the following optimization problem
\begin{equation}\label{eq:least-square}
\theta_G := \argmin_{\theta \in \mathbb{R}^d} \mathbb{E}_G[(Y - \theta^\top X)^2].
\end{equation}
Suppose universal confidence sets are constructed under a generally misspecified parametric model:
\begin{align*}
\log p_{\theta}(Y_i, X_i) = -\frac{1}{2} \log (2\pi\sigma^2) - \frac{1}{2\sigma^2} (Y_i - \theta^\top X_i)^2,
\end{align*}
where $\sigma \in (0, \infty)$ is assumed to be fixed. In this case, the MLE for $\theta$ under misspecification coincides with the ordinary least squares (OLS) estimator. However, the behavior of universal inference varies significantly depending on the choice of $\sigma$. Suppose the data is split into two parts: \[D_1 := \{(X_i, Y_i) \, : \, 1\le i \le \lfloor N/2\rfloor \} \text{ and } D_2 := \{(X_i, Y_i) \, : \, \lfloor N/2\rfloor + 1 \le i \le N\}.\] Let $\widehat \theta_1$ denote the OLS estimator (or equivalently the MLE) computed from $D_1$, and let $n = |D_2|$. We consider two instantiations of universal inference:
\begin{itemize}
    \item When $\sigma = 1:$
    \begin{align}\label{eq:ui-def-model}
     \mathrm{CI}^{\mathrm{UI-1}}_{n, \alpha} = \left\{\theta \in \Theta \, : \, \sum_{i \in D_2}\,(Y_i-\theta^\top X_i)^2 -(Y_i-\widehat\theta_1^\top X_i)^2  \le 2\log(1/\alpha) \right\}.
 \end{align}
 \item When $\sigma = 0.1:$ \begin{align}\label{eq:ui-def-incorrect}
     \mathrm{CI}^{\mathrm{UI-2}}_{n, \alpha} = \left\{\theta \in \Theta \, : \, \sum_{i \in D_2}\,(Y_i-\theta^\top X_i)^2 -(Y_i-\widehat\theta_1^\top X_i)^2 \le (2\cdot 0.01)\log(1/\alpha) \right\}.
 \end{align}
\end{itemize}

Next, we define the studentized and bias-corrected universal inference for this problem. Noting that studentization is scale-invariant, we can multiply both sides of the inequality in the definition \eqref{eq:std-ui-def} by $2\sigma^2$ to adjust for the scale. This leads to the following studentized universal confidence set: 
\begin{align}\label{eq:std-ui-lr}
     \mathrm{CI}^{\mathrm{Std}}_{n, \alpha} := \left\{\theta \in \Theta \, : \, n^{-1}\sum_{i \in D_2}\, (Y_i-\theta^\top X_i)^2 -(Y_i-\widehat\theta_1^\top X_i)^2 \le n^{-1/2}z_{\alpha}\widehat\sigma_{\theta, \widehat\theta_1} \right\},
\end{align}
where $\widehat\sigma^2_{\theta, \widehat\theta_1}$ is the sample variance of $\{(Y_i-\theta^\top X_i)^2 -(Y_i-\widehat\theta_1^\top X_i)^2 \, : \, i \in D_2\}$. Finally, under the model assumption \eqref{eq:model}, without imposing linearity, we obtain
\begin{align*}
    -\E_G \left[\log \frac{p_{\widehat\theta_1}}{p_{\theta_G}} \bigg|D_1\right]&= -\frac{1}{2 \sigma^2} \left\{\E_G[(Y_i-\theta_G^\top X_i)^2] -\E_G[(Y_i-\widehat\theta_1^\top X_i)^2|D_1]\right\}\\
    &=\frac{1}{2 \sigma^2} (\widehat\theta_1 -\theta_G)^\top \E_G[XX^\top](\widehat\theta_1 -\theta_G).
\end{align*}
To estimate the Gram matrix, we use the sample Gram matrix:
\begin{align*}
     (\widehat\theta_1 -\theta_G)^\top\widehat V_{G}(\widehat\theta_1 -\theta_G) = \frac{1}{2\sigma^2}(\widehat\theta_1 -\theta_G)^\top \left(n^{-1}\sum_{i \in D_2} X_i X_i^\top\right)(\widehat\theta_1 -\theta_G).
\end{align*}
After scaling both sides by $2\sigma^2$, the studentized and bias-corrected universal confidence set is defined as
\begin{equation}\label{eq:bc-ui-lr}
    \begin{aligned}
        \mathrm{CI}^{\mathrm{BC}}_{n, \alpha} := \bigg\{\theta \in \Theta \, : \, &n^{-1}\sum_{i \in D_2}\, (Y_i-\theta^\top X_i)^2 -(Y_i-\widehat\theta_1^\top X_i)^2 \\
&\qquad+ (\widehat \theta_1 - \theta)^\top  \bigg(n^{-1} \sum_{i \in D_2} X_iX_i^\top\bigg)(\widehat \theta_1 - \theta) \le n^{-1/2}z_{\alpha} \widehat\sigma_{\theta, \widehat\theta_1}\bigg\}.
    \end{aligned}
\end{equation}
The confidence sets in \eqref{eq:std-ui-lr} and \eqref{eq:bc-ui-lr} coincide with the confidence set for the least squares problem \eqref{eq:least-square}, as proposed by \citet{takatsu2025bridging}. Theorem 12 of \citet{takatsu2025bridging} already establishes that the diameter of $\mathrm{CI}^{\mathrm{Std}}_{n, \alpha}$ converges at a minimax rate. Since $\mathrm{CI}^{\mathrm{BC}}_{n, \alpha} \subseteq \mathrm{CI}^{\mathrm{Std}}_{n, \alpha}$, the diameter of $\mathrm{CI}^{\mathrm{BC}}_{n, \alpha}$ also converges at a minimax rate. 

We now assess the empirical coverage of four different confidence sets. For given $N \ge 2$ and $d$, independent observations $(X_i, Y_i) \in \mathbb{R}^d \times \mathbb{R}$, $1 \le i \le N$ are generated from the following model:
\begin{align}\label{eq:exp-model1}
    Y_i = \theta_G^\top X_i + \epsilon_i  \quad\text{where}\quad X_i \sim N(0_d, I_d) \quad \text{and}\quad \epsilon_i \sim N(0,1),
\end{align}
with $\theta_G = (d^{-1/2},\ldots, d^{-1/2})$. This setup makes the first universal confidence set, given by \eqref{eq:ui-def-model}, correctly specified, which is seemingly the most favorable scenario for this method. In contrast, the second universal confidence set, given by \eqref{eq:ui-def-incorrect}, is misspecified due to the incorrect choice of the variance term. It is straightforward to verify \ref{as:QMD}---\ref{as:fourth-moment}. Assumption \ref{as:double-robust} is satisfied when $d = o(n)$.

Figure~\ref{fig:coverage} displays the empirical coverage of the 95\% confidence sets, computed from 1000 replications. The four methods under consideration are labeled as follows: \emph{Universal Inference (Correct)} corresponds to the confidence set in \eqref{eq:ui-def-model}, \emph{Universal Inference (Incorrect)} to the confidence set in \eqref{eq:ui-def-incorrect}, \emph{Studentized} to the confidence set in \eqref{eq:std-ui-lr}, and \emph{Studentized + Bias-corrected} to the confidence set in \eqref{eq:bc-ui-lr}. The nominal level of $0.95$ is shown as a dashed line. Additionally, we provide the theoretical lower bound coverage level of universal inference under the correct model, given by $\Phi(\sqrt{2\log(1/\alpha)})$ with $\alpha=0.05$, shown as a dash-dotted line in the left panel. See the inequality~\eqref{eq:ui-correct-model} in Corollary~\ref{cor:trivial-correct}.

The left panel of Figure~\ref{fig:coverage} displays the empirical coverage for each method when $d=5$ and $10 \le N \le 5000$. In this scenario, we observe that both \emph{Universal Inference (Correct)} and \emph{Studentized} are conservative, though not extremely so, with their coverage probabilities slightly below 1. This behavior is consistent with Corollary~\ref{cor:trivial-correct} and Remark~\ref{remark:std-only}. Empirically, the coverage probability aligns closely with the theoretical lower bound, $\Phi(\sqrt{2\log(1/\alpha)})\approx 0.9928$. On the other hand, \emph{Universal Inference (Incorrect)} remains invalid even as the sample size increases. Finally, \emph{Studentized + Bias-corrected} achieves coverage accuracy near the $1-\alpha$ level, in agreement with Theorem~\ref{thm:bc-conservatism}, as we are in the regime where $d = o(n)$.

The right panel of Figure~\ref{fig:coverage} displays the coverage for each method when $N=500$ and $2 \le d \le 250$ (with $n = |D_2| = 250$). Here, both \emph{Universal Inference (Correct)} and \emph{Studentized} become extremely conservative as the dimension $d$ increases, as expected from Corollary~\ref{cor:trivial-conservative} and Remark~\ref{remark:std-only}. \emph{Universal Inference (Incorrect)} is invalid for small values of $d$, but eventually becomes extremely conservative as $d$ increases, confirming Corollary \ref{cor:trivial-conservative}. Finally, \emph{Studentized + Bias-corrected} maintains $1-\alpha$ level coverage when $d = o(n)$, gradually becoming conservative, and ultimately reaching extremely conservative as $d$ approaches $n$. This agrees with Theorem~\ref{thm:bc-conservatism} in the regime where $d=o(n)$; however, the theorem does not provide the validity guarantee beyond this regime. Interestingly, the confidence set remains valid even when $d$ is comparable to $n$. Section~\ref{supp:self-center} of the supplement provides an informal explanation to this phenomenon. The discussion in Section~\ref{supp:self-center} suggests that the use of the same dataset to estimate $V_G$ and to construct the confidence set leads to an implicit bias correction through self-centering. While the effect of model misspecification in this section appears mild, as the observations are generated from a Gaussian model, Section~\ref{sup:laplace} of the supplement provides additional results under more prominent model misspecification, where the general narratives remain similar.
\begin{figure}
\centering
\begin{subfigure}{0.9\textwidth}
  \centering
  \includegraphics[width=\linewidth]{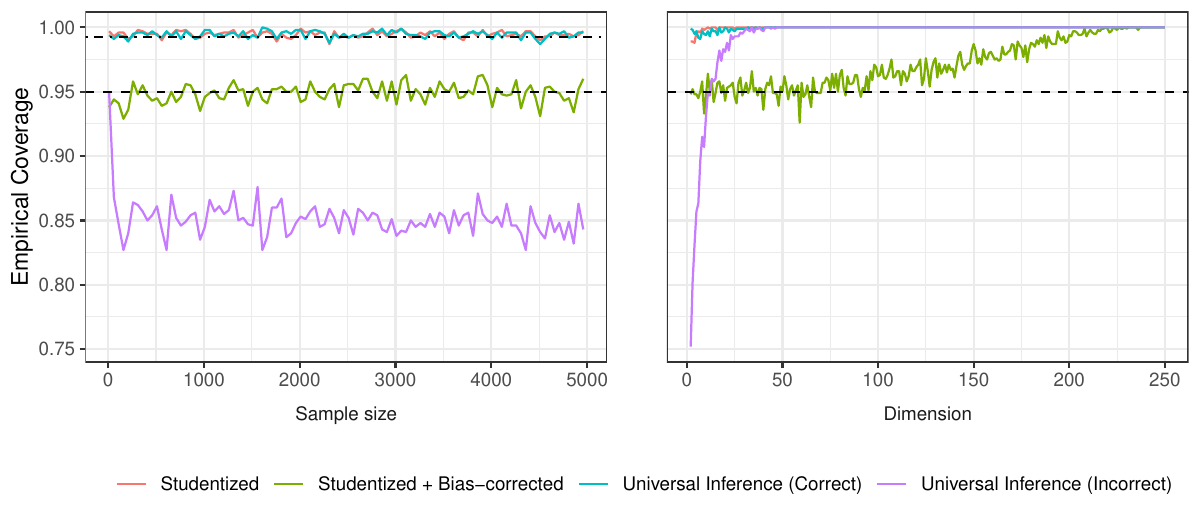}
  \label{fig:sub1}
\end{subfigure}
\caption{Comparison of the empirical coverage of $95\%$ confidence sets for fixed dimension with varying sample size (left panel) and fixed sample size with varying dimension (right panel). The empirical coverage is computed from 1000 replications. The methods are labeled as: \emph{Universal Inference (Correct)} based on \eqref{eq:ui-def-model}, \emph{Universal Inference (Incorrect)} based on \eqref{eq:ui-def-incorrect}, \emph{Studentized} based on 
 \eqref{eq:std-ui-lr}, and \emph{Studentized + Bias-corrected} based on \eqref{eq:bc-ui-lr}. The left panel shows that as the sample size increases, both \emph{Universal Inference (Correct)} and \emph{Studentized} are conservative, \emph{Universal Inference (Incorrect)} is invalid, and \emph{Studentized + Bias-corrected} achieves nominal coverage. The right panel shows that as the dimension increases, \emph{Universal Inference (Correct)} and \emph{Studentized} are extremely conservative with the coverage probability of $1$, \emph{Universal Inference (Incorrect)} transitions from invalid to extremely conservative, and \emph{Studentized + Bias-corrected} transitions from nominal to extremely conservative. These findings align with the theoretical expectations discussed in the manuscript.}
\label{fig:coverage}
\end{figure}

\section{Summary and Conclusions}
This manuscript presents new theoretical results on universal inference under model misspecification, focusing on the precise gap between its validity and conservatives. The corresponding analyses identify the regime where the method becomes extremely conservative. To address this issue, we propose a general remedy based on studentization and bias correction, which improves coverage accuracy when the product of the estimation error of two unknowns is negligible, exhibiting an intriguing double robustness property. Since such rate conditions often depend on the growth of the dimensions of the parameter space, avoiding extreme conservativeness is likely a dimension-dependent challenge without additional assumptions.

We anticipate that the studentization and bias correction approach could be extended to a broader class of methods, particularly those based on e-values, where validity is typically established via Markov’s inequality or its derivatives. While our current analysis relies on the CLT and its quantitative refinements, namely the Berry-Esseen bound, within an IID setting, many e-value procedures are designed for non-IID applications. Extending these techniques to non-IID data streams presents a promising avenue for future research. However, implementing effective bias correction in these settings is expected to be significantly more challenging.
 
\paragraph{Acknowledgments}
The author gratefully acknowledges Arun Kumar Kuchibhotla for proposing the idea of analyzing universal inference through the lens of the Berry-Esseen bound. The author also sincerely appreciates his numerous suggestions and corrections, as well as his encouragement to maintain this paper as a single-author work. The author wishes to thank Sivaraman Balakrishnan and Larry Wasserman for commenting on the manuscript. The author also acknowledges Woonyoung Chang for helpful discussions related to Section~\ref{supp:self-center} in the supplement.

\bibliographystyle{apalike}
\bibliography{ref.bib}
\newpage

\setcounter{section}{0}
\setcounter{equation}{0}
\setcounter{figure}{0}
\renewcommand{\thesection}{S.\arabic{section}}
\renewcommand{\theequation}{E.\arabic{equation}}
\renewcommand{\thefigure}{A.\arabic{figure}}
\renewcommand{\theHsection}{S.\arabic{section}}
\renewcommand{\theHequation}{E.\arabic{equation}}
\renewcommand{\theHfigure}{A.\arabic{figure}}

 \begin{center}
  \Large {\bf Supplement to ``On the Precise Asymptotics of Universal Inference''}
  \end{center}
       \vspace{1mm}
\begin{abstract}
This supplement contains the proofs of all the main results in the paper and some supporting lemmas.
\end{abstract}
\vspace{1mm}
\section*{Notation}
The table below summarizes the common objects we frequently refer to.
\begin{table}[h]
\centering
\begin{tblr}{
  hline{1-2, 7} = {-}{},
}
\textbf{Notation} & \textbf{Description}                  \\
$G$      & The data-generating distribution. \\
$\mathcal{G}$         & The class of distributions in which $G$ is contained.                             \\
$\mathcal{P}_{\Theta}$ 
         &  The working parametric model, which \emph{may not} contain $G$. \\            
$P_{\theta_G}$ & The KL-projection of $G$ onto $\mathcal{P}_{\Theta}$.\\
$\theta_G$ & The inference of interest, corresponding to the parameter for $P_{\theta_G}$.
\end{tblr}
\caption{Summary of notation used throughout the main text of the manuscript.}\label{tab:notation}
\end{table}
\section{Proof of Theorem~\ref{thm:ui-conservatism}}\label{supp:proof-ui}
Throughout, let $n$ denote the cardinality of the second data $D_2$. Conditioning on $D_1$, universal inference provides the following confidence set:
\begin{align*}
    \mathrm{CI}^{\mathrm{UI}}_{n, \alpha} &=\left\{\theta \in \Theta\, :\, \sum_{i\in D_2} \log \frac{p_{\widehat\theta_1}(Z_i)}{p_{\theta}(Z_i)}\le \log(1/\alpha) \right\} \\
    % &= \left\{\theta \in \Theta\, :\, n^{-1}\sum_{i\in D_2} \log \frac{p_{\widehat\theta_1}(Z_i)}{p_{\theta}(Z_i)}\le n^{-1}\log(1/\alpha) \right\}\\
    % &= \left\{\theta \in \Theta\, :\, n^{-1}\sum_{i\in D_2} \log \frac{p_{\widehat\theta_1}(Z_i)}{p_{\theta}(Z_i)} - \E_G\left[\log \frac{p_{\widehat\theta_1}}{p_{\theta}}\bigg|D_1\right]{p_{\theta}}\le n^{-1}\log(1/\alpha) - \E_G\left[\log \frac{p_{\widehat\theta_1}}{p_{\theta}}\bigg|D_1\right]\right\}\\
    &= \left\{\theta \in \Theta\, :\, \mathbb{G}^\times_n [\sigma^{-1}_{\theta, \widehat\theta_1}\xi]\le n^{1/2}\sigma^{-1}_{\theta, \widehat\theta_1}\left(n^{-1}\log(1/\alpha) - \E_G\left[\log \frac{p_{\widehat\theta_1}}{p_{\theta}}\bigg|D_1\right]\right)\right\}
\end{align*}
where $\xi = \log p_{\widehat\theta_1}(Z) - \log p_{\theta}(Z)$ and $\sigma^2_{\theta, \widehat\theta_1} = \mathrm{Var}_G[\xi|D_1]$. The conditional empirical process is defined as \eqref{eq:cond-Gn}. Let us define the Kolmogorov-Smirnov distance for each $G$ as
\begin{align*}
    \Delta_{n, G}^{\mathrm{BE}} := \sup_{t\in\mathbb{R}}\left|\mathbb{P}_G(\mathbb{G}^\times_n [\sigma^{-1}_{\theta_G, \widehat\theta_1}\xi
    ] \le t|D_1) - \Phi(t)\right|
\end{align*}
where $\Phi(\cdot)$ is the cumulative distribution function of the standard normal random variable. 

Now, we denote $\widehat\theta_1 = \theta_G + h$. For any data-generating distribution $G$, the nonasymptotic pointwise miscoverage probability of universal inference is given by
\begin{align*}
    &\mathbb{P}_{G}(\theta_G \not\in \mathrm{CI}^{\mathrm{UI}}_{n, \alpha}|D_1)\\
    &\qquad = \mathbb{P}_{G}\left(\mathbb{G}^\times_n [\sigma^{-1}_{\theta_G, \theta_G + h}\xi] >  n^{1/2}\left(n^{-1}\log(1/\alpha) - \E_G\left[\log \frac{p_{\theta_G + h}}{p_{\theta_G}}\bigg|D_1\right]\right)\bigg|D_1\right) \\
    &\qquad= 1-\mathbb{P}_{G}\left(\mathbb{G}^\times_n [\sigma^{-1}_{\theta_G, \theta_G + h}\xi] \le  n^{1/2}\sigma^{-1}_{\theta_G, \theta_G + h}\left(n^{-1}\log(1/\alpha) - \E_G\left[\log \frac{p_{\theta_G + h}}{p_{\theta_G}}\bigg|D_1\right]\right) \bigg|D_1\right) \\
    & \qquad\qquad + \Phi\left(n^{-1/2}\sigma^{-1}_{\theta_G, \theta_G + h}\log(1/\alpha) - n^{1/2}\sigma^{-1}_{\theta_G, \theta_G + h}\E_G\left[\log \frac{p_{\theta_G + h}}{p_{\theta_G}}\bigg|D_1\right]\right)\\
    & \qquad\qquad - \Phi\left(n^{-1/2}\sigma^{-1}_{\theta_G, \theta_G + h}\log(1/\alpha) - n^{1/2}\sigma^{-1}_{\theta_G, \theta_G + h}\E_G\left[\log \frac{p_{\theta_G + h}}{p_{\theta_G}}\bigg|D_1\right]\right).
\end{align*}
Hence, we obtain 
\begin{align}\label{eq:thm1-mid-step}
    \left|\, \mathbb{P}_{G}(\theta_G \not\in \mathrm{CI}^{\mathrm{UI}}_{n, \alpha}|D_1) - \left(1-\Phi\left(\frac{\log(1/\alpha)}{n^{1/2}\sigma_{\theta_G, \theta_G + h}} - \frac{n^{1/2}}{\sigma_{\theta_G, \theta_G + h}}\E_G\left[\log \frac{p_{\theta_G + h}}{p_{\theta_G}}\bigg|D_1\right]\right)\right)\, \right| \le \Delta_{n, G}^{\mathrm{BE}}.
\end{align}
Let $\{\delta_n\}_{n=1}^\infty$ be a sequence used to define \ref{as:QMD}, \ref{as:hessian} and \ref{as:holder_inequality}. For any $\delta \in (0, \delta_n]$, we define an indicator function 
\begin{align}\label{eq:indicator1}
    1_\delta(h) := \begin{cases}
        1, & \text{if} \quad \max\left\{\|h\|_{I_{G}}, \|h\|_{V_{G}}, \|h\|^2_{V_{G}}/\|h\|_{I_{G}}\right\}\le \delta\\
        0 & \text{else}.
    \end{cases}
\end{align}
We split the analysis, depending on the evaluation of this indicator function. First, we focus on the case when this evaluates to one. In other words, we assume that 
\[\max\left\{\|h\|_{I_{G}}, \|h\|_{V_{G}}, \|h\|^2_{V_{G}}/\|h\|_{I_{G}}\right\}\le \delta.\]
In the following, we use a series of elementary results to relate two Gaussian CDFs that are ``close''. Specifically, let $q \in [1-\varepsilon, 1+\varepsilon]$ for some $\varepsilon$ such that $1-\varepsilon > 0$. Then, by Lemma 3(i) of \citet{korolev2017bounds}, it follows
\begin{align*}
    \sup_{t}\,|\Phi(q t) - \Phi(t)| \le \min\left\{1, \frac{2\varepsilon}{\sqrt{2\pi e}}\right\}.
\end{align*}
Furthermore, Lemma~\ref{lemma:gaussian-approx2} below also provides the bound 
\begin{align*}
    \sup_{t_1, t_2}|\Phi(t_1 + qt_2)-\Phi(t_1 + t_2)| \le \min\left\{1, \frac{2\varepsilon}{\sqrt{2\pi e}}\right\}.
\end{align*}
Under assumptions \ref{as:QMD}, \ref{as:hessian} and \ref{as:holder_inequality}, Lemma~\ref{lemma:variance} implies 
\begin{align*}
\left|\|h\|^{-2}_{I_{G}}\sigma^2_{\theta_G, \theta_G + h} -1\right| \le r_n(h)
\end{align*}
where the exact expression for $h \mapsto r_n(h)$ is provided in the statement of Lemma~\ref{lemma:variance}. Noting that for $\|h\|^{-1}_{I_{G}}\sigma_{\theta_G, \theta_G + h} \ge 0$,
\begin{align*}
\left|\|h\|^{-1}_{I_{G}}\sigma_{\theta_G, \theta_G + h} -1\right| \le \left|\|h\|^{-1}_{I_{G}}\sigma_{\theta_G, \theta_G + h} -1\right|\left|\|h\|^{-1}_{I_{G}}\sigma_{\theta_G, \theta_G + h} +1\right| \le r_n(h),
\end{align*}
and hence assuming $r_n(h) < 1$ (without loss of generality), this implies that $\|h\|^{-1}_{I_{G}}\sigma_{\theta_G, \theta_G + h} \in [1-r_n(h), 1+r_n(h)]$. It then follows by Lemma~\ref{lemma:gaussian-approx},
\begin{align*}
    &\left|\Phi\left(\frac{\log(1/\alpha)}{n^{1/2}\sigma_{\theta_G, \theta_G + h}} - \frac{n^{1/2}}{\sigma_{\theta_G, \theta_G + h}}\E_G\left[\log \frac{p_{\theta_G + h}}{p_{\theta_G}}\bigg|D_1\right]\right)\right. \\
    &\left.\qquad-\Phi\left(\frac{\log(1/\alpha)}{n^{1/2}\|h\|_{I_{G}}} - \frac{n^{1/2}}{\|h\|_{I_{G}}}\E_G\left[\log \frac{p_{\theta_G + h}}{p_{\theta_G}}\bigg|D_1\right]\right)\right| \le \min\left\{1, \frac{2r_n(h)}{\sqrt{2\pi e}}\right\}.
\end{align*}
Next, \ref{as:hessian} implies that 
\begin{align*}
    \left|-\left(\frac{1}{2}\|h\|^2_{V_{G}}\right)^{-1}\E_G\left[\log \frac{p_{\theta_G + h}}{p_{\theta_G}}\bigg|D_1\right]-1\right| \le \frac{2\omega_{G}(\|h\|_{V_{G}})}{\|h\|^2_{V_{G}}}.
\end{align*}
Assuming that $2\omega_{G}(\|h\|_{V_{G}})/\|h\|^2_{V_{G}} < 1$ (again, without loss of generality), Lemma~\ref{lemma:gaussian-approx2} implies the following bound:
\begin{align*}
    &\left|\Phi\left(\frac{\log(1/\alpha)}{n^{1/2}\|h\|_{I_{G}}} - \frac{n^{1/2}}{\|h\|_{I_{G}}}\E_G\left[\log \frac{p_{\theta_G + h}}{p_{\theta_G}}\bigg|D_1\right]\right)-\Phi\left(\frac{\log(1/\alpha)}{n^{1/2}\|h\|_{I_{G}}} + \frac{n^{1/2}\|h\|^2_{V_{G}}}{2\|h\|_{I_{G}}}\right)\right|\\
    &\qquad \le \min\left\{1, \frac{4\omega_{G}(\|h\|_{V_{G}})}{\sqrt{2\pi e}\|h\|^2_{V_{G}}}\right\}.
\end{align*}
Returning to \eqref{eq:thm1-mid-step}, the application of a series of triangle inequalities implies
\begin{align*}
    &\left|\, \mathbb{P}_{G}(\theta_G \not\in \mathrm{CI}^{\mathrm{UI}}_{n, \alpha}|D_1) - \left(1-\Phi\left(\frac{\log(1/\alpha)}{n^{1/2}\|h\|_{I_{G}}} + \frac{n^{1/2}\|h\|^2_{V_{G}}}{2\|h\|_{I_{G}}}\right)\right)\, \right| \\
    &\qquad \le \min\left\{1, \Delta_{n, G}^{\mathrm{BE}}+\left(\frac{2r_n(\delta)}{\sqrt{2\pi e}}+\frac{4\omega_{G}(\delta)}{\sqrt{2\pi e}\delta^2}\right)\right\} \quad \text{when} \quad 1_\delta(h) = 1.
\end{align*}
When $1_\delta(h)$ evaluates zero, we use the trivial upper bound of one. Hence, combining with the case with $1_\delta(h) = 0$, we obtain 
\begin{align*}
    &\left|\, \mathbb{P}_{G}(\theta_G \not\in \mathrm{CI}^{\mathrm{UI}}_{n, \alpha}|D_1) - \left(1-\Phi\left(\frac{\log(1/\alpha)}{n^{1/2}\|h\|_{I_{G}}} + \frac{n^{1/2}\|h\|^2_{V_{G}}}{2\|h\|_{I_{G}}}\right)\right)\, \right| \\
    &\qquad \le \min\left\{1, \Delta_{n, G}^{\mathrm{BE}}+\left(\frac{2r_n(\delta)}{\sqrt{2\pi e}}+\frac{4\omega_{G}(\delta)}{\sqrt{2\pi e}\delta^2}\right) + (1-1_\delta(h))\right\}.
\end{align*}
In particular, we can assume $r_n(h) \le 1$, without loss of generality, which simplifies the expression to 
\begin{align*}
    r_n(\delta) \le C_{p}\left\{\left(\frac{\phi_G(\delta)}{\delta^2}\right)^{1/2} + \delta^2 + \left(\max\left\{L_{n,G}, 1\right\}\nu_{G,p}(\delta)\right)^{1/2}\right\}
\end{align*}
where $C_{p}$ is a constant only depending on $p$. We conclude the first claim by taking the expectation over $D_1$. In particular,
\begin{align*}
    \E_G[1-1_\delta(h)] = \mathbb{P}_G\left(\max\left\{\|\widehat\theta_1- \theta_G\|_{I_{G}}, \|\widehat\theta_1- \theta_G\|_{V_{G}}, \frac{\|\widehat\theta_1 - \theta_G\|^2_{V_{G}}}{\|\widehat\theta_1 - \theta_G\|_{I_{G}}}\right\}> \delta\right) = \tau^{\mathrm{C}}_{n,G}(\delta)
\end{align*}
and we can take the infimum over $\delta \in (0, \delta_n]$ as the choice of $\delta$ was arbitrary.

The second claim follows by establishing the convergence of the remainder terms. The convergence of the remainder terms, except for $\Delta_{n, G}^{\mathrm{BE}}$, follows from assumptions \ref{as:QMD}---\ref{as:holder_inequality}. It remains to show that 
\[\limsup_{n\to\infty}\,\sup_{G\in \mathcal{G}}\,\E_G\left[\Delta_{n, G}^{\mathrm{BE}}\right] = 0\]
This claim follows directly from Lemma~\ref{lemma:KS-dist} under \ref{as:QMD}---\ref{as:ULC}, which concludes the result.

\section{Proofs of Corollaries~\ref{cor:trivial-conservative} and \ref{cor:trivial-correct}}\label{supp:proof-cor}
\begin{proof}[\bfseries{Proof of Corollary~\ref{cor:trivial-conservative}}]
    From Theorem~\ref{thm:ui-conservatism}, we have
    \begin{align*}
        \mathbb{P}_{G}(\theta_G \not\in \mathrm{CI}^{\mathrm{UI}}_{n, \alpha}) \le  1-\E_G\left[\Phi\left(\frac{\log(1/\alpha)}{n^{1/2}\|h\|_{I_{G}}} + \frac{n^{1/2}\|h\|^2_{V_{G}}}{2\|h\|_{I_{G}}}\right)\right] + \E_G[\textrm{Rem}_{n,G}]
    \end{align*}
    for some remainder term $\textrm{Rem}_{n,G}$ where $\sup_{G\in\mathcal{G}}\,\E_G[\textrm{Rem}_{n,G}] \to 0$ under \ref{as:QMD}---\ref{as:ULC}. We omit the precise definition of the remainder term as it is long. To claim that universal inference is extremely conservative as in Definition~\ref{def:uniformly-conservative}, we need to establish 
    \begin{align}\label{eq:cor1-claim}
        \inf_{G\in\mathcal{G}}\,\E_G\left[\Phi\left(\frac{\log(1/\alpha)}{n^{1/2}\|\widehat \theta_1 - \theta_G\|_{I_{G}}} + \frac{n^{1/2}\|\widehat \theta_1 - \theta_G\|^2_{V_{G}}}{2\|\widehat \theta_1 - \theta_G\|_{I_{G}}}\right)\right] \to 1.
    \end{align}
    For any $\varepsilon > 0$, let $\Omega_\varepsilon$ be an event defined as 
    \begin{align*}
     \Omega_\varepsilon = \left\{\min\left\{n^{1/2}\|\widehat \theta_1 - \theta_G\|_{I_{G}}, \frac{\|\widehat \theta_1 - \theta_G\|_{I_{G}}}{n^{1/2}\|\widehat \theta_1 - \theta_G\|^2_{V_{G}}}\right\} < \varepsilon \right\}.
    \end{align*}
    We then observe
    \begin{align*}
        \Omega_\varepsilon &= \left\{\frac{1}{\varepsilon} < \frac{1}{n^{1/2}\|\widehat \theta_1 - \theta_G\|_{I_{G}}} \right\} \cup \left\{\frac{1}{\varepsilon} < \frac{n^{1/2}\|\widehat \theta_1 - \theta_G\|^2_{V_{G}}}{\|\widehat \theta_1 - \theta_G\|_{I_{G}}}\right\}.
    \end{align*}
    Hence we establish for any $\varepsilon > 0$
    \begin{align*}
        &\E_G\left[\Phi\left(\frac{\log(1/\alpha)}{n^{1/2}\|\widehat \theta_1 - \theta_G\|_{I_{G}}} + \frac{n^{1/2}\|\widehat \theta_1 - \theta_G\|^2_{V_{G}}}{2\|\widehat \theta_1 - \theta_G\|_{I_{G}}}\right)\right] \\
        &\qquad \ge \min\left\{\Phi\left(\frac{\log(1/\alpha)}{\varepsilon}\right), \Phi\left( \frac{1}{2\varepsilon}\right)\right\}- \mathbb{P}_G(\Omega_\varepsilon^c).
    \end{align*}
    By assumption \eqref{eq:either-or}, $\sup_{G\in\mathcal{G}}\mathbb{P}_G(\Omega_\varepsilon^c) \to 0$ for any $\varepsilon > 0$. Finally, the claim is concluded by taking $\varepsilon \to 0$.
\end{proof}
\begin{proof}[\bfseries{Proof of Corollary~\ref{cor:trivial-correct}}]
From Theorem~\ref{thm:ui-conservatism}, we have
    \begin{align*}
        &\mathbb{P}_{G}(\theta_G \in \mathrm{CI}^{\mathrm{UI}}_{n, \alpha}) \ge  \E_G\left[\Phi\left(\frac{\log(1/\alpha)}{n^{1/2}\|h\|_{I_{G}}} + \frac{n^{1/2}\|h\|^2_{V_{G}}}{2\|h\|_{I_{G}}}\right)\right] - \E_G[\textrm{Rem}_{n,G}]\\
        &\Rightarrow \inf_{G\in \mathcal{G}}\mathbb{P}_{G}(\theta_G \in \mathrm{CI}^{\mathrm{UI}}_{n, \alpha}) \ge  \inf_{G\in \mathcal{G}}\E_G\left[\Phi\left(\frac{\log(1/\alpha)}{n^{1/2}\|h\|_{I_{G}}} + \frac{n^{1/2}\|h\|^2_{V_{G}}}{2\|h\|_{I_{G}}}\right)\right] - \sup_{G\in \mathcal{G}}\E_G[\textrm{Rem}_{n,G}]
    \end{align*}
    for some remainder term $\textrm{Rem}_{n,G}$ where $\sup_{G\in\mathcal{G}}\,\E_G[\textrm{Rem}_{n,G}] \to 0$ under \ref{as:QMD}---\ref{as:ULC}. We omit the precise definition of the remainder term.
    
    Let $M_G =n^{1/2}\|\widehat \theta_1 - \theta_G\|_{I_{G}}$ before taking any limit. Then, the remainder term $r^{\mathrm{UI}}_{n,G}$ in Theorem~\ref{thm:ui-conservatism} can be uniformly bounded from below for all $G \in \mathcal{G}$ such that 
    \begin{align*}
        &\frac{\log(1/\alpha)}{n^{1/2}\|\widehat \theta_1 - \theta_G\|_{ I_{G}}} +n^{1/2}\|\widehat \theta_1 - \theta_G\|_{ I_{G}}\frac{\|\widehat \theta_1 - \theta_G\|_{ V_{G}}^2}{2\|\widehat \theta_1 - \theta_G\|_{ I_{G}}^2} \\
        &\qquad = \frac{\log(1/\alpha)}{M_G } +M_G\frac{\|\widehat \theta_1 - \theta_G\|_{ V_{G}}^2}{2\|\widehat \theta_1 - \theta_G\|_{ I_{G}}^2}\ge \left(2\log(1/\alpha)\frac{\|\widehat \theta_1 - \theta_G\|_{ V_{G}}^2}{\|\widehat \theta_1 - \theta_G\|_{ I_{G}}^2}\right)^{1/2}.
    \end{align*}   
    This concludes the first result. 

    In particular, universal inference is provably conservative when
    \[\left(2\log(1/\alpha)\frac{\|\widehat \theta_1 - \theta_G\|_{ V_{G}}^2}{\|\widehat \theta_1 - \theta_G\|_{ I_{G}}^2}\right)^{1/2} \ge \sqrt{2\log(1/\alpha) \lambda_{\min}(I^{-1/2}_{G}V_{G} I^{-1/2}_{G})}> z_\alpha.\]
    When the model is correctly specified, that is $G \in \mathcal{P}_{\Theta}$, we have $I_{G} = V_{G}$, which implies $\lambda_{\min}(I^{-1/2}_{G}V_{G} I^{-1/2}_{G}) = 1$. This leads to the bound
    \begin{align*}
       \inf_{G \in \mathcal{G}}\,\Phi(z_\alpha + r^{\mathrm{UI}}_{n,G})  \ge \Phi(\sqrt{2\log(1/\alpha)})  > 1-\alpha,
    \end{align*}
    where the last inequality holds from Chernoff bound. Let $W \sim N(0,1)$. Then 
    \begin{align*}
        \mathbb{P}(W \ge t) \le \frac{\E[\exp(\lambda W)]}{\exp(\lambda t)} = \exp(-t^2/2).
    \end{align*}
    Choosing $t = \left(2\log(1/\alpha)\right)^{1/2}$, we obtain the inequality. Notably, this bound is strict unless $\alpha$ takes an extreme value. Thus, under the correct model specification, it follows that 
    \begin{align*}
        \E_G[\Phi(z_\alpha + r^{\mathrm{UI}}_{n,G})] -(1-\alpha) &\ge \inf_{G \in \mathcal{G}}\,\E_G[\Phi(z_\alpha + r^{\mathrm{UI}}_{n,G})] -(1-\alpha) \\&\ge \Phi(\sqrt{2\log(1/\alpha)} ) - (1-\alpha) > 0.
    \end{align*}
    This reveals that the universal confidence set is necessarily conservative when the model is correctly specified.
    \end{proof}
    \subsection{Asymptotic Results in Fixed Dimension}\label{supp:invalid}
    For simplicity, we focus on a pointwise result such that $\mathcal{G} = \{G\}$, but the result can be extended to a uniform statement. Now we assume that 
    \begin{align*}
        n^{1/2}I_{G}^{1/2}(\widehat \theta_1 - \theta_G) \overset{d}{\to} W_G
    \end{align*}
    for some limiting distribution $W_G$. Then by the dominated convergence theorem and the continuous mapping theorem, we have 
 \begin{align*}
        &\E_G\left[\Phi\left(\frac{\log(1/\alpha)}{\|W_G\|} +2^{-1}\|W_G\|\lambda_{\min}(I^{-1/2}_{G}V_{G} I^{-1/2}_{G})\right)\right] \le \lim_{n\to \infty}\E_G\left[\Phi\left(z_\alpha + r^{\mathrm{UI}}_{n,G}\right)\right] \\
        &\qquad \le \E_G\left[ \Phi\left(\frac{\log(1/\alpha)}{\|W_G\|} +2^{-1}\|W_G\|\lambda_{\max}(I^{-1/2}_{G}V_{G} I^{-1/2}_{G})\right)\right].
    \end{align*}
    Universal inference is conservative when 
    \begin{align}\label{eq:asym1}
        1-\alpha \le \E_G\left[\Phi\left(\frac{\log(1/\alpha)}{\|W_G\|} +2^{-1}\|W_G\|\lambda_{\min}(I^{-1/2}_{G}V_{G} I^{-1/2}_{G})\right)\right].
    \end{align}
    In particular, this inequality has no solution when 
    \[\sqrt{2\log(1/\alpha) \lambda_{\min}(I^{-1/2}_{G}V_{G} I^{-1/2}_{G})}> z_\alpha, \]
    and thus the inequality \eqref{eq:asym1} becomes strict.
    Similarly, universal inference can become invalid when 
    \begin{align*}
        \E_G\left[\Phi\left(\frac{\log(1/\alpha)}{\|W_G\|} +2^{-1}\|W_G\|\lambda_{\max}(I^{-1/2}_{G}V_{G} I^{-1/2}_{G})\right)\right] \le  1-\alpha. 
    \end{align*}
    which has a solution when 
    \begin{align*}
        \sqrt{2\log(1/\alpha) \lambda_{\max}(I^{-1/2}_{G}V_{G} I^{-1/2}_{G})}\le z_\alpha.
    \end{align*}
    On the event when 
    \begin{align*}
        &\|W_G\| \in \\
        & \left(\frac{z_\alpha - \sqrt{z_\alpha^2 -2\log(1/\alpha) \lambda_{\max}(I^{-1/2}_{G}V_{G} I^{-1/2}_{G})}}{\lambda_{\max}(I^{-1/2}_{G}V_{G} I^{-1/2}_{G})}, \frac{z_\alpha + \sqrt{z_\alpha^2 -2\log(1/\alpha) \lambda_{\max}(I^{-1/2}_{G}V_{G} I^{-1/2}_{G})}}{\lambda_{\max}(I^{-1/2}_{G}V_{G} I^{-1/2}_{G})}\right)
    \end{align*}
    universal inference has coverage strictly less than $1-\alpha$, and hence probably invalid. It is, however, difficult to establish the existence of an estimator that satisfies the above condition. 

    In general, it seems to require some miraculous alignment of a constant to claim the existence of an estimator that achieves
    \begin{align*}
        \lim_{n\to \infty}\E_G\left[\Phi\left(\frac{\log(1/\alpha)}{n^{1/2}\|\widehat \theta_1 - \theta_G\|_{I_{G}}} + \frac{n^{1/2}\|\widehat \theta_1 - \theta_G\|_{I_{G}}\|\widehat \theta_1 - \theta_G\|^2_{V_{G}}}{2\|\widehat \theta_1 - \theta_G\|^2_{I_{G}}}\right)\right] \to 1-\alpha.
    \end{align*}
    
\section{Proof of Theorem~\ref{thm:bc-conservatism}}\label{supp:proof-bc}
 Throughout, let $n$ denote the cardinality of the second data $D_2$. Similar to the proof of Theorem~\ref{thm:ui-conservatism}, we proceed by providing the bounds on the difference between conditional probability given $D_1$. Let $\{\delta_n\}_{n=1}^\infty$ be a sequence used to define \ref{as:QMD}, \ref{as:hessian} and \ref{as:holder_inequality}. For any $\delta \in (0, \delta_n]$, we define an indicator function $h \mapsto 1_\delta(h)$ as \eqref{eq:indicator1}. We first analyze the case when this function evaluates one.

Let $\{\gamma_n\}_{n=1}^\infty$ be a sequence used to define \ref{as:double-robust}. For any $\gamma \in (0, \gamma_n]$, define an event 
\begin{align*}
    \Omega_\gamma := \left\{ n^{1/2}\|\widehat \theta_1 - \theta_G\|_{I_{G}}|u^\top I^{-1/2}_{G}(\widehat V_{G}-V_{G})I^{-1/2}_{G}u| \le \gamma \right\} 
\end{align*}
Also, for any $\varepsilon \in (0,1]$, we define an event 
\begin{align*}
    \Omega_\varepsilon := \left\{ \left|\frac{\widehat\sigma^2_{\theta_G,\theta_G+h}}{\|h\|^2_{I_{G}}}-1\right|   \le \varepsilon^{-1}\sqrt{2}\left(\frac{\nu_{G,4}^2(\delta)}{n^{1/2}\delta^2} +r_n(\delta)\right)\right\}
\end{align*}
where $h \mapsto r_n(h)$ is defined in the statement of Lemma~\ref{lemma:variance} and $\nu_{G,4}$ is provided in \ref{as:fourth-moment}.

Conditioning on $D_1$, the studentized and bias-corrected universal inference provides the following confidence set:
\begin{align*}
    &\mathrm{CI}^{\mathrm{BC}}_{n, \alpha} = \left\{\theta \in \Theta \, : \, n^{-1}\sum_{i \in D_2}\, \log \frac{p_{\widehat\theta_1}(Z_i)}{p_{\theta}(Z_i)}+\frac{1}{2} (\widehat \theta_1 - \theta)^\top  \widehat V_G (\widehat \theta_1 - \theta) \le n^{-1/2}z_{\alpha} \widehat\sigma_{\theta, \widehat\theta_1}\right\}\\
    &= \left\{\theta \in \Theta\, :\, \mathbb{G}^\times_n [\sigma^{-1}_{\theta, \widehat\theta_1}\xi]\le \frac{\widehat\sigma_{\theta, \widehat\theta_1}}{\sigma_{\theta, \widehat\theta_1}}z_\alpha - \frac{n^{1/2}}{\sigma_{\theta, \widehat\theta_1}}\left(\frac{1}{2} (\widehat \theta_1 - \theta)^\top  \widehat V_G (\widehat \theta_1 - \theta)  + \E_G\left[\log \frac{p_{\widehat\theta_1}}{p_{\theta}}\bigg|D_1\right]\right)\right\}
\end{align*}
where $\xi = \log p_{\widehat\theta_1}(Z) - \log p_{\theta}(Z)$ and $\widehat\sigma^2_{\theta, \widehat\theta_1}$ is the sample variance of $\{\xi_i \, :\, i \in D_2\}$. Now, we denote $\widehat\theta_1 = \theta_G + h$. Then the conditional miscoverage rate can be bounded as 
\begin{align*}
    &\mathbb{P}_{G}(\theta_G \not\in \mathrm{CI}^{\mathrm{BC}}_{n, \alpha}|D_1) 
 \\
 &\qquad\le  \mathbb{P}_{G}\left(\mathbb{G}^\times_n [\sigma^{-1}_{\theta_G, \theta_G + h}\xi]> \frac{\widehat\sigma_{\theta_G, \theta_G + h}}{\sigma_{\theta_G, \widehat\theta_1}}z_\alpha - \frac{n^{1/2}}{\sigma_{\theta_G, \theta_G + h}}\left(\frac{1}{2} h^\top  (\widehat V_{G} -V_{G})h  \right)\right.\\
     &\qquad\qquad\qquad\qquad\qquad \left. - \frac{n^{1/2}}{\sigma_{\theta_G, \theta_G + h}}\left(\E_G\left[\log \frac{p_{\theta_G + h}}{p_{\theta_G}}\bigg|D_1\right]+\frac{1}{2} h^\top   V_{G} h\right) \cap \Omega_\gamma1_\delta(h) \cap \Omega_\varepsilon1_\delta(h)\right) \\
     &\qquad \qquad + \mathbb{P}_G(\Omega_\gamma^c1_\delta(h)|D_1)+ \mathbb{P}_G(\Omega_\varepsilon^c1_\delta(h)|D_1) \\
 &\qquad\le  1-\Phi\left(\left|1-\varepsilon^{-1}\sqrt{2}\left(\frac{\nu_{G,4}^2(\delta)}{n^{1/2}\delta^2} +r_n(\delta)\right)\right|z_\alpha - \frac{\|h\|_{I_G}\gamma}{2\sigma_{\theta_G, \theta_G + h}}\right.\\
     &\qquad\qquad\qquad\qquad\qquad\qquad\qquad \left. - \frac{n^{1/2}}{\sigma_{\theta_G, \theta_G + h}}\left(\E_G\left[\log \frac{p_{\theta_G + h}}{p_{\theta_G}}\bigg|D_1\right]+\frac{1}{2} h^\top   V_{G} h\right) \right)\\
     &\qquad \qquad + \Delta^{\mathrm{BE}}_{n,G} + \mathbb{P}_G(\Omega_\gamma^c1_\delta(h)|D_1)+ \mathbb{P}_G(\Omega_\varepsilon^c1_\delta(h)|D_1).
\end{align*}
By the analogous applications of Lemma \ref{lemma:gaussian-approx2} as in the proof of Theorem~\ref{thm:ui-conservatism}, we obtain 
\begin{align*}
    &\left|\Phi\left(\left|1-\varepsilon^{-1}\sqrt{2}\left(\frac{\nu_{G,4}^2(\delta)}{n^{1/2}\delta^2} +r_n(\delta)\right)\right|z_\alpha - \frac{\|h\|_{I_G}\gamma}{2\sigma_{\theta_G, \theta_G + h}}\right.\right.\\
     &\left.\qquad\qquad\qquad \left. - \frac{n^{1/2}}{\sigma_{\theta_G, \theta_G + h}}\left(\E_G\left[\log \frac{p_{\theta_G + h}}{p_{\theta_G}}\bigg|D_1\right]+\frac{1}{2} h^\top   V_{G} h\right) \right) - \Phi(z_\alpha + \gamma)\right| \\
     &\qquad \le \min\left\{1,  \frac{2r_n(\delta)}{\sqrt{2\pi e}} + \frac{2}{\varepsilon\sqrt{\pi e}}\left(\frac{\nu_{G,4}^2(\delta)}{n^{1/2}\delta^2} +r_n(\delta)\right)+\frac{4\omega_{G}(\delta)}{\sqrt{2\pi e}\delta^2}\right\}
\end{align*}
assuming \ref{as:QMD}, \ref{as:hessian}, \ref{as:holder_inequality} and \ref{as:fourth-moment}.
Also, by $(2\pi)^{-1/2}$-Lipschitz continuity of the Gaussian CDF, we obtain 
\begin{align*}
    |\Phi(z_\alpha + \gamma) - \Phi(z_\alpha)| \le \frac{\gamma}{\sqrt{2\pi}}.
\end{align*}

Repeating the analogous derivation for the lower bound, we obtain  
\begin{align*}
    &|\mathbb{P}_{G}(\theta_G \not\in \mathrm{CI}^{\mathrm{BC}}_{n, \alpha}|D_1) - \Phi(z_\alpha)| \\
    &\qquad \le\min\bigg\{1,  \Delta^{\mathrm{BE}}_{n,G} + \mathbb{P}_G(\Omega_\gamma^c1_\delta(h)|D_1)+ \mathbb{P}_G(\Omega_\varepsilon^c1_\delta(h)|D_1)\\
    &\qquad\qquad+\frac{2r_n(\delta)}{\sqrt{2\pi e}} + \frac{2}{\varepsilon\sqrt{\pi e}}\left(\frac{\nu_{G,4}^2(\delta)}{n^{1/2}\delta^2} +r_n(\delta)\right)+\frac{4\omega_{G}(\delta)}{\sqrt{2\pi e}\delta^2}+ \frac{\gamma}{\sqrt{2\pi}}\bigg\} \quad \text{when} \quad 1_\delta(h) = 1.
\end{align*}
When $1_\delta(h)$ evaluates zero, we use the trivial upper bound of one, which leads 
\begin{align*}
    &|\mathbb{P}_{G}(\theta_G \not\in \mathrm{CI}^{\mathrm{BC}}_{n, \alpha}|D_1) - \Phi(z_\alpha)| \\
    &\qquad \le\min\bigg\{1,  \Delta^{\mathrm{BE}}_{n,G} + \mathbb{P}_G(\Omega_\gamma^c1_\delta(h)|D_1)+ \mathbb{P}_G(\Omega_\varepsilon^c1_\delta(h)|D_1)\\
    &\qquad\qquad+\frac{2r_n(\delta)}{\sqrt{2\pi e}} + \frac{2}{\varepsilon\sqrt{\pi e}}\left(\frac{\nu_{G,4}^2(\delta)}{n^{1/2}\delta^2} +r_n(\delta)\right)+\frac{4\omega_{G}(\delta)}{\sqrt{2\pi e}\delta^2}+ \frac{\gamma}{\sqrt{2\pi}} + (1-1_\delta(h))\bigg\}.
\end{align*}

In particular, we have
\begin{align*}
    \mathbb{P}_G(\Omega_\varepsilon^c1_\delta(h)|D_1) &= \mathbb{P}_G\left( \left\{\left|\frac{\widehat\sigma^2_{\theta_G,\theta_G+h}}{\|h\|^2_{I_{G}}}-1\right|   > \varepsilon^{-1}\sqrt{2}\left(\frac{\nu_{G,4}^2(\delta)}{n^{1/2}\delta^2} +r_n(\delta)\right)\right\}1_\delta(h)\right) \le \varepsilon
\end{align*}
where the last inequality follows from Markov's inequality and Lemma \ref{lemma:var-consist} assuming \ref{as:QMD}, \ref{as:hessian}, \ref{as:holder_inequality}, and \ref{as:fourth-moment} as well as $1_\delta(h)=1$. Also, observe that 
\begin{align*}
\E_G[\mathbb{P}_G(\Omega_\gamma^c1_\delta(h)|D_1)] \le  \mathbb{P}_G\left(\,n^{1/2}\|\widehat \theta_1 - \theta_G\|_{I_{G}}|u^\top I^{-1/2}_{G}(\widehat V_{G}-V_{G})I^{-1/2}_{G}u| > \gamma \right) = \tau_{n,G}^{\mathrm{DR}}(\gamma)
\end{align*}
and
\begin{align*}
    \E_G[1-1_\delta(h)] = \mathbb{P}_G\left(\max\left\{\|\widehat\theta_1- \theta_G\|_{I_{G}}, \|\widehat\theta_1- \theta_G\|_{V_{G}}, \frac{\|\widehat\theta_1 - \theta_G\|^2_{V_{G}}}{\|\widehat\theta_1 - \theta_G\|_{I_{G}}}\right\}> \delta\right) = \tau^{\mathrm{C}}_{n,G}(\delta)
\end{align*}
as before. The first claim is concluded by taking the expectation over $D_1$, simplifying the expression for the upper bound by leveraging the fact that the bound can be taken less than or equal to 1 without loss of generality, and by taking infimum over $\gamma \in (0, \gamma_n]$, $\varepsilon \in (0,1]$ and $\delta \in (0,\delta_n]$. In particular, the terms involving $\varepsilon$ will be simplified since we have that 
\begin{align*}
    \inf_{\varepsilon \in (0,1]}\left\{\varepsilon + \frac{\mathcal{R}^{\mathrm{BC}}_{n,G}(\delta)}{\varepsilon}\right\} = 2\left(\mathcal{R}^{\mathrm{BC}}_{n,G}(\delta)\right)^{1/2}.
\end{align*}

The second claim follows by establishing the convergence of the remainder terms. The convergence of the remainder terms, except for $\Delta_{n, G}^{\mathrm{BE}}$, follows from assumptions \ref{as:QMD}---\ref{as:holder_inequality}, \ref{as:fourth-moment} and \ref{as:double-robust}. It remains to show that 
\[\limsup_{n\to\infty}\,\sup_{G\in \mathcal{G}}\,\E_G\left[\Delta_{n, G}^{\mathrm{BE}}\right] = 0\]
This claim follows directly from Lemma~\ref{lemma:KS-dist} under \ref{as:QMD}---\ref{as:ULC}. We thus conclude the result by taking the limit as $n \to \infty$.

\section{Additional Results without Bias Correction}\label{supp:std}
This section provides an additional result corresponding to Theorems~\ref{thm:ui-conservatism} and \ref{thm:bc-conservatism}, but for the studentized universal inference without bias correction. 
\begin{theorem}\label{thm:std-conservatism}
Assume \ref{as:QMD}---\ref{as:holder_inequality} and \ref{as:fourth-moment}. Define the Kolmogorov-Smirnov distance for $G \in \mathcal{G}$ as
\begin{align*}
    \Delta_{n, G}^{\mathrm{BE}} := \sup_{t\in\mathbb{R}}\left|\mathbb{P}_G(\mathbb{G}^\times_n [\sigma^{-1}_{\theta_G, \widehat\theta_1}\xi
    ] \le t | D_1) - \Phi(t)\right|
\end{align*}
where $\xi = \log p_{\widehat\theta_1}(Z) - \log p_{\theta_G}(Z)$ and $\sigma^2_{\theta_G, \widehat\theta_1} = \mathrm{Var}_{P}[\xi | D_1]$. 
    For any $G$, the nonasymptotic miscoverage probability of the confidence set satisfies 
      \begin{align*}
        &|\,\mathbb{P}_{G}(\theta_G \not\in \mathrm{CI}^{\mathrm{Std}}_{n, \alpha}) - \big(1-\E_G[\Phi(z_\alpha + r^{\mathrm{Std}}_{n,G}) \big)]\, |  \\
        &\qquad \le \min\bigg\{1, \E_G[\Delta_{n, G}^{\mathrm{BE}}]  +\inf_{\delta \in (0, \delta_n]}\left\{\left(\mathcal{R}^{\mathrm{Std}}_{n,G}(\delta)\right)^{1/2}+ \tau_{n,G}^{\mathrm{C}}(\delta)\right\}\bigg\}
    \end{align*} 
    where
    \begin{align}
        r^{\mathrm{Std}}_{n,G} :=2^{-1}n^{1/2}\|\widehat \theta_1 - \theta_G\|_{I_{G}} u^\top I^{-1/2}_{G}V_{G} I^{-1/2}_{G} u,
    \end{align}
    and
    \begin{align*}
        &\mathcal{R}^{\mathrm{Std}}_{n,G}(\delta) := C_{p}\left\{\delta^2 + \left(\frac{\phi_G(\delta)}{\delta^2}\right)^{1/2} + \frac{\omega_{G}(\delta)}{\delta^2} + \left(\max\left\{L_{n,G}, 1\right\}\nu_{G,p}(\delta)\right)^{1/2} + \frac{\nu_{G,4}^2(\delta)}{n^{1/2}\delta^2}\right\}
    \end{align*}
    with a constant $C_p$ only depending on $p$ in \ref{as:holder_inequality}. Additionally, suppose that the initial estimator $\widehat\theta_1$ satisfies \ref{as:initial_estimator} and the class of distributions $\mathcal{G}$ satisfies \ref{as:ULC}. Then   
    \begin{align*}
        \limsup_{n\to \infty}\, \sup_{G\in\mathcal{G}}\, |\,\mathbb{P}_{G}(\theta_G \not\in \mathrm{CI}^{\mathrm{Std}}_{n, \alpha}) - \big(1-\E_G[\Phi(z_\alpha + r^{\mathrm{Std}}_{n,G})]\big)\, | = 0.
    \end{align*}
\end{theorem}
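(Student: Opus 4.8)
The plan is to mirror the proof of Theorem~\ref{thm:bc-conservatism} almost verbatim, omitting only the bias-correction machinery, so that the \emph{uncorrected} bias survives inside the Gaussian CDF and is identified with $r^{\mathrm{Std}}_{n,G}$. Conditioning on $D_1$ and writing $\widehat\theta_1 = \theta_G + h$, I would first rewrite the studentized confidence set through the conditional empirical process: the event $\theta_G\not\in\mathrm{CI}^{\mathrm{Std}}_{n,\alpha}$ is equivalent to
\[\mathbb{G}^\times_n[\sigma^{-1}_{\theta_G,\theta_G+h}\xi] > \frac{\widehat\sigma_{\theta_G,\theta_G+h}}{\sigma_{\theta_G,\theta_G+h}}z_\alpha - \frac{n^{1/2}}{\sigma_{\theta_G,\theta_G+h}}\E_G\left[\log\frac{p_{\theta_G+h}}{p_{\theta_G}}\Big|D_1\right],\]
with $\xi = \log p_{\theta_G+h} - \log p_{\theta_G}$. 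Applying the Berry-Esseen bound replaces this conditional tail probability by $1-\Phi$ of the right-hand side, up to an error of $\Delta^{\mathrm{BE}}_{n,G}$.

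The core of the argument is to show that the argument of $\Phi$ is close to $z_\alpha + r^{\mathrm{Std}}_{n,G}$ on a high-probability event. I would introduce the indicator $1_\delta(h)$ as in \eqref{eq:indicator1} together with the sample-variance event $\Omega_\varepsilon$ from the proof of Theorem~\ref{thm:bc-conservatism}. On $\{1_\delta(h)=1\}\cap\Omega_\varepsilon$, three approximations combine: (i) Lemma~\ref{lemma:variance} gives $\sigma_{\theta_G,\theta_G+h}/\|h\|_{I_G}\to 1$ with error $r_n(h)$; (ii) the event $\Omega_\varepsilon$, justified by Lemma~\ref{lemma:var-consist} and \ref{as:fourth-moment}, controls $\widehat\sigma_{\theta_G,\theta_G+h}/\|h\|_{I_G}$, so that $\frac{\widehat\sigma}{\sigma}z_\alpha$ stays within the stated tolerance of $z_\alpha$; and (iii) \ref{as:hessian} replaces $\E_G[\log(p_{\theta_G+h}/p_{\theta_G})\mid D_1]$ by $-\tfrac12\|h\|^2_{V_G}$ with error $\omega_G(\|h\|_{V_G})$. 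After substituting $\sigma \approx \|h\|_{I_G}$, the uncorrected bias term becomes $\frac{n^{1/2}\|h\|^2_{V_G}}{2\|h\|_{I_G}}$, and the algebraic identity $\|h\|^2_{V_G} = \|h\|^2_{I_G}\,u^\top I^{-1/2}_G V_G I^{-1/2}_G u$ with $u = I^{1/2}_G h/\|h\|_{I_G}$ identifies this with exactly $r^{\mathrm{Std}}_{n,G}$. The Gaussian-comparison Lemmas~\ref{lemma:gaussian-approx} and~\ref{lemma:gaussian-approx2} then convert each multiplicative or additive perturbation into an additive bound on $|\Phi(\cdot) - \Phi(z_\alpha + r^{\mathrm{Std}}_{n,G})|$, keeping $r^{\mathrm{Std}}_{n,G}$ intact inside the CDF.

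On the complementary events I would bound everything trivially: $\{1_\delta(h)=0\}$ contributes $1-1_\delta(h)$ and $\Omega_\varepsilon^c$ contributes at most $\varepsilon$ by Markov's inequality together with Lemma~\ref{lemma:var-consist}. Taking expectation over $D_1$ converts $\E_G[1-1_\delta(h)]$ into $\tau^{\mathrm{C}}_{n,G}(\delta)$, optimizing the $\varepsilon$-dependent terms via $\inf_{\varepsilon\in(0,1]}\{\varepsilon + \mathcal{R}^{\mathrm{Std}}_{n,G}(\delta)/\varepsilon\} = 2(\mathcal{R}^{\mathrm{Std}}_{n,G}(\delta))^{1/2}$, and taking the infimum over $\delta\in(0,\delta_n]$ yields the nonasymptotic bound. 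The asymptotic claim then follows by verifying that every remainder vanishes uniformly: the constituents of $\mathcal{R}^{\mathrm{Std}}_{n,G}$ vanish under \ref{as:QMD}, \ref{as:hessian}, \ref{as:holder_inequality} and \ref{as:fourth-moment}; $\tau^{\mathrm{C}}_{n,G}(\delta_n)\to 0$ under \ref{as:initial_estimator}; and $\sup_{G}\E_G[\Delta^{\mathrm{BE}}_{n,G}]\to 0$ by Lemma~\ref{lemma:KS-dist} under \ref{as:QMD}---\ref{as:ULC}.

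The main point to get right is bookkeeping rather than a deep obstacle: the nuisance factors $\widehat\sigma/\sigma$ and $\sigma/\|h\|_{I_G}$ must be sent to $1$ while $r^{\mathrm{Std}}_{n,G}$ is left untouched inside $\Phi$. Unlike in Theorem~\ref{thm:bc-conservatism}, there is no $\Omega_\gamma$ event and no appeal to \ref{as:double-robust}, precisely because the bias is never corrected; this is exactly why the conclusion is a characterization of (extreme) conservativeness through $r^{\mathrm{Std}}_{n,G}\ge 0$ rather than an exactness statement.
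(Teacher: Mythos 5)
Your proposal is correct and follows exactly the route the paper intends: the paper's own proof of Theorem~\ref{thm:std-conservatism} is a one-line reference stating it is analogous to that of Theorem~\ref{thm:bc-conservatism} up to the definition of $r^{\mathrm{Std}}_{n,G}$, and your write-up fills in precisely that analogy --- dropping the $\Omega_\gamma$/\ref{as:double-robust} machinery, keeping the uncorrected bias inside $\Phi$, and identifying it with $r^{\mathrm{Std}}_{n,G}$ via $\|h\|^2_{V_G} = \|h\|^2_{I_G}\, u^\top I^{-1/2}_G V_G I^{-1/2}_G u$. The bookkeeping with $1_\delta(h)$, $\Omega_\varepsilon$, the Gaussian-comparison lemmas, the $\inf_{\varepsilon}$ optimization, and Lemma~\ref{lemma:KS-dist} for the asymptotic claim all match the paper's argument for Theorem~\ref{thm:bc-conservatism}.
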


\begin{proof}[\bfseries{Proof of Theorem \ref{thm:std-conservatism}}]
The proof is analogous to that of Theorem~\ref{thm:bc-conservatism}, up to the definition of the remainder term $r^{\mathrm{Std}}_{n,G}$. 
\end{proof}

\section{Supporting Lemmas}
\begin{lemma}\label{lemma:variance}
    Let $\{\delta_n\}_{n=1}^\infty$ be the sequence appearing in the definition of \ref{as:QMD}, \ref{as:hessian} and \ref{as:holder_inequality}. For $h \in \Theta$, assume $\max\left\{\|h\|_{I_{G}}, \|h\|_{V_{G}}, \|h\|^2_{V_{G}}/\|h\|_{I_{G}}\right\}\le \delta$ for some $\delta \in (0, \delta_n)$. Under \ref{as:QMD}, \ref{as:hessian}, and \ref{as:holder_inequality}, the conditional variance of log-likelihood ratio under $G$ is given as follows: 
    \begin{align*}
        &\left|\|h\|^{-2}_{I_{G}}\mathrm{Var}_G\left[\log \frac{p_{\theta_G+h}}{p_{\theta_G}}\mid D_1 \right]-1\right| \le r_n(\delta)
    \end{align*}
    where 
    \begin{equation}\label{eq:rn-def}
        \begin{aligned}
            r_n(h) &=\left(\frac{4\phi_G(\delta)}{\delta^2}\right)^{1/2}+ \frac{4\phi_G(\delta)}{\delta^2} + \frac{\delta^2}{4}\left(1 + \frac{4\omega_{G}^2(\delta)}{\delta^4}+\frac{4\omega_{G}(\delta)}{\delta^2}\right)\\
    &\qquad + 2^{1/p}C_p^{1/p}\left(1 + \frac{2\phi_G^{1/2}(\delta)}{\delta^2}\right) \left( \max\left\{L_{n,G}^2, 1\right\}\nu_{G,p}^{2}(\delta)\right)^{1/2}\\
    &\qquad+ 2^{2/p}C_p^{2/p} \max\left\{L_{n,G}^2, 1\right\}\nu_{G,p}^{2}(\delta)
        \end{aligned}
    \end{equation}
    for some constant $C_{p}$ only depending on $p$ corresponding to the one in \ref{as:holder_inequality}.
\end{lemma}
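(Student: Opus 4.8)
The plan is to linearize the log-likelihood ratio $\xi := \log(p_{\theta_G+h}/p_{\theta_G})$ around its score term $h^\top\dot\ell_G$ and to reduce the variance claim to an $L^2(G)$ estimate of the resulting remainder. Since $\theta_G$ solves the KL-projection \eqref{eq:optimization}, the first-order optimality condition gives $\E_G[\dot\ell_G]=0$, so $\mathrm{Var}_G[h^\top\dot\ell_G\mid D_1]=h^\top I_G h=\|h\|_{I_G}^2$. Writing $\xi=h^\top\dot\ell_G+R$ with $R:=\xi-h^\top\dot\ell_G$, I would expand
\[\mathrm{Var}_G[\xi\mid D_1]=\|h\|_{I_G}^2+2\,\E_G[(h^\top\dot\ell_G)R\mid D_1]+\mathrm{Var}_G[R\mid D_1],\]
use $\E_G[h^\top\dot\ell_G\mid D_1]=0$ together with Cauchy--Schwarz to get $|\E_G[(h^\top\dot\ell_G)R\mid D_1]|\le\|h\|_{I_G}\,\E_G[R^2\mid D_1]^{1/2}$, and bound $\mathrm{Var}_G[R\mid D_1]\le\E_G[R^2\mid D_1]$. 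Dividing by $\|h\|_{I_G}^2$ then reduces the whole statement to an upper bound on $\E_G[R^2\mid D_1]^{1/2}/\|h\|_{I_G}$, whose first and second powers reproduce the ``linear'' and ``quadratic'' groupings seen in $r_n(\delta)$.

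To bound $\|R\|_{L^2(G)}$ I would pass to the square-root ratio $s:=p_{\theta_G+h}^{1/2}/p_{\theta_G}^{1/2}$, so that $\xi=2\log s$, and split $R=2\rho+R_1$ with $\rho:=s-1-\tfrac12 h^\top\dot\ell_G$ the QMD residual and $R_1:=2[\log s-(s-1)]$ the gap between the log-likelihood and twice the Hellinger-type increment. Assumption \ref{as:QMD} controls the first piece directly, $\|\rho\|_{L^2(G)}\le\phi_G(\delta)^{1/2}$, which (through the cross term and through $\E_G[R^2]$) yields the $(\phi_G/\delta^2)^{1/2}$ and $\phi_G/\delta^2$ contributions. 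The piece $R_1$ is the delicate one: one always has $\log s-(s-1)\le 0$, and on $\{s\ge 1\}$ the quadratic control $0\le(s-1)-\log s\le\tfrac12(s-1)^2$ holds, so there $|R_1|$ is dominated by $(s-1)^2$; on $\{s<1\}$, where $\log$ is unbounded, no such quadratic bound exists and $R_1$ must instead be absorbed by $|\xi|$ itself. Combining the two regions by Hölder's inequality, paired with the $L^p$-control of $s-1-\tfrac12 h^\top\dot\ell_G$ and the $\nu_{G,p}$-moment control of $\xi$ supplied by \ref{as:holder_inequality}, is exactly what produces the terms $(\max\{L_{n,G}^2,1\}\nu_{G,p}^2(\delta))^{1/2}$ and $\max\{L_{n,G}^2,1\}\nu_{G,p}^2(\delta)$ together with the combinatorial constants $2^{1/p}C_p^{1/p}$ and $2^{2/p}C_p^{2/p}$.

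It remains to account for the genuinely quadratic-in-$h$ content of $R$, which is of order $\|h\|^2$ and carries the deterministic bias. Using \ref{as:hessian} to replace the relevant conditional expectation by $-\tfrac12\|h\|_{V_G}^2$ up to an $\omega_G$-error, and invoking the hypotheses $\|h\|_{I_G}\le\delta$ and $\|h\|_{V_G}^2/\|h\|_{I_G}\le\delta$ to normalize $\|h\|_{V_G}^2$-quantities by $\|h\|_{I_G}^2$, delivers the $\tfrac{\delta^2}{4}(1+4\omega_G^2/\delta^4+4\omega_G/\delta^2)$ grouping; collecting the three families of estimates through the triangle inequality gives the stated $r_n(\delta)$. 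The main obstacle is precisely the $L^2$ control of $R_1$: because $\log s$ blows up as $s\to0$, there is no uniform second-order Taylor remainder, and one is forced to trade integrability via Hölder and the higher-moment hypothesis \ref{as:holder_inequality}; carrying this out with the correct exponents and verifying that the $\{s<1\}$ region contributes only at lower order, so that the quadratic regime dominates, is where essentially all of the technical work, and the unusual form of $r_n(\delta)$, resides.
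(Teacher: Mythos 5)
Your overall architecture is sound and close in spirit to the paper's: both reduce the problem to (i) a QMD-controlled Hellinger-type residual and (ii) an $L^2$ bound on the gap between $\log s$ and $s-1$, handled by H\"older with the moments from \ref{as:holder_inequality}. But there are two genuine gaps. First, your opening step, $\mathrm{Var}_G[h^\top\dot\ell_G\mid D_1]=\|h\|_{I_G}^2$ via ``first-order optimality gives $\E_G[\dot\ell_G]=0$,'' is not justified under the stated assumptions: $\dot\ell_G$ is defined only through the quadratic-mean expansion \eqref{eq:dqm}, not as a pointwise gradient of $\theta\mapsto\E_G[\log p_\theta]$, and under misspecification the identity $\E_G[\dot\ell_G]=0$ does not follow from the KL-projection property alone (it can be \emph{derived} by combining \ref{as:QMD}, \ref{as:hessian} and \ref{as:holder_inequality} and letting $\delta_n\to0$, but that is a separate argument you have not supplied). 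The paper sidesteps this entirely by writing $\mathrm{Var}_G[\xi\mid D_1]=\E_G[\xi^2\mid D_1]-(\E_G[\xi\mid D_1])^2$, computing the second moment through $\xi=2(W+g(W))$ with $W=s-1$, and using \ref{as:hessian} only for the squared mean; note $I_G$ is a second moment by definition, so no centering of the score is ever needed. If you keep your decomposition but do not assume $\E_G[\dot\ell_G]=0$, the term $(\E_G[h^\top\dot\ell_G\mid D_1])^2/\|h\|_{I_G}^2$ reappears and must be controlled through \ref{as:hessian}, which is essentially how the $\tfrac{\delta^2}{4}(1+4\omega_G^2/\delta^4+4\omega_G/\delta^2)$ grouping arises.

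The second and more serious gap is the step you yourself flag as carrying ``essentially all of the technical work'': the $L^2$ control of $R_1=2(\log s-(s-1))$. As sketched, neither region works. On $\{s\ge1\}$, dominating $|R_1|$ by $(s-1)^2$ forces you to bound $\E_G[W^4\mid D_1]$, which \ref{as:holder_inequality} does not provide when $p<4$ (it only controls $\E_G[|W|^p\mid D_1]$ via $L_{n,G}$, with $p\ge2$ arbitrary). On $\{s<1\}$, ``absorbing $R_1$ into $|\xi|$'' destroys the estimate: $\E_G[\xi^2\,1\{s<1\}\mid D_1]$ is of order $\|h\|_{I_G}^2$, i.e.\ the same order as the leading term, so it cannot serve as an error term after dividing by $\|h\|_{I_G}^2$. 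What makes the lemma work is the \emph{product} structure, valid uniformly in $x>-1$:
\begin{align*}
    |\log(1+x)-x| \le |x|\,|\log(1+x)|, \qquad \text{so} \qquad |R_1| \le |W|\,|\xi|,
\end{align*}
followed by a single application of H\"older with exponents $p/2$ and $p/(p-2)$, pairing $\E_G[|W|^p\mid D_1]$ (bounded by $C_p\max\{L_{n,G}^p,1\}\|h\|_{I_G}^p$ via the triangle inequality and \ref{as:holder_inequality}) against $\E_G[|\xi|^{2p/(p-2)}\mid D_1]$ (bounded by $\nu_{G,p}$). This is exactly the paper's route, and it is what produces the $2^{1/p}C_p^{1/p}(\cdots)\nu_{G,p}$ and $2^{2/p}C_p^{2/p}(\cdots)\nu_{G,p}^2$ terms; your region-splitting, as stated, does not reach them.
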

\begin{proof}[\bfseries{Proof of Lemma~\ref{lemma:variance}}]
    Let $\xi := \log p_{\theta_G +h}(Z) - \log p_{\theta_G}(Z)$ where $Z$ follows $G$. First, we obtain
\begin{align*}
    \|h\|^{-2}_{I_{G}}\mathrm{Var}_G[\xi|D_1] &= \|h\|^{-2}_{I_{G}}\E_G[\xi^2|D_1] - \|h\|^{-2}_{I_{G}}\left(\E_G[\xi|D_1]\right)^2
\end{align*}
and \ref{as:hessian} as well as the assumption $\|h\|_{V_G} \le \delta$ imply that 
\begin{align*}
    &\left(\E_G[\xi|D_1]\right)^2 = \left(\E_G[\xi|D_1]+\frac{1}{2} h^\top  V_{G} h - \frac{1}{2} h^\top  V_{G} h\right)^2\\
    &\quad\quad\quad =\left\{\left(\E_G[\xi|D_1]+\frac{1}{2} h^\top  V_{G} h\right)^2 -h^\top  V_{G} h\left(\E_G[\xi|D_1]+\frac{1}{2} h^\top  V_{G} h\right)+ \frac{1}{4} \|h\|_{V_{G}}^4\right\}\\
    & \Longleftrightarrow \left|\frac{4\left(\E_G[\xi|D_1]\right)^2}{\|h\|_{V_{G}}^4}-1\right| \le \frac{4\omega_G^2(\delta)}{\delta^4}+\frac{4\omega_G(\delta)}{\delta^2}.
\end{align*}
We now analyze the non-central second moment. We let $W = (p_{\theta_G +h}/p_{\theta_G})^{1/2}-1$. Then by the second-order Taylor expansion with an integral remainder term, we obtain 
\begin{align}\label{eq:g-remainder}
    \log(x+1) - x = -\int_{0}^x \frac{(x-t)}{(t+1)^2}\, dt = g(x).
\end{align}
Using this upper bound, we obtain 
\begin{align*}
    \E_G[\xi^2|D_1] = \E_G\left[\left(\log \frac{p_{\theta_G +h}}{p_{\theta_G}}\right)^2\bigg|D_1\right] &= 4\E_G\left[\left\{\log \left(\sqrt{\frac{p_{\theta_G +h}}{p_{\theta_G}}}-1+1\right)\right\}^2\bigg|D_1\right]\\
    &= 4\E_G\left[\left\{\log \left(W+1\right)-W+W\right\}^2\bigg|D_1\right]\\
    &= 4\E_G\left[\left(W+g(W)\right)^2|D_1\right]\\
    &= 4\E_G\left[W^2 + 2W g(W) + g(W)^2|D_1\right].
\end{align*}
We now analyze the leading term $\E_G[W^2|D_1]$. It first follows that 
\begin{align*}
    \E_G[W^2|D_1] &= \E_G\left[ \frac{\big(p^{1/2}_{\theta_G +h}-p^{1/2}_{\theta_G}\big)^2}{p_{\theta_G}} \bigg| D_1\right] \\
    &= \E_G \left[\frac{\left(p^{1/2}_{\theta_G +h}-p^{1/2}_{\theta_G} - \frac{1}{2}h^\top \dot\ell_{G}p^{1/2}_{\theta_G}+\frac{1}{2}h^\top \dot\ell_{G}p^{1/2}_{\theta_G}\right)^2}{p_{\theta_G}} \bigg|D_1\right] \\
    &= \E_G\left[ \frac{\left(p^{1/2}_{\theta_G +h}-p^{1/2}_{\theta_G} - \frac{1}{2}h^\top \dot\ell_{G}p^{1/2}_{\theta_G}\right)^2}{p_{\theta_G}} \bigg|D_1\right]\\
    &\qquad + 2\E_G\left[\int \frac{\left(p^{1/2}_{\theta_G +h}-p^{1/2}_{\theta_G} - \frac{1}{2}h^\top \dot\ell_{G}p^{1/2}_{\theta_G}\right)\left(\frac{1}{2}h^\top \dot\ell_{G}p^{1/2}_{\theta_G}\right)}{p_{\theta_G}} \bigg|D_1\right] + \frac{1}{4} \|h\|^2_{I_{G}}.
\end{align*}
Since $\|h\|_{I_G} \le \delta$, the first term is bounded by $\phi_G(\delta)$ under \ref{as:QMD}. The second term can be bounded by the Cauchy-Schwartz inequality as well as \ref{as:QMD}, which implies
\begin{align*}
    &\left|
    \E_G\left[\frac{\left(p^{1/2}_{\theta_G +h}-p^{1/2}_{\theta_G} - \frac{1}{2}h^\top \dot\ell_{G}p^{1/2}_{\theta_G}\right)\left(\frac{1}{2}h^\top \dot\ell_{G}p^{1/2}_{\theta_G}\right)}{p_{\theta_G}} \bigg|D_1\right]\right|\\
    &\qquad \le \frac{1}{2}\left(\E_G\left[ \frac{\left(p^{1/2}_{\theta_G +h}-p^{1/2}_{\theta_G} - \frac{1}{2}h^\top \dot\ell_{G}p^{1/2}_{\theta_G}\right)^2}{p_{\theta_G}} \bigg|D_1\right] \right)^{1/2}\|h\|_{I_{G}}= \frac{1}{2}\phi_G^{1/2}(\|h\|_{I_{G}})\|h\|_{I_{G}}.
\end{align*}
Using the results thus far, we have shown that 
\begin{align*}
   \left|\frac{4\E_G[W^2|D_1]}{\|h\|^2_{I_{G}}} - 1\right| \le  \left(\frac{16\phi_G(\delta)}{\delta^2}\right)^{1/2}+ \frac{4\phi_G(\delta)}{\delta^2}.
\end{align*}
Combining the earlier results, we have established 
\begin{align*}
    \left|\frac{\mathrm{Var}_G[\xi|D_1]}{\|h\|^2_{I_{G}}} -1\right| &\le \left(\frac{16\phi_G(\delta)}{\delta^2}\right)^{1/2}+ \frac{4\phi_G(\delta)}{\delta^2} \\
    &\qquad+ \frac{\|h\|_{V_{G}}^4}{4\|h\|^2_{I_{G}}}\left(1 + \frac{4\omega_G^2(\delta)}{\delta^4}+\frac{4\omega_G(\delta)}{\delta^2}\right)\\
    &\qquad + |\|h\|^{-2}_{I_{G}}\E_G\left[2W g(W) + g(W)^2 | D_1\right]|.
\end{align*}
We also recall that $\|h\|_{V_{G}}^4/\|h\|^2_{I_{G}} \le \delta^2$ is assumed. It remains to analyze the last term involving $g(W)$. First, we observe
\begin{align*}
    &|\|h\|^{-2}_{I_{G}}\E_G\left[2W g(W) + g(W)^2 | D_1\right]| \\
    % &\qquad = \|h\|^{-2}_{I_{G}}|\E_G\left[2W g(W) | D_1\right] + \E_G\left[g(W)^2 | D_1\right]|\\
    &\qquad\le \|h\|^{-2}_{I_{G}}\left(\E_G\left[4W^2| D_1\right] \E_G\left[ g^2(W) | D_1\right] \right)^{1/2}+ \|h\|^{-2}_{I_{G}}\E_G\left[g(W)^2 | D_1\right]
\end{align*}
by the Cauchy-Schwartz inequality. Next, we observe that 
\begin{align}\label{eq:g-squared-bound}
    g(x)^2 = \left|\int_{0}^x \frac{(x-t)}{(t+1)}\, dt\right|^2 \le \left|\int_{0}^x \frac{x}{(t+1)}\, dt\right|^2= x^2 \log^2(1+x),
\end{align}
and hence we obtain for $p \ge 2$, corresponding to the one in \ref{as:holder_inequality},
\begin{align*}
    \E_G\left[g(W)^2 | D_1\right] &= \E_G\left[W^2 \log^2(1+W) | D_1\right] \\
    &\le \big(\E_G\left[|W|^{p} |D_1\right]\big)^{2/p}\big(\E_G\left[|\log(1+W)|^{(2p)/(p-2)} | D_1\right] \big)^{(p-2)/p}\\
    &= \frac{1}{4}\big(\E_G\left[|W|^{p} |D_1\right]\big)^{2/p}\left(\E_G\left[\left|\log\frac{p_{\theta_G+h}}{p_{\theta_G}}\right|^{(2p)/(p-2)}| D_1\right] \right)^{(p-2)/p}
\end{align*}
by the H\"{o}lder's inequality. We further observe that for some constant $C_p$, only depending on $p$ such that,
\begin{equation}\label{eq:Wp-derivation}
    \begin{aligned}
        \E_G\left[|W|^{p} |D_1\right] &= \E_G\left[\left|\sqrt{\frac{p_{\theta_G+h}}{p_{\theta_G}}}-1 + \frac{1}{2}h^\top \dot\ell_{G}-\frac{1}{2}h^\top \dot\ell_{G}\right|^{p} |D_1\right]\\
    &\le C_p \left\{\E_G\left[\left|\sqrt{\frac{p_{\theta_G+h}}{p_{\theta_G}}}-1+\frac{1}{2}h^\top \dot\ell_{G}\right|^{p}|D_1\right] + \E_G\left[\left|\frac{1}{2}h^\top \dot\ell_{G}\right|^{p} |D_1\right]\right\}\\
    &\le C_p \left\{L_{n,G}^p\|h\|_{I_G}^{p} + 2^{-p}\|h\|_{I_G} ^{p}\right\}\le 2C_p \max\left\{L_{n,G}^p, 1\right\}\|h\|_{I_G}^{p}
    \end{aligned}
\end{equation}
where we used \ref{as:holder_inequality}. Hence we arrive 
\begin{align}\label{eq:g-squared}
    \E_G\left[g(W)^2 | D_1\right] \le 2^{2/p}C_p^{2/p} \max\left\{L_{n,G}^2, 1\right\}\|h\|_{I_G}^{2}\nu_{G,p}^{2}(\delta)
\end{align}
under the assumption that assumption $\|h\|_{I_G} \le \delta$.
Putting together, 
\begin{align*}
    &|\|h\|^{-2}_{I_{G}}\E_G\left[2W g(W) + g(W)^2 | D_1\right]| \\
    &\qquad \le 2^{1/p}C_p^{1/p}\left(1 + \frac{2\phi_G^{1/2}(\delta)}{\delta}\right) \left( \max\left\{L_{n,G}^2, 1\right\}\nu_{G,p}^{2}(\delta)\right)^{1/2}\\
    &\qquad\qquad+ 2^{2/p}C_p^{2/p} \max\left\{L_{n,G}^2, 1\right\}\nu_{G,p}^{2}(\delta)
\end{align*}
for some constant $C_{p}$ only depending on $p$. This concludes the claim.
\end{proof}
\begin{lemma}\label{lemma:KS-dist}
Assume \ref{as:QMD}---\ref{as:ULC} hold for all $G \in \mathcal{G}$. Then, it holds that 
\begin{align*}
    \limsup_{n\to \infty}\,\sup_{G \in \mathcal{G}}\,  \E_G[\Delta_{n, G}^{\mathrm{BE}}] = 0.
\end{align*}
\end{lemma}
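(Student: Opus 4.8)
The plan is to condition on $D_1$, reduce $\Delta_{n,G}^{\mathrm{BE}}$ to a conditional Lindeberg ratio via a Katz-type Berry--Esseen inequality, and then transfer that ratio from the full log-likelihood increment onto its linear score surrogate, where assumption \ref{as:ULC} applies directly. Fix $D_1$ and write $\widehat\theta_1 = \theta_G + h$, so that $h$ is $D_1$-measurable. Conditionally on $D_1$, the standardized summands $Y_i := \sigma_{\theta_G,\theta_G+h}^{-1}(\xi(Z_i) - \E_G[\xi\mid D_1])$, $i\in D_2$, are IID, mean-zero, and of unit variance, and $\mathbb{G}^\times_n[\sigma_{\theta_G,\theta_G+h}^{-1}\xi]$ is their normalized sum. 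I would invoke the Berry--Esseen bound in the Lindeberg form of \citet{katz1963note}, which gives, for an absolute constant $C$,
\[
\Delta_{n,G}^{\mathrm{BE}} \;\le\; C\,\E_G\!\left[Y^2\min\!\left\{1,\tfrac{|Y|}{\sqrt n}\right\}\,\Big|\,D_1\right],
\qquad Y := \sigma_{\theta_G,\theta_G+h}^{-1}\big(\xi - \E_G[\xi\mid D_1]\big),
\]
whose right-hand side is exactly the conditional truncated second moment appearing in the ULC integrand at truncation level $\kappa=\sqrt n$ (the population standardization is what makes this clean, since sample-variance estimation is handled separately elsewhere).

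Next I would transfer this quantity onto the score. Restricting to the event $1_{\delta_n}(h)=1$, set $\tilde Y := \langle h,\dot\ell_{G}\rangle/\|h\|_{I_{G}}$, the standardized score in the random direction $h$, and $R := Y - \tilde Y$. A two-regime comparison (splitting on whether $|R|\le|\tilde Y|$) yields the pointwise inequality
\[
Y^2\min\!\left\{1,\tfrac{|Y|}{\sqrt n}\right\} \;\le\; 8\,\tilde Y^2\min\!\left\{1,\tfrac{|\tilde Y|}{\sqrt n}\right\} + 4R^2 ,
\]
which decouples the truncation level from the approximation error. Taking conditional expectations, the first term is dominated by $8\,\Psi_G(\sqrt n, h)$, where $\Psi_G(\kappa,t):=\E_G[\langle t,\dot\ell_{G}\rangle^2\|\langle t,\dot\ell_{G}\rangle\|_{2}^{-2}\min\{1,|\langle t,\dot\ell_{G}\rangle|/(\kappa\|\langle t,\dot\ell_{G}\rangle\|_{2})\}]$ is the ULC integrand (using $\|\langle h,\dot\ell_{G}\rangle\|_{2}=\|h\|_{I_{G}}$). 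The second term is the conditional $L^2$-norm of $R$; since $\dot\ell_{G}$ is $G$-centered, the recentering $\E_G[\xi\mid D_1]$ acts only on the linear-expansion remainder $\xi-\langle h,\dot\ell_{G}\rangle$ and, as centering only decreases the $L^2$ norm, I would bound $\E_G[R^2\mid D_1]$ by combining the variance ratio $|\sigma_{\theta_G,\theta_G+h}^2/\|h\|_{I_{G}}^2-1|\le r_n(\delta_n)$ from Lemma~\ref{lemma:variance} with the quadratic-mean-differentiability control of $\|\xi-\langle h,\dot\ell_{G}\rangle\|_{2}$ from \ref{as:QMD}. This bound is uniform over all directions of $h$ and over $\|h\|_{I_{G}}\le\delta_n$.

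Finally I would integrate over $D_1$. On the complement $\{1_{\delta_n}(h)=0\}$ I use $\Delta_{n,G}^{\mathrm{BE}}\le 1$, whose probability is $\tau_{n,G}^{\mathrm{C}}(\delta_n)$. Assembling the pieces gives
\[
\sup_{G\in\mathcal{G}}\E_G[\Delta_{n,G}^{\mathrm{BE}}] \;\le\; \sup_{G\in\mathcal{G}}\tau_{n,G}^{\mathrm{C}}(\delta_n) + 8C\sup_{G\in\mathcal{G}}\sup_{t\in\mathbb{R}^d}\Psi_G(\sqrt n,t) + 4C\sup_{G\in\mathcal{G}}\sup_{\|h\|_{I_{G}}\le\delta_n}\E_G[R^2\mid D_1].
\]
Taking $\limsup_{n\to\infty}$: the first term vanishes by consistency (\ref{as:initial_estimator}); the second vanishes because $\sqrt n\to\infty$ and \ref{as:ULC} forces uniform decay of $\Psi_G$ as the truncation level grows; and the third vanishes because $\delta_n\to 0$ drives the relative QMD modulus $\phi_G(\delta_n)/\delta_n^2$ (and the higher-order pieces in $r_n$) to $0$ uniformly over $\mathcal{G}$. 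This yields the claim.

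I expect the main obstacle to be the transfer step: producing the Berry--Esseen bound in exactly the truncated-moment form that matches the ULC, and then controlling $\E_G[R^2\mid D_1]$ uniformly over $\mathcal{G}$ and over all random directions of $h$. The decoupling inequality is precisely what dissolves the naive tension between the growing truncation threshold $\sqrt n$ and the only-polynomially-shrinking linear-expansion error; a direct Lipschitz comparison of the truncated-moment functionals would instead carry a factor of order $\sqrt n$ and fail to close. The remaining uniformity rests entirely on the supremum-over-directions structure built into \ref{as:ULC}, which is exactly why that assumption is phrased as a \emph{uniform} Lindeberg condition rather than pointwise in $t$.
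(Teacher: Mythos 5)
Your proposal is correct and follows the same skeleton as the paper's proof: condition on $D_1$, apply the Katz truncated-moment Berry--Esseen bound on the consistency event (with the trivial bound $1$ and the tail probability $\tau^{\mathrm{C}}_{n,G}(\delta_n)$ off that event), transfer the truncated second moment from the standardized log-likelihood increment to the standardized score $\langle h,\dot\ell_{G}\rangle/\|h\|_{I_{G}}$ so that \ref{as:ULC} applies at truncation level $\sqrt n$, and absorb the discrepancy into an $L^2$ remainder. The one structural difference is in how the transfer is done: the paper invokes Proposition 1 of \citet{takatsu2025bridging} as a black box, whose hypothesis is precisely the $L^2$-ratio condition you verify, whereas you make the transfer explicit through the pointwise decoupling inequality $Y^2\min\{1,|Y|/\sqrt n\}\le 8\,\tilde Y^2\min\{1,|\tilde Y|/\sqrt n\}+4R^2$ (valid by splitting on $|R|\le|\tilde Y|$ versus $|R|>|\tilde Y|$), which makes the argument self-contained and correctly dissolves the tension between the growing truncation level and the shrinking remainder. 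Two bookkeeping corrections are needed to close your draft. First, the restriction must be to the full event $\max\{\|h\|_{I_{G}},\|h\|_{V_{G}},\|h\|^2_{V_{G}}/\|h\|_{I_{G}}\}\le\delta_n$, as in the paper's indicator $1_\delta(h)$, not merely $\|h\|_{I_{G}}\le\delta_n$: bounding $\mathbb{E}_G[R^2\mid D_1]$ produces terms involving $\omega_{G}(\|h\|_{V_{G}})$ and $\|h\|^4_{V_{G}}/\|h\|^2_{I_{G}}$ that require the other two constraints. Second, \ref{as:QMD} alone does not control $\|\xi-\langle h,\dot\ell_{G}\rangle\|_2$: QMD bounds the Hellinger-type remainder $(p_{\theta_G+h}/p_{\theta_G})^{1/2}-1-\tfrac12 h^\top\dot\ell_{G}$, and passing from square-root densities to log-likelihoods incurs the second-order term $g(W)$ whose $L^2$ norm is controlled by the H\"older argument under \ref{as:holder_inequality} (the derivation leading to \eqref{eq:g-squared} in the paper, which also underlies your appeal to Lemma~\ref{lemma:variance}). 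Since \ref{as:holder_inequality} is among the assumed conditions, both are fixes of attribution rather than gaps, and your argument then matches the paper's bound term by term.
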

\begin{proof}[\bfseries{Proof of Lemma~\ref{lemma:KS-dist}}]
Let $\xi = \log p_{\widehat\theta_1}(Z) - \log p_{\theta_G}(Z)$ where $Z$ follows $G$.
Recall that the Kolmogorov-Smirnov distance for $G$ is defined as
\begin{align*}
    \Delta_{n, G}^{\mathrm{BE}} := \sup_{t\in\mathbb{R}}\left|\mathbb{P}_G(\mathbb{G}^\times_n [\sigma^{-1}_{\theta_G, \widehat\theta_1}\xi
    ] \le t) - \Phi(t)\right|.
\end{align*}

Now, we denote $\widehat\theta_1 = \theta_G + h$. Let $\{\delta_n\}_{n=1}^\infty$ be the sequence appearing in the definition of \ref{as:QMD}, \ref{as:hessian} and \ref{as:holder_inequality}. For any $\delta \in (0, \delta_n)$, we define the indicator function $h \mapsto 1_\delta(h)$ as in \eqref{eq:indicator1}. A classical result regarding the Berry-Esseen bound \citep{katz1963note} states that 
\begin{align}\label{eq:katz}
    \Delta_{n, G}^{\mathrm{BE}} \le \min\left\{1, C\E_G\left[\frac{|\xi-\E_G[\xi|D_1]|^2}{\mathrm{Var}_G[\xi|D_1]}\min\left\{1, \frac{|\xi-\E_G[\xi|D_1]|}{\mathrm{Var}^{1/2}_G[\xi|D_1]}\right\}\bigg|D_1\right] 1_\delta(h) + (1-1_\delta(h))\right\}
\end{align}
for some universal constant $C > 0$. Here, when the indicator function evaluates to zero, we are using the trivial upper bound of one. 

The upper bound to the right-hand term of \eqref{eq:katz} is provided by Proposition 1 of \citet{takatsu2025bridging}.  The proof thereby concludes as an application of Proposition 1. It remains to verify the condition of Proposition 1, which begins with finding a function $\varphi$ such that 
    \begin{align*}
        \frac{\E_G\big[\big\{(\log p_{\theta_G+h}-\log p_{\theta_G}) - \E_G[\log (p_{\theta_G+h}/p_{\theta_G})|D_1] - \langle h, \dot \ell_{G}(Z)\rangle\big\}^2|D_1\big]}{\E_G[\langle h, \dot \ell_{G}(Z)\rangle^2|D_1]} \le \varphi(\|h\|).
    \end{align*}
   Similarly to the proof of Lemma~\ref{lemma:variance}, we let $W = (p_{\widehat\theta_1}/p_{\theta_G})^{1/2}-1$. Then by the Taylor expansion of $\log(x+1) = x + g(x)$ where $g$ is defined in \eqref{eq:g-remainder}, we obtain 
\begin{align*}
    &\log \frac{p_{\widehat\theta_1}(Z_i)}{p_{\theta_G}(Z_i)} - \E_G\left[\log \frac{p_{\widehat\theta_1}}{p_{\theta_G}}\bigg|D_1\right]\\
    &\qquad = 2\log \left(\sqrt{\frac{p_{\widehat\theta_1}}{p_{\theta_G}}}(Z_i)-1+1\right) - \E_G\left[\log \frac{p_{\widehat\theta_1}}{p_{\theta_G}}\bigg|D_1\right]  = 2W_i + 2g(W_i)-\E_G\left[\log \frac{p_{\widehat\theta_1}}{p_{\theta_G}}\bigg|D_1\right].
\end{align*}
By letting $\widehat\theta_1 = \theta_G + h$, we obtain
\begin{align*}
    &\E_G \left[\left|\log \frac{p_{\theta_G+h}(Z_i)}{p_{\theta_G}(Z_i)} - \E_G\left[\log \frac{p_{\theta_G+h}}{p_{\theta}}\bigg|D_1\right] - h^\top \dot\ell_{G}\right|^2\bigg|D_1\right] \\
    &\qquad = \E_G \left[\left|2\left(W_i-\frac{1}{2} h^\top \dot\ell_{G}\right) + 2g(W_i) - \E_G\left[\log \frac{p_{\theta_G+h}}{p_{\theta_G}}\bigg|D_1\right]\right|^2\bigg|D_1\right]\\
    &\qquad \le C\left\{\E_G \left[\left|W_i-\frac{1}{2} h^\top \dot\ell_{G}\right|^2\bigg|D_1\right] + \E_G \left[g^2(W_i)|D_1\right]\right.\\
    &\left.\qquad \qquad + \left| \E_G\left[\log \frac{p_{\theta_G + h}}{p_{\theta_G}}\big |D_1\right] +\frac{1}{2} h^\top V_{G} h\right|^2 + \|h\|_{V_{G}}^4\right\} \\
    &\qquad \le C_p\left\{ \phi_G(\|h\|_{I_{G}}) + \max\left\{L_{n,G}^2, 1\right\}\|h\|_{I_{G}}^{2}\nu_{G,p}^{2}(\|h\|_{I_{G}})+\omega^2_G(\|h\|_{V_{G}})+ \|h\|_{V_{G}}^4\right\}
\end{align*}
for some constant $C_p$, only depending on $p$, where we use \ref{as:QMD}, \ref{as:hessian}, and the derivation leading up to \eqref{eq:g-squared}, which uses \ref{as:holder_inequality}. We also use the fact that we are only analyzing the case when $1_\delta(h) =1$. Putting together, we obtain 
\begin{align*}
     &\frac{\E_G [|\log p_{\theta_G+h}(Z_i) -\log p_{\theta_G}(Z_i) - \E_G[\log (p_{\theta_G+h}/p_{\theta_G})] - h^\top \dot\ell_{G}|^2]}{\E_G [|h^\top \dot\ell_{G}|^2]} 1_\delta(h)\\
     &\qquad \le C_p\left(\frac{\phi_G(\delta)}{\delta^2} + \max\left\{L_{n,G}^2, 1\right\}\nu_{G,p}^{2}(\delta)+\frac{\delta^2\omega^2_G(\delta)}{\delta^4} + \delta^2\right)1_\delta(h).
\end{align*}
Finally, by applying Proposition 1 of \citet{takatsu2025bridging} to the upper bound of \eqref{eq:katz}, we conclude that for any $\delta \in (0,\delta_n)$
\begin{align*}
    \sup_{G \in \mathcal{G}}\,\E_G[\Delta_{n, G}^{\mathrm{BE}}] & \lesssim \sup_{G \in \mathcal{G}}\,\frac{\phi_G(\delta)}{\delta^2} +\sup_{G \in \mathcal{G}}\, \max\left\{L_{n,G}^2, 1\right\}\nu_{G,p}^{2}(\delta)+ \delta^2\left(1 + \sup_{G \in \mathcal{G}}\,\frac{\omega^2_G(\delta)}{\delta^4}\right)  \\
    &\quad + \sup_{G \in \mathcal{G}}\,\mathbb{P}_G\left(\,\max\left\{\|\widehat\theta_1- \theta_G\|_{I_{G}}, \|\widehat\theta_1- \theta_G\|_{V_{G}}, \frac{\|\widehat\theta_1 - \theta_G\|^2_{V_{G}}}{\|\widehat\theta_1 - \theta_G\|_{I_{G}}}\right\}> \delta \right)\\
    &\quad +\sup_{G \in \mathcal{G}}\, \sup_{t \in \mathbb{R}^d}\,  \E_G\left[\frac{\langle t, \dot\ell_{G}\rangle^2}{\|\langle t,\dot\ell_{G} \rangle\|_{2}^2} \min\left\{1, \frac{|\langle t,\dot\ell_{G}\rangle|}{n^{1/2} \|\langle t, \dot\ell_{G}\rangle\|_{2}}\right\}\right].
\end{align*}
Finally, the last display converges zero as $n \to \infty$ assuming \ref{as:QMD}---\ref{as:ULC}. This concludes the claim.
\end{proof}
\begin{lemma}\label{lemma:var-consist}
Let $\{\delta_n\}_{n=1}^\infty$ be the sequence appearing in the definition of \ref{as:QMD}, \ref{as:hessian}, \ref{as:holder_inequality} and \ref{as:fourth-moment}. For $h \in \Theta$, assume $\max\left\{\|h\|_{I_{G}}, \|h\|_{V_{G}}, \|h\|^2_{V_{G}}/\|h\|_{I_{G}}\right\}\le \delta$ for some $\delta \in (0, \delta_n)$. Assume \ref{as:QMD}, \ref{as:hessian}, \ref{as:holder_inequality} and \ref{as:fourth-moment}. For any $\varepsilon > 0$, there exists a constant $C_\varepsilon$ only depending on $\varepsilon$ such that 
\begin{align*}
    &\left|\frac{\widehat\sigma^2_{\theta_G,  \theta_G+h}}{\|h\|^2_{I_{G}}}-1\right| \le C_{\varepsilon}\left(\frac{\nu_{G,4}^2(\delta)}{n^{1/2}\delta^2} +r_n(\delta)\right)
\end{align*}
with probability greater than $1-\varepsilon$, conditioning on $D_1$, where $h \mapsto r_n(h)$ is defined in \eqref{eq:rn-def}. Furthermore, 
\begin{align*}
    &\E_G\left[\left|\frac{\widehat\sigma^2_{\theta_G,\theta_G+h}}{\|h\|^2_{I_{G}}}-1\right|\right]  \le \sqrt{2}\left(\frac{\nu_{G,4}^2(\delta)}{n^{1/2}\delta^2} +r_n(\delta)\right).
\end{align*}
\end{lemma}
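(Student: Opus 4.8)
The plan is to split the studentized ratio into a \emph{sampling-fluctuation} term and a \emph{population-bias} term and to bound each with a separate tool. Throughout I condition on $D_1$, so that $h = \widehat\theta_1 - \theta_G$ is deterministic and the summands $\xi_i = \log p_{\theta_G+h}(Z_i) - \log p_{\theta_G}(Z_i)$, $i \in D_2$, are i.i.d.\ with conditional variance $\sigma^2_{\theta_G,\theta_G+h} = \mathrm{Var}_G[\xi \mid D_1]$. The decomposition is
\begin{align*}
\frac{\widehat\sigma^2_{\theta_G,\theta_G+h}}{\|h\|^2_{I_{G}}} - 1 = \frac{\widehat\sigma^2_{\theta_G,\theta_G+h} - \sigma^2_{\theta_G,\theta_G+h}}{\|h\|^2_{I_{G}}} + \left(\frac{\sigma^2_{\theta_G,\theta_G+h}}{\|h\|^2_{I_{G}}} - 1\right).
\end{align*}
The second, deterministic term is exactly what Lemma~\ref{lemma:variance} controls: under \ref{as:QMD}, \ref{as:hessian}, \ref{as:holder_inequality} and the hypothesis $\max\{\|h\|_{I_G},\|h\|_{V_G},\|h\|^2_{V_G}/\|h\|_{I_G}\}\le\delta$, its absolute value is at most $r_n(\delta)$.

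For the first term I would invoke the standard identity for the sample variance of i.i.d.\ data, namely that $\widehat\sigma^2_{\theta_G,\theta_G+h}$ is conditionally unbiased for $\sigma^2_{\theta_G,\theta_G+h}$ with conditional variance of order $n^{-1}$ times the fourth \emph{central} moment of $\xi$. That central moment is bounded, up to a universal constant (via Jensen's inequality, which absorbs the recentering by $\E_G[\xi\mid D_1]$), by $\E_G[\xi^4 \mid D_1]$, which in turn is at most $\nu_{G,4}^4(\delta)$ by \ref{as:fourth-moment} since $\|h\|_{I_G}\le\delta$. Hence, by Jensen's (or Cauchy--Schwarz) inequality,
\begin{align*}
\E_G\!\left[\left|\frac{\widehat\sigma^2_{\theta_G,\theta_G+h}-\sigma^2_{\theta_G,\theta_G+h}}{\|h\|^2_{I_{G}}}\right|\,\bigg|\,D_1\right] \le \frac{\big(\mathrm{Var}_G[\widehat\sigma^2_{\theta_G,\theta_G+h}\mid D_1]\big)^{1/2}}{\|h\|^2_{I_{G}}} \lesssim \frac{\nu_{G,4}^2(\delta)}{n^{1/2}\|h\|^2_{I_{G}}}.
\end{align*}
Adding the bias term and bookkeeping the two contributions (e.g.\ bounding the $L^2$ norm of the whole ratio by $(\mathrm{variance}+\mathrm{bias}^2)^{1/2}$ and then by the sum of the two pieces) yields the claimed expectation bound with the factor $\sqrt 2$. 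The high-probability statement then follows immediately from the expectation bound by Markov's inequality: choosing the threshold equal to $\varepsilon^{-1}$ times the expectation gives tail probability $\varepsilon$, i.e.\ $C_\varepsilon = \sqrt 2/\varepsilon$.

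The main obstacle is producing the denominator $\delta^2$ (as stated) rather than the $\|h\|^2_{I_G}$ that the variance computation naturally returns, together with the exact constant. Since $\|h\|_{I_G}\le\delta$, replacing $\|h\|^2_{I_G}$ by $\delta^2$ in the denominator goes the wrong way for an upper bound, so this step must rely on evaluating the moduli consistently at the slack $\delta$ (mirroring the convention already used in Lemma~\ref{lemma:variance}) or on the fact that $\nu_{G,4}(\delta)\gtrsim\|h\|_{I_G}$, which makes $\nu_{G,4}^2(\delta)/\|h\|^2_{I_G}$ and $\nu_{G,4}^2(\delta)/\delta^2$ comparable. Nailing the sharp $\sqrt 2$ would in addition require the precise divisor-$(n-1)$ sample-variance variance identity and careful handling of the gap between the fourth central moment and $\E_G[\xi^4\mid D_1]$.
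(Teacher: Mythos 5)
Your proposal follows essentially the same route as the paper's own proof: the identical fluctuation-plus-bias decomposition, with the bias term handled by Lemma~\ref{lemma:variance}, the fluctuation term by the variance-of-the-sample-variance bound $n^{-1}\E_G\big[|\xi-\E_G[\xi\mid D_1]|^4\mid D_1\big]\lesssim n^{-1}\nu_{G,4}^4(\cdot)$ from \ref{as:fourth-moment}, the expectation bound via $(\mathrm{variance}+\mathrm{bias}^2)^{1/2}$ giving the $\sqrt{2}$ factor, and the high-probability statement via a Markov/Chebyshev step with $C_\varepsilon\propto 1/\varepsilon$. The one obstacle you flag --- that the computation naturally returns $\nu_{G,4}^2/\big(n^{1/2}\|h\|_{I_G}^2\big)$ while the statement has $\nu_{G,4}^2(\delta)/\big(n^{1/2}\delta^2\big)$, and $\|h\|_{I_G}\le\delta$ points the wrong way --- is present in the paper's proof as well, which ends with the bound in terms of $\nu_{G,4}^4(\|h\|_{I_G})/\big(n\|h\|_{I_G}^4\big)$ and passes to the stated form by exactly the convention you describe (evaluating moduli ratios at the slack $\delta$, as in Lemma~\ref{lemma:variance}), so this is not a gap in your argument relative to the paper's.
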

\begin{proof}[\bfseries{Proof of Lemma~\ref{lemma:var-consist}}]
First, by Markov inequality,
    \begin{align*}
\mathbb{P}_G\left(|\widehat\sigma^2_{\theta_G, \theta_G + h}/\|h\|^2_{I_{G}}-1| > \varepsilon | D_1\right) &= \mathbb{P}_G\left(|\widehat\sigma^2_{\theta_G, \theta_G + h}-\|h\|^2_{I_{G}}| > \varepsilon\|h\|^2_{I_{G}} | D_1\right) \\
&\le \frac{\sqrt{\E_G[|\widehat\sigma^2_{\theta_G, \theta_G + h}-\|h\|^2_{I_{G}}|^2 | D_1]}}{\varepsilon\|h\|^2_{I_{G}}}.
    \end{align*}
Since the sample variance is an unbiased estimator, 
\begin{align*}
   &\E_G[|\widehat\sigma^2_{\theta_G, \theta_G + h}-\sigma^2_{\theta_G, \theta_G + h}+\sigma^2_{\theta_G, \theta_G + h}-\|h\|^2_{I_{G}}|^2 | D_1] \\
   &\qquad\le 2\E_G[|\widehat\sigma^2_{\theta_G, \theta_G + h}-\E_G[\widehat\sigma^2_{\theta_G, \theta_G + h}| D_1]|^2 | D_1] + 2|\sigma^2_{\theta_G, \theta_G + h}-\|h\|^2_{I_{G}}|^2 \\
   &\qquad\le  2\mathrm{Var}_G[\widehat\sigma^2_{\theta_G, \theta_G + h} | D_1] + 2r^2_n(h)\|h\|^4_{I_{G}}
\end{align*}
where the last line uses Lemma~\ref{lemma:variance}, which uses \ref{as:QMD}, \ref{as:hessian} and \ref{as:holder_inequality} as well as that the indicator function $h \mapsto 1_\delta(h)$, defines in \eqref{eq:indicator1}, evaluates one. Here, $h\mapsto r_n(h)$ is defined in the statement of Lemma~\ref{lemma:variance}.
The variance of the sample variance is bounded such that 
\begin{align*}
    \mathrm{Var}_G[\widehat\sigma^2_{\theta_G, \theta_G+h} | D_1] \le \E_G[|\widehat\sigma^2_{\theta_G, \theta_G+h}-\sigma^2_{\theta_G, \theta_G+h}|^2 | D_1] \le \frac{\E_G[|\xi -\E_G[\xi| D_1]|^4 | D_1]}{n} 
\end{align*}
where $\xi = \log p_{\theta_G+h}(Z) - \log p_{\theta_G}(Z)$. Then, the following observation implies  
\begin{align*}
    &\E_G\left[\left|\log \frac{p_{\theta_G+h}(Z_i)}{p_{\theta_G}(Z_i)} - \E_G\left[\log \frac{p_{\theta_G+h}}{p_{\theta_G}}\bigg|D_1\right]\right|^4 \bigg| D_1\right] \\
    &\qquad \le 2\E_G\left[\left|\log \frac{p_{\theta_G+h}(Z_i)}{p_{\theta_G}(Z_i)}\right|^4\bigg| D_1\right]\le 2\nu_{G,4}^4(\|h\|_{I_{G}})
\end{align*}
where we use \ref{as:fourth-moment} as well as the assumption that $\|h\|_{I_G} \le \delta$ for some $\delta \in (0, \delta_n)$. This implies that 
\begin{align*}
    \mathbb{P}_G\left(|\widehat\sigma^2_{\theta_G, \theta_G+h}/\sigma^2_{\theta_G, \theta_G+h}-1| > \varepsilon | D_1\right) &\le \sqrt{\frac{2\nu_{G,4}^4(\|h\|_{I_{G}})}{n\varepsilon^2\|h\|_{I_{G}}^4}+\frac{2r^2(h)}{\varepsilon^2}}.
\end{align*}
Hence taking $\varepsilon^2 = C_\varepsilon (n^{-1}\|h\|_{I_{G}}^{-4}\nu_{G,4}^4(\|h\|_{I_{G}}) + r_n(\widehat\theta_1 - \theta_G))$, we conclude the first claim. The result in expectation is straightforward.
\end{proof}
We use the following results on the CDF for a Gaussian random variable. 
\begin{lemma}\label{lemma:gaussian-approx}
    Let $q > 0$ and $\Phi(\cdot)$ be the cumulative distribution function of the standard normal random variable. Then,
    \begin{align*}
        \sup_{t}|\Phi(qt)-\Phi(t)| \le \frac{1}{\sqrt{2\pi e}}\left(\max\left\{q,q^{-1}\right\}-1\right).
    \end{align*}
\end{lemma}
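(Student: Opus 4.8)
The plan is to reduce to the case $q \ge 1$ and then bound the difference of the two CDFs by a single integral of the normal density, whose relevant maximizer is explicit. Throughout I would write $\varphi(s) = (2\pi)^{-1/2} e^{-s^2/2}$ for the standard normal density, so that $\Phi' = \varphi$, reserving $\phi_G$ for the modulus appearing elsewhere.

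First I would exploit the symmetry of the quantity under $q \mapsto q^{-1}$. Substituting $s = qt$ gives $\sup_t |\Phi(qt) - \Phi(t)| = \sup_s |\Phi(s) - \Phi(q^{-1}s)|$, so it suffices to treat $q \ge 1$, in which case $\max\{q, q^{-1}\} = q$. Next, setting $f(t) := \Phi(qt) - \Phi(t)$ and using $\Phi(-x) = 1 - \Phi(x)$, one checks directly that $f(-t) = -f(t)$, so $f$ is odd and $\sup_t |f(t)| = \sup_{t \ge 0} f(t)$. This lets me restrict attention to $t \ge 0$ and drop the absolute value.

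For $t \ge 0$ and $q \ge 1$ I would write the difference as an integral, $f(t) = \int_t^{qt} \varphi(s)\, ds$. Since $\varphi$ is decreasing on $[0,\infty)$, the integrand is bounded by $\varphi(t)$ on the whole interval $[t, qt]$, yielding $f(t) \le (q-1)\, t\, \varphi(t)$. It then remains to maximize $g(t) := t\,\varphi(t)$ over $t \ge 0$; differentiating gives $g'(t) = (2\pi)^{-1/2} e^{-t^2/2}(1 - t^2)$, which vanishes on $[0,\infty)$ only at $t = 1$, where $g(1) = (2\pi e)^{-1/2}$. Hence $f(t) \le (q-1)(2\pi e)^{-1/2}$ uniformly in $t \ge 0$, which is exactly $(\max\{q, q^{-1}\} - 1)/\sqrt{2\pi e}$.

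There is no genuine obstacle here, as the argument is elementary calculus. The only mild bookkeeping subtlety is carrying the symmetry reduction cleanly so that the final bound is phrased in terms of $\max\{q, q^{-1}\}$ rather than $q$ alone, and confirming that the oddness of $f$ legitimately removes the absolute value before the monotonicity and optimization steps produce the constant $1/\sqrt{2\pi e}$.
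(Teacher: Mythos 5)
Your proof is correct, but it differs from the paper in an essential way: the paper does not prove this lemma at all, it simply cites Lemma 3 of \citet{korolev2017bounds} and imports the bound. Your argument is a complete, self-contained replacement. Each step checks out: the substitution $s = qt$ is a bijection of $\mathbb{R}$ for $q>0$, so the reduction to $q \ge 1$ is legitimate; the identity $\Phi(-x) = 1-\Phi(x)$ gives $f(-t) = -f(t)$, and for $q\ge 1$, $t\ge 0$ one has $f(t)\ge 0$, so dropping the absolute value is justified; the bound $\int_t^{qt}\varphi(s)\,ds \le (q-1)\,t\,\varphi(t)$ uses monotonicity of $\varphi$ on $[0,\infty)$ correctly; and the maximization of $t\varphi(t)$ at $t=1$ yields exactly the constant $(2\pi e)^{-1/2}$. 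What your approach buys is self-containedness — the lemma no longer rests on an external reference, and the argument exposes where the constant $1/\sqrt{2\pi e}$ comes from (the mode of $t\mapsto t\varphi(t)$), which also makes clear the bound is attained in the limit $q\to 1$ near $t=1$, i.e., it is essentially tight. What the citation buys the paper is brevity and access to the sharper or more general variants in Korolev--Shevtsova (their lemma is stated for scale mixtures and related quantities), but for the purposes of this paper your elementary derivation is fully adequate and arguably preferable.
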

\begin{proof}[\bfseries{Proof of Lemma~\ref{lemma:gaussian-approx}}]
    See Lemma 3 of \citet{korolev2017bounds}.
\end{proof}
\begin{lemma}\label{lemma:gaussian-approx2}
    Let $q \in [1-\varepsilon, 1+\varepsilon]$ such that $1-\varepsilon > 0$. and $\Phi(\cdot)$ be the cumulative distribution function of the standard normal random variable. Then,
    \begin{align*}
        \sup_{t_1, t_2}|\Phi(t_1 + qt_2)-\Phi(t_1 + t_2)| \le \min\left\{1, \frac{2\varepsilon}{\sqrt{2\pi e}}\right\}.
    \end{align*}
\end{lemma}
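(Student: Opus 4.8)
The plan is to reduce the two–argument statement to the one–argument Lemma~\ref{lemma:gaussian-approx}, which already carries the decisive constant $1/\sqrt{2\pi e}$. Writing $\varphi=\Phi'$ for the standard normal density, I would represent the difference as $\Phi(t_1+qt_2)-\Phi(t_1+t_2)=\int_{t_1+t_2}^{t_1+qt_2}\varphi(u)\,du$ and first treat $q\ge 1$, the case $q\le 1$ following identically after interchanging the two endpoints. The key observation is that, for fixed $t_2$, the map $t_1\mapsto\Phi(t_1+qt_2)-\Phi(t_1+t_2)$ has derivative $\varphi(t_1+qt_2)-\varphi(t_1+t_2)$, which is nonpositive as soon as both endpoints lie on the nonnegative half-line, since $\varphi$ is decreasing there and $t_1+qt_2\ge t_1+t_2$. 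Hence the difference is maximized at $t_1=0$, collapsing the problem to $\sup_{t_2}\lvert\Phi(qt_2)-\Phi(t_2)\rvert\le\sup_{t}\lvert\Phi(qt)-\Phi(t)\rvert$.

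Invoking Lemma~\ref{lemma:gaussian-approx} then yields $\tfrac{1}{\sqrt{2\pi e}}(\max\{q,q^{-1}\}-1)$, and I would close the estimate by noting $q-1\le\varepsilon$ and $q^{-1}-1\le(1-\varepsilon)^{-1}-1=\varepsilon/(1-\varepsilon)\le 2\varepsilon$ whenever $\varepsilon\le 1/2$, so that $\max\{q,q^{-1}\}-1\le 2\varepsilon$; this is precisely where the factor $2$ in the target bound comes from. The outer $\min\{1,\cdot\}$ is supplied by the trivial inequality $\lvert\Phi(\cdot)-\Phi(\cdot)\rvert\le 1$, which also disposes of the remaining range $\varepsilon>1/2$.

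The hard part, and the step that must be handled with care rather than glossed over, is the monotonicity reduction: it is valid only while the endpoints $t_1+t_2$ and $t_1+qt_2$ remain on one side of the origin. This is not a cosmetic restriction, because the supremum as literally written over all of $\mathbb{R}^2$ cannot obey the stated bound: for $q\ne 1$ the linear map $(t_1,t_2)\mapsto(t_1+t_2,\,t_1+qt_2)$ is a bijection of $\mathbb{R}^2$, so the pair of arguments sweeps out every $(a,b)\in\mathbb{R}^2$ and $\sup_{a,b}\lvert\Phi(b)-\Phi(a)\rvert=1$. The supremum in the lemma must therefore be read over the cone on which it is actually applied, namely $t_1,t_2\ge 0$ (equivalently, endpoints of common sign); this holds at every invocation in the proofs of Theorems~\ref{thm:ui-conservatism} and \ref{thm:bc-conservatism}, where $t_1$ is a positive multiple of $\log(1/\alpha)$ or $z_\alpha$ and $t_2$ is a nonnegative curvature term. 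Under that hypothesis the reduction above is rigorous and delivers exactly the constant $2\varepsilon/\sqrt{2\pi e}$, and I would accordingly record the nonnegativity of $t_1,t_2$ explicitly in the statement.
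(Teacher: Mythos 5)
Your central observation is correct, and it is more than a technicality: with the supremum taken over all of $\mathbb{R}^2$, the lemma as stated is false, and the paper's own proof contains exactly the error your bijection argument predicts. For fixed $t_2$, the stationary points of $t_1 \mapsto \Phi(t_1+qt_2)-\Phi(t_1+t_2)$ solve $\Phi'(t_1+qt_2)=\Phi'(t_1+t_2)$, i.e.\ $t_1+qt_2 = -(t_1+t_2)$, giving $t_1 = -(1+q)t_2/2$; the paper instead takes $t_1=+(1+q)t_2/2$, which is not a stationary point (the resulting arguments $t_2(1+3q)/2$ and $t_2(3+q)/2$ are not symmetric about the origin unless $q=\pm1$), and the subsequent algebra inherits this sign error. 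At the true stationary point the difference equals $2\Phi\bigl((q-1)t_2/2\bigr)-1$, whose supremum over $t_2\in\mathbb{R}$ is $1$ for every $q\ne1$ --- in exact agreement with your observation that $(t_1,t_2)\mapsto(t_1+t_2,\,t_1+qt_2)$ is onto $\mathbb{R}^2$. Your repair is the right one: restrict to $t_1,t_2\ge0$, use monotonicity of the normal density on $[0,\infty)$ to push $t_1$ to $0$ (the derivative in $t_1$ has a fixed sign on the cone since $t_1+qt_2$ and $t_1+t_2$ are ordered and both nonnegative, using $q\ge1-\varepsilon>0$), and then invoke Lemma~\ref{lemma:gaussian-approx}. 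You are also right that the restricted statement is all that Theorems~\ref{thm:ui-conservatism} and~\ref{thm:bc-conservatism} consume, since there $t_1$ is a positive multiple of $\log(1/\alpha)$ or $z_\alpha$, $t_2$ is a nonnegative curvature or bias term, and $\varepsilon$ is a vanishing remainder.

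One step of your write-up fails, however: the claim that the trivial bound $|\Phi(\cdot)-\Phi(\cdot)|\le1$ ``disposes of'' the range $\varepsilon\in(1/2,1)$. Since
\begin{align*}
\frac{2\varepsilon}{\sqrt{2\pi e}} < \frac{2}{\sqrt{2\pi e}} \approx 0.484 < 1 \qquad \text{for all } \varepsilon\in(0,1),
\end{align*}
the minimum in the statement always equals the second term, so the trivial bound never bridges the gap between your estimate $q^{-1}-1\le\varepsilon/(1-\varepsilon)$ and the target $2\varepsilon$. Nor can a sharper argument rescue the constant: even on the cone $t_1,t_2\ge0$ the bound $2\varepsilon/\sqrt{2\pi e}$ is false for $\varepsilon$ near $1$. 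Take $\varepsilon=0.99$, $q=0.01$, $t_1=0$, $t_2=3$: then $\Phi(3)-\Phi(0.03)\approx 0.4867$, whereas $2\varepsilon/\sqrt{2\pi e}\approx 0.4791$. Your repaired lemma must therefore carry the additional hypothesis $\varepsilon\le1/2$, or else state the bound as $\bigl(\max\{q,q^{-1}\}-1\bigr)/\sqrt{2\pi e}$. This costs nothing downstream: every invocation in the paper already assumes the relevant $\varepsilon$ is less than $1$ ``without loss of generality'' and sends it to zero, so strengthening that assumption to $\varepsilon\le1/2$ leaves Theorems~\ref{thm:ui-conservatism} and~\ref{thm:bc-conservatism} intact.
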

\begin{proof}[\bfseries{Proof of Lemma~\ref{lemma:gaussian-approx2}}]
    For two differentiable distribution functions $F$ and $G$, the Kolmogorov-Smirnov distance $\sup_{t}|F(t)-G(t)|$ is attained at a stationary point $F'(t)=G'(t)$. Let $t_2$ be a fixed constant. Then 
    \begin{align*}
        \frac{d}{ dt}\Phi(t + qt_2) &= \Phi'(t + q t_2) = \frac{1}{\sqrt{2\pi}}\exp\left(-\left(\frac{t+ qt_2}{2}\right)^2\right)\quad\text{and}\\
         \frac{d}{ dt}\Phi(t + t_2) &= \Phi'(t_1 + t_2) = \frac{1}{\sqrt{2\pi}}\exp\left(-\left(\frac{t+ t_2}{2}\right)^2\right),
    \end{align*}
    and thus the difference is maximized at $t = (t_2+qt_2)/2$. Hence,  
    \begin{align*}
        \sup_{t_2}\sup_{t_1}|\Phi(t_1 + qt_2)-\Phi(t_1 + t_2)| =\sup_{t_2}\left|\Phi\left(\frac{t_2(1+3q)}{2}\right)-\Phi\left(\frac{t_2(3+q)}{2}\right)\right|.
    \end{align*}
    Therefore, Lemma~\ref{lemma:gaussian-approx} implies 
    \begin{align*}
        \sup_{t_1, t_2}|\Phi(t_1 + qt_2)-\Phi(t_1 + t_2)| &= \sup_{t_2}\left|\Phi\left(t_2\right)-\Phi\left(t_2\frac{3+q}{1+3q}\right)\right|\\
        &\le \frac{1}{\sqrt{2\pi e}}\left(\max\left\{\frac{3+q}{1+3q},\frac{1+3q}{3+q}\right\}-1\right)\\
        &= \frac{1}{\sqrt{2\pi e}}\left(\max\left\{\frac{2(1-q)}{1+3q},\frac{2(q-1)}{3+q}\right\}\right) \le \frac{2\varepsilon}{\sqrt{2\pi e}}.
    \end{align*}

\end{proof}
\section{On the Effect of Self-centering}\label{supp:self-center}
The analyses in this section are intentionally informal, as the result is specific to misspecified linear regression. The section is purposed to provide an intuitive explanation for the empirical phenomena observed in the simulation section. For the problem defined in Section~\ref{sec:illusration}, we observe that
\begin{align*}
    &n^{-1}\sum_{i \in D_2}(Y_i - \theta_G^\top X_i)^2 - (Y_i - \widehat\theta_1^\top X_i)^2 \\
    &\qquad = -n^{-1}\sum_{i \in D_2}2(\theta_G-\widehat\theta_1)^\top \varepsilon_i X_i - (\theta_G-\widehat\theta_1)^\top \left(n^{-1}\sum_{i \in D_2}X_i X_i^\top\right) (\theta_G-\widehat\theta_1).
\end{align*}
Thus, using $D_2$ to estimate the bias term $V_G$ leads to the \emph{exact} cancellation of the second-order term. The miscoverage probability of the studentized and bias-corrected confidence set for this problem, as defined in Section~\ref{sec:illusration}, has the following equivalent form:
\begin{equation}
    \begin{aligned}
        \mathbb{P}_G(\theta_G \not\in \mathrm{CI}^{\mathrm{BC}}_{n, \alpha}) = \mathbb{P}_G\bigg(-n^{-1}\sum_{i \in D_2}2(\theta_G-\widehat\theta_1)^\top \varepsilon_i X_i > n^{-1/2}z_{\alpha} \widehat\sigma_{\theta_G, \widehat\theta_1}\bigg).
    \end{aligned}
\end{equation}
In this case, the variance is over-estimated, especially when the second-order term is not negligible, leading to a valid but conservative confidence set when $d$ is comparable to $n$. This explains why the bias-corrected confidence set remains valid in Section~\ref{sec:illusration} for large $d$ while Theorem~\ref{thm:bc-conservatism} only provides a result for $d = o(n)$.

Meanwhile, one can consider a ``three-way" data splitting scheme, where each component, $\widehat \theta_1$, $\widehat V_G$ and $\mathrm{CI}^{\mathrm{BC}}_{n, \alpha}$, is derived from independent subsets of the data (see the right panel of Figure~\ref{fig:sample-splitting-2}). Under this scheme, the miscoverage probability of the confidence set based on studentization and bias correction takes the form:
\begin{equation}
    \begin{aligned}\label{eq:bc-three-way}
        &\mathbb{P}_G(\theta_G \not\in \mathrm{CI}^{\mathrm{BC}}_{n, \alpha}) \\
        &\qquad= \mathbb{P}_G\bigg(-n^{-1}\sum_{i \in D_3}2(\theta_G-\widehat\theta_1)^\top \varepsilon_i X_i \\
&\qquad\qquad\qquad + n^{-1} (\widehat \theta_1 - \theta_G)^\top  \bigg(\sum_{i \in D_2} X_iX_i^\top- \sum_{i \in D_3} X_iX_i^\top\bigg)(\widehat \theta_1 - \theta_G) > n^{-1/2}z_{\alpha} \widehat\sigma_{\theta_G, \widehat\theta_1}\bigg).
    \end{aligned}
\end{equation}
Here, the behavior of the resulting confidence set is expected to be more sensitive to parameter dimensions. While the corresponding simulation results are omitted from the manuscript, we have empirically observed that the confidence set based on \eqref{eq:bc-three-way} becomes invalid, i.e., undercovers the nominal level, when $d$ becomes comparable to $n$. This suggests that estimating the bias term $V_G$ using the same data as the construction of the confidence set provides an implicit benefit of additional bias correction through self-centering. The precise formulation of this statement is deferred to the future work.  
\begin{figure}
\centering
\begin{subfigure}{0.9\textwidth}
  \centering
  \includegraphics[width=\linewidth]{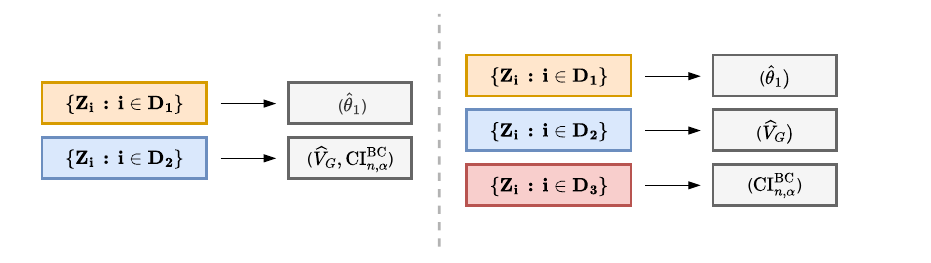}
\end{subfigure}
\caption{A schematic illustrating the sample splitting procedure. The arrow indicates that the objects are derived from the corresponding data. The left panel shows the proposed procedure, while the right panel provides the illustration used in this section}
\label{fig:sample-splitting-2}
\end{figure}

\section{Additional Illustration}\label{sup:laplace}
We provide additional results regarding the empirical coverage of four different confidence sets. For given $N \ge 2$ and $d$, we now generate the observations from the model:
\begin{align}\label{eq:exp-model2}
    Y_i = \theta_G^\top X_i + \epsilon_i  \quad\text{where}\quad X_i \sim N(0_d, I_d) \quad \text{and}\quad \epsilon_i \sim \mathrm{Lap}(0,1),
\end{align}
where $\mathrm{Lap}(0,1)$ denotes the Laplace distribution with the location parameter 0 and the scale parameter 1. The confidence sets are defined as in \eqref{eq:ui-def-model}, \eqref{eq:ui-def-incorrect}, \eqref{eq:std-ui-lr}, and \eqref{eq:bc-ui-lr}. In this case, both instances of universal inference are incorrectly specified since all methods mistakenly assume the likelihood follows a normal distribution.

Figure~\ref{fig:coverage2} displays the empirical coverage of the 95\% confidence sets, computed from 1000 replications. The four methods under consideration are labeled as follows: \emph{Universal Inference 1} corresponds to the confidence set in \eqref{eq:ui-def-model}, \emph{Universal Inference 2} to the confidence set in \eqref{eq:ui-def-incorrect}, and the remaining methods are labeled as before. As in Figure~\ref{fig:coverage}, the nominal level of $0.95$ is indicated by a dashed line. Additionally, we provide the theoretical coverage level $\Phi(\sqrt{2\log(1/\alpha)})$ with $\alpha=0.05$, shown as a dash-dotted line in the left panel.

The left panel of Figure~\ref{fig:coverage2} displays the empirical coverage for each method when $d=5$ and $10 \le N \le 5000$. We observe that \emph{Studentized} and \emph{Studentized + Bias-corrected} exhibit similar performance to the results in Figure~\ref{fig:coverage}, despite the clear model misspecification. On the other hand, \emph{Universal Inference 1} now achieves coverage closer to the $1-\alpha$ level. However, it is incorrect to interpret it as \emph{Universal Inference 1} performing better. This result should be seen as a cautionary tale, as it is possible to construct a data-generating distribution where universal inference-based methods perform nearly at the exact $1-\alpha$ level, even though no such implication should be drawn from the model. The right panel of Figure~\ref{fig:coverage2} displays the coverage for each method when $N=500$ and $2 \le d \le 250$ (with $n = |D_2| = 250$). The interpretation of these results is largely consistent with the analysis presented in the main text.

\begin{figure}
\centering
\begin{subfigure}{0.9\textwidth}
  \centering
  \includegraphics[width=\linewidth]{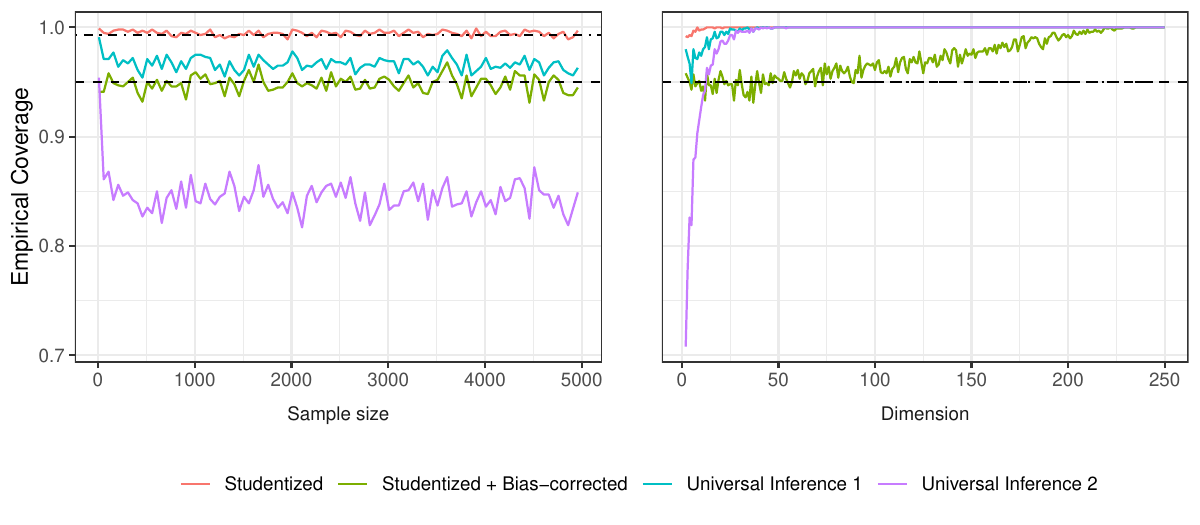}
  \label{fig:sub2}
\end{subfigure}
\caption{Comparison of the empirical coverage of $95\%$ confidence sets for fixed dimension with varying sample size (left panel) and fixed sample size with varying dimension (right panel). The empirical coverage is computed from 1000 replications. The methods are labeled as: \emph{Universal Inference 1} based on \eqref{eq:ui-def-model}, \emph{Universal Inference 2} based on \eqref{eq:ui-def-incorrect}, \emph{Studentized} based on 
 \eqref{eq:std-ui-lr}, and \emph{Studentized + Bias-corrected} based on \eqref{eq:bc-ui-lr}. It is incorrect to interpret that \emph{Universal Inference 1} performs better, as its coverage now approaches the nominal level. This merely reflects the fact that the validity of universal inference under model misspecification can be arbitrary, ranging from conservative, exact to invalid. While conservative, the validity of the \emph{Studentized} method remains robust under model misspecification, demonstrating the benefit of studentization even when bias correction is infeasible.}
\label{fig:coverage2}
\end{figure}
\end{document}